\newtheorem{thm}{Theorem}[section]
\newtheorem{lemma}[thm]{Lemma}
\newtheorem{proposition}[thm]{Proposition}
\theoremstyle{definition}
\newtheorem{remark}{Remark}[section]
\newtheorem{asu}{Assumption}[section]
\newcommand{\scr}[1]{\mathscr #1}
\definecolor{wco}{rgb}{0.5,0.2,0.3}
\numberwithin{equation}{section}
\def\1{{\mathbf 1}}
\def\R{{\mathbf R}}
\def\F{{\mathcal F}}
\def\I{{\mathcal I}}
 \newcommand{\Span}{{\mathrm span}}
\def\E{{{\bf E}}}
\def\p{{{\bf P}}}
\def\loc{\text{\rm{loc}}}
\def \lloc{{L_{loc}}}
 \def\D{\scr D}
\def\e{\text{\rm{e}}}   
\def\paral{/\kern-0.55ex/} 
\def\parals_#1{/\kern-0.55ex/_{\!#1}} 
\def\Ric{\mathop{\rm Ric}}
\def\le{{\leqslant}}
\def\ge{{\geqslant}}
\def\<{{\langle}}
\def\>{{\rangle}}
\def\epsilon{{\varepsilon}}
\begin{document}

\title{An approximation scheme for  SDEs with non-smooth coefficients}
\author{Xin Chen and Xue-Mei Li\footnote{
Research supported by  EPSRC grant (EP/E058124/1).
 }}

\date{}
\maketitle

\begin{abstract}
Elliptic stochastic differential equations (SDE) make sense when the coefficients are 
only continuous. We study  the corresponding linearized SDE 
whose coefficients are not assumed to be locally bounded.  This leads to existence of $W_{\loc}^{1,p}$ solution flows for elliptic SDEs with H\"older continuous and $\cap_{p} W_{\loc}^{1,p}$ coefficients. 
Furthermore an approximation scheme is studied from which we obtain a representation for the derivative of the Markov semigroup, 
and an integration by parts formula. 
\end{abstract}

\section{Introduction}
Let $A_l, 1 \le l \le m$, be   continuous vector
fields on $\R^n$.
We consider  stochastic differential equations of Markovian type
\begin{equation} \label{e1}
d\xi_t=\sum_{l=1}^{m}A_l(\xi_t)dW_t^l+A_0(\xi_t)dt
\end{equation}
where $(W_t^l, 1\le l\le m )$ are independent Brownian motions. Denote by $A_{il}$ the components of $A_l$ hence $A_l=(A_{1l}, A_{2l}, \dots,A_{nl})^T$. Write  $A=(A_1,\dots, A_m)$ for the $n\times m$ matrix with the induced $n\times n$-matrix $A^{\ast}A$ whose entries are
 $a_{ij}(x)=\sum_{l=1}^m A_{il}(x)A_{jl}(x)$.
For $x\in \R^n$ let $\xi_t(x)$ be a solution to the SDE (\ref{e1}) with initial value $x$. If the function $A_l$ are weakly differentiable there is formally the linearized SDE, 
\begin{equation}\label{e4}
V_t(x)= \mathbf{I}+\sum_{l=1}^{m}\int_0^t DA_l(\xi_s(x))(V_s(x))dW_s^l+ \int_0^tDA_0(\xi_s(x))(V_s(x))ds
\end{equation}
whose solution is a $n\times n $ matrix valued random function. Here $DA_l:\R^n\rightarrow L(\R^n, \R^n)$ denotes the function $DA_l(x)=(DA_{1l}(x),...,DA_{nl}(x))$.
 In the case of  the vector fields $A_l$ being smooth and  when  a global smooth solution flow to SDE (\ref{e1}) exists, the solution to (\ref{e4}) corresponds to the derivative of the solution (\ref{e1}) with respect to initial data.  Here in Section 2 of the paper we do not assume local boundedness of $DA_l$. We state below our two basic sets of assumptions, which are used to show a key convergence theorem. 
\begin{asu}\label{N1}
\begin{enumerate}
\item [(1)] $A(x)$ is uniformly elliptic, for some $\theta>0$,
\begin{equation}\label{c1}
\sum_{i,j=1}^n a_{ij}(x)\xi_i \xi_j\geqslant \theta |\xi|^2, 
\quad  \forall  x \in \R^n,\ \xi=(\xi_1,...,\xi_n)\in \R^n.
\end{equation}
\item[(2)]  Each $A_l(x), 0\leqslant l \leqslant m$ is uniformly H\"older continuous , i.e. for some positive
$K$ and $0<\alpha<1$, 
\begin{equation*}
|A_l(x)-A_l(y)|\leqslant K|x-y|^{\alpha}\  \ \ \forall  x,y \in \R^n 
\end{equation*}
and 
$\sup_{x \in \R^n}|A_l(x)|\leqslant M$ for some $M>0$.

\item [(3)] 
$A_{il} \in W_{\loc}^{1,2n}(\R^n)$.
\end{enumerate}\end{asu}

 Assumption \ref{N1} is essential for existence and uniqueness of the strong
solution of SDE (\ref{e1}), as well as the convergence of our approximating scheme. By Stroock-Varadhan theorem conditions (1) and (2), drawing from parabolic PDE theory on regularity of solutions,  assure the existence of a weak solution to SDE (\ref{e1}).
And the uniform H\"older continuity in (2) is crucial to derive some upper bound for the fundamental solution of 
parabolic PDE. Part (3) is the basic assumption, for the existence of a strong solution and pathwsie uniqueness of the elliptic SDE, in Veretennikov \cite{Ve}.  In fact in Watanabe-Yamada's celebrated paper \cite{Watanabe-Yamada71}, it was shown that pathwise uniqueness  holds for non-Lipschitz vector fields with regularity of the form 
$|A_l(x)-A_l(y)|\leqslant \rho_1(|x-y|), 1\leqslant l \leqslant m$, $|A_0(x)-A_0(y)|\leqslant \rho_2(|x-y|)$, for e.g. $\rho_1(t)=t\sqrt{|\log t|}$, $\rho_2(t)=t |\log t|$ when $t$ is small, essentially the same regularity required for the uniqueness of a  deterministic differential equation. See also a recent work by Fang-Zhang  \cite{FZ} for latest progress. When the SDE is uniformly elliptic this condition weakens as in the work of Veretennikov  \cite{Ve}. 
About the existence  and uniqueness of the strong solution of SDE (\ref{e1}), see also the work of Krylov-R\"okner \cite{KR} who discussed SDEs with additive noise with drift in $L^p$ and Flandoli-Gubinelli-Priola \cite{FGP} for SDE with $C_b^3$ diffusion coefficients and a
`locally uniformly $\alpha$-H\"older continuous' condition on the drift coefficient. See also Zvonkin and \cite{ZV} and Zhang \cite{Zh}.

Here in this paper we  take advantage of the elliptic system at the level of the derivative flow. 
 
 \vskip 5pt

{\bf Condition $G(\sigma, T_0)$. }
Let $G(x):=\sum_{l=0}^m |DA_l(x)|^2$.
There exist $\sigma >0$ and $T_0>0$ such that for all bounded set $S$\begin{equation*}
\sup_{x \in S}\int_0^{T_0}\int_{\R^n}\e^{\sigma G(y)}K_s(x,y)dyds<\infty,
\end{equation*}
where $K_s(x,y)$ is the heat kernel on $\R^n$.
\vskip 10pt

For example if $G \in L^1_{\loc}(\R^n)$, $G(x)\leqslant C|x|^2$ for $x$ outside of a compact set, this condition holds on the time interval $[0,T]$ if  ${\sigma T}$ is sufficiently small. If the weak derivatives $DA_l(x)$ grows sub-linearly
 the integrability holds for all parameters. 

Consider an one-dimensional example ($n=1$), with $l=1$, $A_1(x)=1+\int_0^{x}\Big(\sqrt{\beta \big|\log |y|\big|}I_{|y|\leqslant 1}\Big)dy$
where $\beta>0$ is a positive constant, and $A_0(x)=0$. 
The SDE (\ref{e1}) with above coefficients satisfy 
Assumption \ref{N1} and Condition $G(\sigma, T_0)$ for all $0<\sigma<\frac{1}{\beta}$ and $T_0$. 
In fact, it is obvious that Assumption \ref{e1} holds and note that for any $T_0>0$, 
\begin{equation*}
\begin{split}
&\int_0^{T_0}\int_{\R^1}\e^{\sigma G(y)}K_s(x,y)dyds\leqslant
\int_0^{T_0}s^{-\frac{1}{2}}\Big(\int_{\R^1}\big(\frac{1}{|y|^{\sigma \beta}}
I_{|y|\leqslant 1}+ 1 I_{|y|>1}\big)\e^{-\frac{|x-y|^2}{2T_0}}dy \Big)ds
\end{split} 
\end{equation*}
so Condition $G(\sigma, T_0)$ holds if $\sigma<\frac{1}{\beta}$. 
 

 The first result we state here is on the construction of a solution to the derivative SDE (\ref{e4}). 
We show that there is a regularising family of elliptic SDEs with parameter $\varepsilon$ such that the derivative flows $V_t^\varepsilon$ converge under a small time interval when $\varepsilon$ tends to $0$ , and the limit process is the unique 
solution of SDE (\ref{e4}) on this time interval. And from that we  construct 
a solution of SDE (\ref{e4}) in any time interval. In particular, the derivative in  
SDE (\ref{e4}) is the weak derivative. Under these conditions $A_l$ are 
not regular enough for us to obtain the required bounds directly  we employ the upper bound of the Markov kernel and the  integrable condition of $DA_l$ to estimate the moments of the derivative process.

\vskip 10pt {\bf Theorem \ref{Th2}. }
Under  Assumption \ref{N1}  and condition  $G(\sigma, T_0)$, 
 there exists a process $V_t(x), 0\leqslant t < \infty$, such that 
for each  $p>0$, there is a $T_4>0$ as in Lemma \ref{Lem N3}, 
\begin{equation*}
\lim_{\varepsilon\rightarrow 0}\sup_{x \in S}
\E\sup_{0\leqslant s \leqslant T_4}|V_s^{\varepsilon}(x)-V_s(x)|^p=0 
\end{equation*}
holds for any bounded set $S$ in $\R^n$. 
Furthermore, the process $V_t(x)$ is the unique strong solution
of SDE (\ref{e4}).


\vskip 6pt

In the case of locally Lipschitz coefficients the result to compare with is that of Blagovescenskii-Friedlin  \cite{BF} , which goes back to 1961, where it is stated that if the 
coefficients are globally Lipschitz continuous, there exists a version of the solution which is jointly continuous in time and  space.  This result has been strengthened in terms of the growth on the derivative of the vector fields if all the vector fields are differentiable. See e.g.  Li \cite{Li1}, Fang \cite{Fang} for the cases about SDEs with locally Lipschitz continuous 
coefficients. See also Zhang \cite{Zh1} for the case
in which the coefficients are not Lipschitz continuous. 
In a recent work \cite{FGP}, Flandoli-Gubinelli-Priola study the case where diffusion coefficients are $C_b^3$, drift coefficients  are locally uniformly $\alpha$- H\"older continuous and obtain the existence
of a version of the solution which is $C^1$ with the space variable and a Bismut type formula. 


As for the continuous flow property, let  $\xi_t(x,\omega), 0\le t<\zeta(x, \omega)$ be its maximal solution starting from $x$ 
and  $\zeta(x)$ is the explosion time which we assume to be $\infty$ a.s. for each fixed $x \in \R^n$.  It is indicated that if $V_t(x)$ is a version of $D_x\xi_t(x)$, the derivative of $\xi_t(-, \omega)$ at point $x$, moment bounds on $V_t(x)$ relates to both completeness and strong completeness \cite{Li1,Li2}. For example non-explosion from particular starting point and the condition that $\sup_{x\in S}\E\sup_{s\leqslant t}  |V_s(x)|^p \chi_{t<\zeta(x)}$ for all  bounded set $S$ and some $p>n$ implies completeness from all initial points and the strong completeness , i.e. the existence 
of  a version of the solution which is jointly continuous in time and  space.

  The essential analysis on SDE's whose coefficients  are locally Lipschitz continuous are gathered in section  \ref{lip}.  Suppose that  Assumption \ref{J1} in  Section \ref{lip} holds and $A_l$ are elliptic there is a  smooth approximation for the  derivative process and the uniform convergence holds on any time interval. In this case a sequence of Lipschitz continuous cut-off functions are employed to 
approximate a Locally Lipschitz continuous system and we can remove the boundedness 
conditions on $A_l$ and $DA_l$.  

From this, for the SDE whose coefficients satisfy the conditions above, we obtain a representation for the derivative of the Markov semigroup associated with the SDE, the intertwining property of the differential $d$ and the semigroup $P_t$. 
Another application is that it can be shown that  under suitable conditions there is a continuous version of the solution, which is furthermore weakly differentiable and belongs to the Sobolev space $W_{loc}^{1,p}$ for some $p$ in small time interval. We also prove  
an extrinsic integration by parts formula on path space.    

\vskip 10pt

 Standard investigation with regularity of stochastic flows assumes a local Lipschitz condition. The following result compliments known results, see e.g. Kunita \cite{KU}. We say that an SDE 
has a global continuous solution flow 
is strong complete if  for each starting point $x$ there is a global solution $(\xi_t(x), t\ge 0)$ and that there is a modification with $(t,x)\in [0,\infty)\times \R^n \to \xi_{\cdot}(\cdot,\omega)\in \R^n$  continuous almost surely.
 
\vskip 4pt 

{\bf Theorem \ref{th:regularity}.}
Under Assumption \ref{N1} and Condition $G(\sigma, T_0)$,  
 SDE (\ref{e1}) has a global continuous solution flow.  Furthermore for each $p>0$ there is a constant $T_5$, 
such that 
$\xi_t(\cdot, \omega) \in W^{1,p}_{\loc}(\R^n)$ for each $t\in [0,T_5]$.

\vskip 6pt

{\bf {Theorem \ref{P1}}}
Suppose the Assumption \ref{N1} and condition $G(\sigma, T_0)$  hold, 
then there is a constant $T_{6}$, 
 such that
for each  $t \in [0,T_6]$,
\begin{equation*}
d P_t f(x)(v_0)
=\frac{1}{t}\E \big[f(\xi_t(x))\int_{0}^{t}
\langle Y(\xi_s(x))(V_s(x,v_0)), dW_s\rangle_{\R^m} \big], \ \ v_0\in \R^n
\end{equation*} 
holds for all $f$ in $ {\mathcal B}_b(\R^n)$. If moreover 
$f \in C_b^1(\R^n)$, for all $v_0\in \R^n$ and $t\in [0, T_6]$,
 $$d(P_tf)(x)(v_0)=\E df(V_t(x,v_0)).$$

\vskip 10pt

Finally we have the following integration by parts formula . 
Let $C_{x}([0,t];\R^n)$ be the space of  continuous functions from $[0,t]$ to $\R^n$ with
initial value $x$.


\vskip 5pt

{\bf Theorem \ref{ibp}. } 
Assume Assumption \ref{N1} and  Condition $G(\sigma, T_0)$. There is a
positive constant $T_{8}$, such that for any  $t\in (0, T_{8}]$  the following integration by parts formula 
holds for every
$BC^1$ function $F$ on $C_{x}([0,t];\R^n)$.
Let  $h:[0,t]\times\Omega\rightarrow \R^n$ be an adapted stochastic process with $h(\cdot, \omega)\in L_0^{2,1}([0,t]; \R^m)$ almost surely and  $\E(\int_0^{t}
|\dot{h}_s|^2ds)^{\frac{1+\beta}{2}}<\infty$ for some $\beta>0$. Then
$$\E dF(V^h(\xi_{\cdot}))=\E F(\xi_{\cdot}(x))\delta V_{t}^h(\xi_{\cdot}) $$
for $\delta V^h$  defined by (\ref{divergence}).

\vskip 10pt

Finally in the Appendix we analyse the non-smooth geometry induced by the SDE in terms of an approximation linear connection when $\R^n$ is treated as a manifold.  We overcome the difficulty that the limiting connection may not be torsion skew symmetric with respect to the relevant induced Riemannian metric
and  obtain an intrinsic integration by part formula. 

\section{An approximation scheme for the derivative flow}
\label{section2}

In this section we consider  a family of   SDEs whose coefficients are smooth,
\begin{equation} \label{e2-1}
d\xi_t^{\varepsilon}(x)=\sum_{l=1}^{m} A_l^{\varepsilon}(\xi_s^{\varepsilon}(x))dW_s^l
+A_0^{\varepsilon}(\xi_s^{\varepsilon}(x))ds
\end{equation}
with the property that if the solutions $\xi_t^{\varepsilon}(x)$ and $\xi_t(x)$ are the solution of SDE (\ref{e2-1}) and (\ref{e1}) respectively with the same starting point $x$, there is a convergence theorem.

Let $\eta:\R^n\rightarrow R$ be the smooth mollifier defined by 
$\eta(x)=C\e^{\frac{1}{|x|^2-1}}\1_{ |x|<1}$
   where $C$ is a normalising constant such that $|\eta|_{L^1}=1$. 
Define a sequence of smooth functions $\eta_\varepsilon$ with support in the ball $B_\varepsilon$, of radius $\varepsilon$ centred at $0$ by  
$\eta_{\varepsilon}(x)=\varepsilon^{-n}\eta(\frac{x}{\varepsilon})$.
 For a locally integrable function $f$ on $\R^n$ let $f_{\varepsilon}$ be its convolution  with $\eta_{\varepsilon}$, 
\begin{equation}\label{e0}
\begin{split}
f_{\varepsilon}(x)=\eta_{\varepsilon}\ast f(x)=\int_{\R^n}\eta_{\varepsilon}(x-y)f(y)dy
=\int_{B_{\varepsilon}(0)}\eta_{\varepsilon}(y)f(x-y)dy 
\end{split}
\end{equation}
Then $f_\varepsilon\to f$ for almost surely all $x$ and the approximation family are uniformly Lipschitz continuous if the original functions are and have uniform linear growth if the original function does. To summarise, we have the following lemma. (see 
\cite{Evans})

 \begin{lemma}\label{lemma1}
\begin{enumerate}
\item[(1)]  If $f:\R^n\rightarrow R$ such that $|f(x)|\leqslant \psi(|x|)$ for $\psi$ a positive  increasing function, then
 $|f_{\varepsilon}(x)|\leqslant \psi (|x|+\varepsilon)$. 
If $f$ is uniformly H\"older continuous in $\R^n$ i.e. 
$|f(x)-f(y)|\leqslant K|x-y|^{\alpha}$ for some $K>0,\ 0<\alpha<1$, we have 
$$\sup_{\varepsilon}|f_{\varepsilon}(x)-f_{\varepsilon}(y)|\leqslant K|x-y|^{\alpha} 
\ \ \ \forall x,y \in \R^n$$
and 
$$\lim_{\varepsilon\rightarrow 0}\sup_{x \in \R^n}|f_{\varepsilon}(x)-f(x)|=0 $$
If $f$ is locally Lipschitz with rate function $K$ then  so is each $f_\varepsilon$ with the same rate function,
that is $|f_{\varepsilon}(x)-f_{\varepsilon}(y)|\leqslant  K(n+\varepsilon) |x-y| $, for all  $x,y\in B_n$.
\item[(2)]
 If $f \in L^p_{\loc}(\R^n)$, then for any $R>0$, we have
\begin{equation*}
\int_{|x|\leqslant R} f_{\varepsilon}^pdx\leqslant 
\int_{|x|\leqslant R+\varepsilon} f^pdx
\end{equation*}
$$\lim_{\varepsilon\rightarrow 0}\int_{|x|\leqslant R}|f_{\varepsilon}-f|^p dx=0 $$
\end{enumerate}
\end{lemma}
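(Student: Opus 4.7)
The plan is to prove each assertion by direct manipulation of the defining convolution $f_\varepsilon(x) = \int_{B_\varepsilon(0)} \eta_\varepsilon(y) f(x-y)\, dy$, exploiting that $\eta_\varepsilon \ge 0$, $\int \eta_\varepsilon = 1$, and $\eta_\varepsilon$ is supported in $B_\varepsilon(0)$.

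For the first bound in (1), I would note that if $|y|<\varepsilon$ then $|x-y|\le |x|+\varepsilon$, and since $\psi$ is increasing, $|f(x-y)| \le \psi(|x|+\varepsilon)$. Pulling this uniform bound out of the integral and using $\int \eta_\varepsilon = 1$ gives $|f_\varepsilon(x)| \le \psi(|x|+\varepsilon)$. For the uniform Hölder bound, write
\begin{equation*}
f_\varepsilon(x)-f_\varepsilon(y) = \int_{B_\varepsilon(0)} \eta_\varepsilon(z)\bigl(f(x-z)-f(y-z)\bigr)\, dz,
\end{equation*}
apply the triangle inequality and the hypothesis $|f(x-z)-f(y-z)|\le K|x-y|^\alpha$, then factor $K|x-y|^\alpha$ out. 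For uniform convergence, the same device with the change of variable $z \mapsto \varepsilon z$ gives $|f_\varepsilon(x)-f(x)| \le \int \eta_\varepsilon(z)|f(x-z)-f(x)|\, dz \le K\varepsilon^\alpha$, which tends to $0$ uniformly in $x$. For the local Lipschitz claim, if $x,y\in B_n$ and $|z|<\varepsilon$ then $x-z,y-z \in B_{n+\varepsilon}$, so $|f(x-z)-f(y-z)|\le K(n+\varepsilon)|x-y|$, and integrating against $\eta_\varepsilon$ yields the same rate.

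For (2), the $L^p$ bound is Jensen's inequality: since $\eta_\varepsilon \ge 0$ with unit mass,
\begin{equation*}
|f_\varepsilon(x)|^p \le \int_{B_\varepsilon(0)} \eta_\varepsilon(z)\,|f(x-z)|^p\, dz.
\end{equation*}
Integrating over $\{|x|\le R\}$, exchanging order of integration by Tonelli, and using that the inner integral shifts support into $\{|x|\le R+\varepsilon\}$, gives the stated $L^p$ estimate. The $L^p_{\loc}$ convergence is the classical statement that convolution by a mollifier approximates the identity on $L^p$: I would invoke density of continuous compactly supported functions in $L^p(B_{R+1})$, combined with the just-proved $L^p$ bound to control the mollifier on the dense subset uniformly in $\varepsilon$, and a standard $\varepsilon/3$ argument to pass to general $f$.

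None of these steps involves a substantive obstacle; the only mild technicality is ensuring the Hölder uniform convergence uses the scaling of $\eta_\varepsilon$ correctly, and that the $L^p_{\loc}$ convergence uses an enlarged ball $B_{R+\varepsilon}$ so that boundary effects near $\partial B_R$ do not spoil the approximation argument. Since the lemma is a packaging of standard mollifier estimates (cf.\ \cite{Evans}), the proof is essentially bookkeeping once the convolution representation is in hand.
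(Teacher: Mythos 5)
Your proof is correct and is precisely the standard mollifier argument that the paper itself defers to by citing \cite{Evans} without giving a proof. The only implicit point worth noting is that the Jensen step in (2) uses $p\geqslant 1$, which is the intended range here.
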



\subsection{Basic Estimates}
Unless otherwise stated we take  $A_{li}^{\varepsilon}=\eta_{\varepsilon}\ast A_{li}$, 
$A_l^{\varepsilon}=(A_{1l}^{\varepsilon}, \dots ,A_{nl}^{\varepsilon})^T$. Let  
 $A^\varepsilon=(A_1^\varepsilon, \dots, A_m^\varepsilon)$ 
 and $a_{ij}^\varepsilon=\sum_{l=1}^m A_{il}^\varepsilon(x)A_{jl}^\varepsilon(x)$.
We begin with  a family of  approximating  SDEs, with the smooth coefficients $A_l^{\varepsilon}$ defined 
as above,
\begin{equation} \label{e2}
d\xi_t^{\varepsilon}(x)=x+\sum_{l=1}^{m}\int_0^t A_l^{\varepsilon}(\xi_s^{\varepsilon}(x))dW_s^l
+\int_0^tA_0^{\varepsilon}(\xi_s^{\varepsilon}(x))ds
\end{equation}
 We summarise below useful property of this approximation.
\begin{lemma}
\label{lemma2} 
Suppose Assumption 2.1 holds, then for some $\varepsilon_0>0$,  $\{a_{ij}^\varepsilon, \varepsilon<\varepsilon_0\}$ are elliptic with the same uniform elliptic constant.
\end{lemma}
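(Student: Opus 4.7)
The plan is to transfer uniform ellipticity from $a_{ij}$ to $a_{ij}^\varepsilon$ by showing that the matrices $a_{ij}^\varepsilon$ converge to $a_{ij}$ \emph{uniformly} in $x$ as $\varepsilon\to 0$. The key point to notice at the outset is that $a_{ij}^\varepsilon$ is \emph{not} simply the mollification of $a_{ij}$: one first mollifies each $A_{il}$ and then takes the quadratic combination, so the convolution and product do not commute. This is why one genuinely needs a uniform closeness argument, and why the uniform H\"older hypothesis (Assumption \ref{N1}(2)) is what does the work here, rather than the Sobolev regularity in (3).

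First I would invoke Lemma \ref{lemma1}(1). Since each $A_{il}$ is uniformly H\"older continuous on $\R^n$ with exponent $\alpha$ and constant $K$, the standard estimate
\begin{equation*}
|A_{il}^\varepsilon(x)-A_{il}(x)|\le \int_{B_\varepsilon(0)}\eta_\varepsilon(y)|A_{il}(x-y)-A_{il}(x)|dy\le K\varepsilon^\alpha
\end{equation*}
gives $\sup_{x\in\R^n}|A_l^\varepsilon(x)-A_l(x)|\le C\varepsilon^\alpha\to 0$, and in particular $\sup_\varepsilon\sup_x|A_l^\varepsilon(x)|\le M$ by Assumption \ref{N1}(2).

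Next I would expand the difference via the telescoping identity
\begin{equation*}
a_{ij}^\varepsilon(x)-a_{ij}(x)=\sum_{l=1}^m\bigl[(A_{il}^\varepsilon(x)-A_{il}(x))A_{jl}^\varepsilon(x)+A_{il}(x)(A_{jl}^\varepsilon(x)-A_{jl}(x))\bigr],
\end{equation*}
so that the uniform bounds of the previous paragraph yield $\sup_{x\in\R^n}|a_{ij}^\varepsilon(x)-a_{ij}(x)|\le C' M\varepsilon^\alpha$ for every $i,j$. Then for any unit vector $\xi\in\R^n$,
\begin{equation*}
\sum_{i,j}a_{ij}^\varepsilon(x)\xi_i\xi_j\ge \sum_{i,j}a_{ij}(x)\xi_i\xi_j-n^2\max_{i,j}\sup_x|a_{ij}^\varepsilon(x)-a_{ij}(x)|\ge \theta-C''\varepsilon^\alpha.
\end{equation*}
Choosing $\varepsilon_0$ so that $C''\varepsilon_0^\alpha\le\theta/2$ gives uniform ellipticity with constant $\theta/2$ for every $\varepsilon<\varepsilon_0$, which is the desired common uniform elliptic constant.

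There is no real obstacle in this argument; the only thing to be careful about is the non-commutation of mollification with the quadratic combination $A^*A$, which forces one to use the uniform H\"older modulus rather than the weaker almost-everywhere convergence provided by part (2) of Lemma \ref{lemma1}. The uniform boundedness of $A_l$ in Assumption \ref{N1}(2) and the corresponding uniform bound for $A_l^\varepsilon$ are essential to control the cross term $A_{il}(A_{jl}^\varepsilon-A_{jl})$.
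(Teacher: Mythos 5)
Your proof is correct and follows essentially the same route as the paper: uniform convergence of $A_{il}^\varepsilon$ to $A_{il}$ (from the uniform H\"older modulus via Lemma \ref{lemma1}) plus uniform boundedness gives $\sup_x|a_{ij}^\varepsilon(x)-a_{ij}(x)|\to 0$, and then the perturbation of the quadratic form yields ellipticity with constant $\theta/2$ for $\varepsilon<\varepsilon_0$. You merely make explicit the telescoping step for the product $A^*A$ that the paper leaves implicit.
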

\begin{proof}
By Lemma \ref{lemma1}, 
$A^{\varepsilon}_{il}(x)$ are uniformly bounded in all parameters and 
the family of functions $A^{\varepsilon}_{il}(x)$ converge uniformly in $\R^n$ as $\varepsilon$
tends to 0.
Let  $\varepsilon_0$ be such that $\varepsilon< \varepsilon_0$, 
$\sup_{x \in \R^n}|a_{ij}^{\varepsilon}(x)-a_{ij}(x)|\leqslant \frac{\theta}{2n}$.
Hence for $x\in \R^n, \xi=(\xi_1,\xi_2...,\xi_n) \in \R^n$,
\begin{equation*}
\begin{split}
\sum_{i,j=1}^{n}a_{ij}^{\varepsilon}(x)\xi_i \xi_j &\geqslant
\sum_{i,j=1}^{n}a_{ij}(x)\xi_i \xi_j-\sum_{i,j=1}^{n}|a_{ij}^{\varepsilon}(x)-a_{ij}(x)||\xi_i|| \xi_j|\\
&\geqslant\theta \sum_{i=1}^n |\xi_i|^2-\frac{\theta}{2n}n\sum_{i=1}^n |\xi_i|^2
=\frac{\theta}{2} \sum_{i=1}^n |\xi_i|^2
\end{split}
\end{equation*}
\end{proof}

 From Theorem A in \cite{Kan}, we obtain
the following results on the approximation of $\xi_t^{\varepsilon}(x)$ and $\xi_t(x)$.


\begin{lemma}\label{N2}
If assumption \ref{N1} holds, for any $p>0,R>0,T>0$, 
\begin{equation}\label{N35}
\lim_{\varepsilon \rightarrow 0} \sup_{|x|\leqslant R}
\E\big[\sup_{0\leqslant s \leqslant T}|\xi_s^{\varepsilon}(x)-\xi_s(x)|^p\big]=0,
\end{equation}
\end{lemma}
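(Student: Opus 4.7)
The plan is to reduce this to the existing strong $L^p$-approximation theorem for uniformly elliptic SDEs with bounded H\"older coefficients (Theorem A in \cite{Kan}), by verifying that the mollified coefficients $A_l^\varepsilon$ satisfy its hypotheses uniformly in $\varepsilon$ and uniformly in the starting point $x$ over compact sets.

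First I would collect the necessary uniform estimates on the regularising family. Assumption \ref{N1}(2) together with Lemma \ref{lemma1}(1) yields, for every $\varepsilon>0$,
\begin{equation*}
\sup_\varepsilon |A_l^\varepsilon(x) - A_l^\varepsilon(y)| \le K|x-y|^\alpha, \qquad \sup_\varepsilon \sup_x |A_l^\varepsilon(x)| \le M,
\end{equation*}
together with the uniform convergence $\sup_x |A_l^\varepsilon(x) - A_l(x)| \to 0$ as $\varepsilon \to 0$. Lemma \ref{lemma2} supplies the uniform ellipticity of $a^\varepsilon$ with constant $\theta/2$ for all $\varepsilon < \varepsilon_0$. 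Hence both SDE (\ref{e2}) and SDE (\ref{e1}) sit inside the class of uniformly elliptic, uniformly bounded, uniformly H\"older continuous coefficients, with the approximating coefficients converging uniformly on $\R^n$ to the limit. These are precisely the quantitative hypotheses required by Theorem A of \cite{Kan}, which then produces an estimate of the form
\begin{equation*}
\E \sup_{0 \le s \le T} |\xi_s^\varepsilon(x) - \xi_s(x)|^p \le C(T,p,K,\alpha,M,\theta)\,\omega(\varepsilon),
\end{equation*}
where the modulus $\omega(\varepsilon) \to 0$ depends only on $\sup_x |A_l^\varepsilon(x) - A_l(x)|$ and $\sup_x |a_{ij}^\varepsilon(x) - a_{ij}(x)|$.

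The one delicate point is the uniformity in the starting point $x\in\{|x|\le R\}$. This is not an extra hypothesis in \cite{Kan}, because all constants entering its bound depend only on the uniform norms of the coefficients and on $T,p$, not on the initial data; in particular the underlying argument, which combines Aronson-type bounds on the parabolic fundamental solution with a Girsanov removal of the drift, yields constants that are translation invariant in $x$. Taking the supremum over $\{|x|\le R\}$ and letting $\varepsilon \to 0$ therefore delivers the claim. The main (and essentially only) obstacle is this verification that the constants in \cite{Kan} are genuinely $x$-independent, which is a matter of inspecting its proof rather than producing a new estimate.
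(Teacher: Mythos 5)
Your overall strategy — reduce to the Kaneko–Nakao approximation theorem by verifying that the mollified coefficients inherit uniform ellipticity, uniform boundedness, and uniform H\"older continuity from Lemma \ref{lemma1} and Lemma \ref{lemma2}, and then observe that the resulting constants are $x$-independent so the supremum over $\{|x|\le R\}$ comes for free — is exactly the paper's opening move. However, there is a concrete gap: you treat Theorem A of \cite{Kan} as if it delivers an $L^p$ estimate for arbitrary $p>0$. It does not; it is an $L^2$ result, and the paper invokes it only to obtain (\ref{N35}) for $p=2$. The extension to general $p$ requires an additional argument that your proposal omits entirely.

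In the paper, $p<2$ is handled by H\"older's inequality applied to the $p=2$ case, and $p>2$ is handled by Cauchy–Schwarz in the form
\begin{equation*}
\E\sup_{s\le T}|\xi_s^{\varepsilon}(x)-\xi_s(x)|^p
\leqslant
\Big(\E\sup_{s\le T}|\xi_s^{\varepsilon}(x)-\xi_s(x)|^2\Big)^{1/2}
\Big(\E\sup_{s\le T}|\xi_s^{\varepsilon}(x)-\xi_s(x)|^{2p-2}\Big)^{1/2},
\end{equation*}
where the first factor tends to zero by the $p=2$ case and the second factor is controlled uniformly in $\varepsilon$ and in $x\in S$ via the a priori moment bounds (\ref{N37}), which in turn rely on the uniform boundedness of the coefficients (Assumption \ref{N1}(2) and Lemma \ref{lemma1}). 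Without this bootstrap you have not proved the statement for $p\neq 2$. The rest of your discussion — the uniformity of the constants in the starting point, and the verification that the mollification preserves the quantitative hypotheses — is sound and matches the paper, but you should add the moment-bound plus Cauchy–Schwarz step to make the argument complete.
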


\begin{proof}
As indicated in the introduction,  Assumption \ref{N1} implies that there is  a unique strong solution for SDE (\ref{e1}), see Theorem 1 in \cite{Ve}.  Under the assumptions of the theorem,  we may apply Lemma \ref{lemma1} and Theorem A  in \cite{Kan} to obtain (\ref{N35}) for $p=2$. The case of $p<2$ follows from the H\"older's inequality.
If $p>2$, we have for each $T>0$,
\begin{equation}\label{N36-}
\begin{split}
&\E\sup_{0\leqslant s \leqslant T}|\xi_s^{\varepsilon}(x)-\xi_s(x)|^p
=\E \Big[\big(\sup_{0\leqslant s \leqslant T}|\xi_s^{\varepsilon}(x)-\xi_s(x)|\big)
\big(\sup_{0\leqslant s \leqslant T}|\xi_s^{\varepsilon}(x)-\xi_s(x)|\big)^{p-1}\Big]\\
&\leqslant \sqrt{\E\big[\sup_{0\leqslant s \leqslant T}|\xi_s^{\varepsilon}(x)-\xi_s(x)|^2\big]}
\sqrt{\E\big[\sup_{0\leqslant s \leqslant T}|\xi_s^{\varepsilon}(x)-\xi_s(x)|^{2p-2}\big]}
\end{split} 
\end{equation}
 In fact, from the uniform boundedness of  the coefficiens  of SDE (\ref{e1}) and that of (\ref{e2}), 
we can get the following for all  bounded set $S$ in $\R^n$, $T>0,p>0$ 
\begin{equation}\label{N37}
\sup_{0<\varepsilon<\varepsilon_0} \sup_{x \in S} \E\sup_{0\leqslant s \leqslant T}|\xi_s^{\varepsilon}(x)|^{2p-2}+
 \sup_{x \in S} \E\sup_{0\leqslant s \leqslant T}|\xi_s(x)|^{2p-2} <\infty.
\end{equation}
So the conclusion follows from (\ref{N36-}) and (\ref{N37}).
\end{proof}

\begin{lemma}\label{Lem N2}
Suppose $(\xi_s^{\varepsilon}(x), s\geqslant 0)$ are the solutions to the family of smooth SDEs (\ref{e2-1}). Assume that 
the coefficients of these SDEs are uniformly elliptic with a common uniform elliptic constant $\theta$ and 
uniformly bounded with a common bound $M$, and
$\sup_{\varepsilon}|A_l^{\varepsilon}(x)-A_l^{\varepsilon}(y)|\leqslant K|x-y|^{\alpha} 
\ \forall x,y \in \R^n$ for some $K>0, \ 0<\alpha <1$. For each $(s,x)$, we assume that $\xi_s^{\varepsilon}(x)$ 
converges to $\xi_s(x)$  almost surely as $\varepsilon$ tends to zero.  Then the distribution of  $\xi_s(x)$  is absolutely continuous with respect to the Lebesgue measure in $\R^n$, 
and we have the following estimate for the transition kernel $p_s(x,.)$, 
 \begin{equation}\label{N3}
p_s(x,y)\leqslant  C_1 s^{-\frac{n}{2}}\e^{-\frac{|x-y|^2}{2C_1 s}}, \qquad   \forall\; s\in(0,T],   \; x,y \in \R^n 
\end{equation}
where the constant $C_1$  depends only on $K,\alpha,M,\theta, n, T$.
In particular, under the Assumption \ref{N1}, the estimate holds for the Markov kernel of  SDE (\ref{e1}). 
\end{lemma}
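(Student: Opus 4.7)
The plan is in two steps: first obtain a uniform Gaussian upper bound on the transition densities of the smooth approximations using classical parabolic PDE theory, then transfer this bound to the limit law via weak convergence.

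\textbf{Step 1: Aronson-type bound for the approximations.} For $\varepsilon$ small enough Lemma \ref{lemma2} gives uniform ellipticity of $a^\varepsilon$ with constant $\theta/2$, and the hypotheses supply a common uniform bound $M$ on $|A_l^\varepsilon|$ and a common H\"older bound $|A_l^\varepsilon(x)-A_l^\varepsilon(y)|\le K|x-y|^\alpha$. Because each $A_l^\varepsilon$ is smooth the diffusion admits a transition density $p_s^\varepsilon(x,y)$, namely the fundamental solution of the backward Kolmogorov equation for $L^\varepsilon=\tfrac12\sum a_{ij}^\varepsilon\partial_i\partial_j+\sum A_{i0}^\varepsilon\partial_i$. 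Levi's parametrix construction (as in Friedman's \emph{Partial Differential Equations of Parabolic Type}, Ch.\ 1, or Il'in--Kalashnikov--Oleinik) builds $p^\varepsilon$ from the frozen-coefficient Gaussian kernel and delivers the pointwise estimate
\begin{equation*}
p_s^\varepsilon(x,y)\le C_1 s^{-n/2}\exp\!\Big(-\tfrac{|x-y|^2}{2C_1 s}\Big),\qquad s\in(0,T],\; x,y\in\R^n,
\end{equation*}
where $C_1$ depends only on $K,\alpha,M,\theta,n,T$. Crucially, no derivative norm of $A_l^\varepsilon$ enters $C_1$, since the parametrix iteration and its remainder bounds involve only the H\"older seminorm of the principal symbol and the sup norm of the drift.

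\textbf{Step 2: Passage to the limit.} The hypothesized almost-sure convergence $\xi_s^\varepsilon(x)\to\xi_s(x)$ forces weak convergence of laws $\mu_s^\varepsilon\Rightarrow \mu_s$. For any nonnegative $f\in C_b(\R^n)$,
\begin{equation*}
\int f\,d\mu_s=\lim_{\varepsilon\to 0}\int f(y)\,p_s^\varepsilon(x,y)\,dy\le \int f(y)\, C_1 s^{-n/2}e^{-|x-y|^2/(2C_1 s)}\,dy,
\end{equation*}
so $\mu_s$ is absolutely continuous and its density satisfies (\ref{N3}). The final claim about SDE (\ref{e1}) follows because, under Assumption \ref{N1}, the mollified coefficients $A_l^\varepsilon$ fulfil the hypotheses of Step 1 (Lemma \ref{lemma1} preserves $M$ and the H\"older data; Lemma \ref{lemma2} preserves ellipticity) and Lemma \ref{N2} supplies the required a.s.\ convergence.

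\textbf{Main obstacle.} The delicate issue is Step 1: one must ensure that the Aronson constant is genuinely independent of $\varepsilon$, controlled by $K,\alpha,M,\theta,n,T$ alone and not by any $C^1$ or higher norm of $A_l^\varepsilon$ (which blows up as $\varepsilon\to 0$). The parametrix method achieves this because the series defining $p^\varepsilon-$(frozen Gaussian) converges with bounds expressed purely through the H\"older modulus of the diffusion matrix and the uniform bound on the drift. An alternative route is the Stroock--Varadhan martingale-problem estimates, which from the outset yield density bounds depending only on the uniform parabolic data. Once Step 1 is in hand, the limiting argument in Step 2 is essentially formal.
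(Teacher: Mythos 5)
Your proposal matches the paper's proof: both rely on the uniform Aronson-type Gaussian bound for $p_s^\varepsilon$ coming from the Friedman / Il'in--Kalashnikov--Oleinik parametrix estimates (with the key observation that $C_1$ depends only on $K,\alpha,M,\theta,n,T$), and then pass to $\varepsilon\to 0$ via dominated convergence for $f\in C_b(\R^n)$. The only place you compress more than the paper is the deduction ``so $\mu_s$ is absolutely continuous'': this needs the routine upgrade from $C_b$ test functions to indicators of open sets by monotone/Fatou and then to Lebesgue-null sets by outer regularity, which the paper spells out, but the idea is the same.
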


\begin{proof}
Denote by ${\mathfrak B}_b(\R^n)$ the set of bounded measurable functions in $\R^n$. Since 
the coefficients of SDE (\ref{e2-1}) are smooth and  $A_l^{\varepsilon}, 1\leqslant l \leqslant m$
are uniformly elliptic, the distribution of the solution $\xi_t^{\varepsilon}(x)$   is 
absolutely continuous with respect to the Lebesgue measure in $\R^n$ for each $s>0, \varepsilon>0$.
(for example, see \cite{Nua}).  
Let $p_s^{\varepsilon}(x,y):\R^n\times \R^n\rightarrow \R$ be the Markov kernel,  so 
  for $f\in {\mathfrak B}_b(\R^n)$,
\begin{equation*}
\E f(\xi_s^{\varepsilon}(x))=\int_{\R^n}p_s^{\varepsilon}(x,y) f(y) dy. 
\end{equation*}
By  classical results in diffusion theory, $p_s^{\varepsilon}(x,y)$ is the fundamental
solution of the following parabolic PDE, 
\begin{equation*}
\begin{cases}
& \frac{\partial u^{\varepsilon}}{\partial t}=\sum_{i,j}a_{ij}^{\varepsilon}\frac{\partial u^{\varepsilon}}{\partial x_i \partial x_j}+
\sum_i A_{i0}^{\varepsilon} \frac{\partial u^{\varepsilon}}{\partial x_i}\\
& u^{\varepsilon}(0,y)=\delta_{x}. 
\end{cases}
\end{equation*}
By the estimate for the fundamental solution of non-divergence form 
parabolic PDE, see  \cite{Fri} or \cite{LKO}, there are constants $c_1, c_2$ such that for $s\in (0,T]$,
\begin{equation*}
 p_s^{\varepsilon}(x,y)\leqslant c_1 s^{-\frac{n}{2}}\e^{-\frac{|x-y|^2}{2c_2 s}}, \qquad  s\in (0,  T],   \; \varepsilon\in (0, \varepsilon_0]. 
\end{equation*}
 The constants depend only on  the uniform elliptic constants of $a_{ij}^{\varepsilon}$, the bounds on $a_{ij}^{\varepsilon}$ and $A_{i0}^{\varepsilon}$, the H\"older constants $K,\alpha$ of $a_{ij}^{\varepsilon}, A_{i0}^{\varepsilon}$, 
the dimension $n$ and time interval $T$. In particular the constants are independent of $\varepsilon$ when $\varepsilon$ is sufficiently small by the condition of this lemma. Take $C_1=\max(c_1,c_2)$ for simplicity. 


As assumed in the condition, $\lim_{\varepsilon\to 0}\xi_s^\varepsilon(x)=\xi_s(x)$ almost surely.  Taking $\varepsilon\to 0$, by the Lebesgue  dominated convergence theorem, 
for each $f \in C_b(\R^n)$,\begin{equation}\label{N4}
\E f(\xi_s(x))\leqslant C_1 s^{-\frac{n}{2}}\int_{\R^n} \e^{-\frac{|x-y|^2}{2C_1 s}}f(y) dy 
\end{equation}

For every bounded open sets $O$ in $\R^n$, let $f_k$ be a sequence of non-negative functions in
$ C_b(\R^n)$ such that
$\lim_{k\rightarrow \infty} f_k=I_{O}$ where the convergence is pointwise.  Fatou lemma leads to 
\begin{equation}\label{N5}
\E I_{O}(\xi_s(x))\leqslant C_1 s^{-\frac{n}{2}}\int_{O} \e^{-\frac{|x-y|^2}{2C_1 s}}f(y) dy 
\end{equation}
The same inequality also holds for every open, not necessarily bounded set by Fatou lemma. It follows that if 
$\Gamma$ in $\R^n$ is a set with $\lambda(\Gamma)=0$, where $\lambda$ denotes the Lebesgue, then $\E I_{\Gamma}(\xi_s(x))=0$. 
In fact, by regularity of the measure, there are open subsets  $O_k$ 
with $\Gamma\subseteq O_k,\  \lambda(O_k)\downarrow 0$ and so
\begin{equation*}
\E I_{\Gamma}(\xi_s(x))\leqslant \E I_{O_k}(\xi_s(x))\leqslant C_1 s^{-\frac{n}{2}}\int_{O_k} \e^{-\frac{|x-y|^2}{2C_1 s}}f(y) dy  \stackrel{k\to \infty}{\to} 0.
\end{equation*}
 Now  the distribution of  $\xi_s(x)$  is 
absolutely continuous with respect to the Lebesgue measure in $\R^n$  for each
$x \in \R^n$ and by (\ref{N4}) for all $f \in C_b(\R^n)$, we have,  
\begin{equation*}
\int_{\R^n}p_s(x,y)f(y)dy \leqslant C_1 s^{-\frac{n}{2}}\int_{\R^n} \e^{-\frac{|x-y|^2}{2C_1 s}}f(y) dy  
\end{equation*}
By general approximation procedure the above inequality also holds for each 
$f \in {\mathfrak B}_b(\R^n)$, which implies the 
Markov kernel $p_s(x,y)$ has the same upper bound: $C_1 s^{-\frac{n}{2}}\e^{-\frac{|x-y|^2}{2C_1 s}} $. 
If Assumption \ref{N1} holds, by Lemma \ref{lemma1}, \ref{lemma2} and Lemma \ref{N2}, all the condition above are satisfied  so  the upper bounded for the Markov kernel of the solution 
of SDE (\ref{e1}) follows. 
\end{proof}

Let $K_s(x,y):=s^{-\frac{n}{2}}\e^{-\frac{|x-y|^2}{2s}}, s>0, x,y \in \R^n$.
Let $g:\R^n\to \R^+$ be a Borel measurable function.  If there exists a constant $T_0>0$, such that $\int_0^{T_0}\int_{\R^n}g(y)K_s(x,y)dyds<\infty$, then by Lemma \ref{Lem N2}, 
$\int_0^{\min(T_0,\frac{T_0}{C_1})}\E g(\xi_s(x))ds<\infty$. 

From now on we assume 
the estimate for the Markov kernel in Lemma \ref{Lem N2} are considered in time interval
$0<s \leqslant \tilde{T}$ for some fixed $\tilde{T}$

\begin{lemma}\label{lemma3-1}
Let $g:\R^n\to \R$ be a Borel measurable function. Assume Assumption \ref{N1}  and that there exist $T_0>0$, and 
$p\geqslant1$, such that 
\begin{equation}\label{N7-1}
\sup_{x \in S}\int_0^{T_0}\int_{\R^n}|g(y)|^{p}K_s(x,y)dyds<\infty 
\end{equation} 
for any bounded set $S$ in $\R^n$. Set $T_1=\min(\tilde{T},\frac{T_0}{C_1})$ where $C_1$ is the constant in the 
transition kernel, c.f. (\ref{N3}), on 
the time interval $(0,\tilde{T}]$. Then for  $g_{\varepsilon}=\eta_{\varepsilon}\ast g$ 
and any bounded set $S$ in $\R^n$, 
\begin{equation}\label{N36}
\sup_{\varepsilon<\varepsilon_0}\sup_{x \in S}
\int_0^{T_1}\E|g_{\varepsilon}(\xi_s^{\varepsilon}(x))|^{p}ds<\infty 
\end{equation}
where $\varepsilon_0$ is the constant  in Lemma \ref{lemma2}.
\end{lemma}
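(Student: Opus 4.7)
The plan is to combine three ingredients: the uniform Gaussian upper bound on the Markov kernels $p_s^{\varepsilon}$ of the approximating SDEs (supplied by Lemma \ref{Lem N2} together with Lemma \ref{lemma2}), Jensen's inequality to pass from $|g_{\varepsilon}|^p$ to $\eta_{\varepsilon}\ast |g|^p$, and the translation invariance of the kernel $K_u$ to keep the base point inside a bounded set.

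First, I would write $\E|g_{\varepsilon}(\xi_s^{\varepsilon}(x))|^p=\int_{\R^n}|g_{\varepsilon}(y)|^p\,p_s^{\varepsilon}(x,y)\,dy$ and invoke Lemma \ref{Lem N2} (applicable with a constant $C_1$ independent of $\varepsilon<\varepsilon_0$ thanks to Lemma \ref{lemma2}) to bound $p_s^{\varepsilon}(x,y)\le C_1 s^{-n/2}\e^{-|x-y|^2/(2C_1 s)}$. The substitution $u=C_1 s$ then yields
$$\int_0^{T_1}p_s^{\varepsilon}(x,y)\,ds\le C_1^{n/2}\int_0^{C_1 T_1}K_u(x,y)\,du\le C_1^{n/2}\int_0^{T_0}K_u(x,y)\,du,$$
because $C_1 T_1\le T_0$ by the definition of $T_1$. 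Next, since $\eta_{\varepsilon}$ is a probability density and $t\mapsto |t|^p$ is convex for $p\ge 1$, Jensen's inequality gives $|g_{\varepsilon}(y)|^p\le (\eta_{\varepsilon}\ast |g|^p)(y)$. Hence, by Fubini,
$$\int_0^{T_1}\E|g_{\varepsilon}(\xi_s^{\varepsilon}(x))|^p\,ds \le C_1^{n/2}\int_0^{T_0}\int_{\R^n}(\eta_{\varepsilon}\ast |g|^p)(y)\,K_u(x,y)\,dy\,du.$$

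The key trick is the translation identity $K_u(x,z+w)=K_u(x-w,z)$, which lets me unwind the convolution without inflating the exponent. Expanding $\eta_{\varepsilon}\ast|g|^p$ and interchanging order of integration,
$$\int_{\R^n}(\eta_{\varepsilon}\ast |g|^p)(y)\,K_u(x,y)\,dy=\int_{B_{\varepsilon}}\eta_{\varepsilon}(w)\int_{\R^n}|g(z)|^p K_u(x-w,z)\,dz\,dw.$$
For $\varepsilon<\varepsilon_0$ and $x\in S$ the shifted base points $x-w$ all lie in the bounded set $\tilde S:=S+\overline{B_{\varepsilon_0}}$; hypothesis \eqref{N7-1} applied with $\tilde S$ in place of $S$ then produces a finite upper bound that is uniform in $w$ and $x$. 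Using $\int\eta_{\varepsilon}=1$, this gives
$$\sup_{\varepsilon<\varepsilon_0}\sup_{x\in S}\int_0^{T_1}\E|g_{\varepsilon}(\xi_s^{\varepsilon}(x))|^p\,ds\le C_1^{n/2}\sup_{x'\in\tilde S}\int_0^{T_0}\int_{\R^n}|g(z)|^p K_u(x',z)\,dz\,du<\infty,$$
as required.

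The expected obstacle is the temptation to bound $K_u(x,z-w)$ by a Gaussian centered at $x$ via $|x-z+w|^2\ge \tfrac12|x-z|^2-\varepsilon^2$: this would introduce a factor $\e^{\varepsilon^2/(2u)}$ which fails to be integrable in $u$ near $0$. The translation identity $K_u(x,z+w)=K_u(x-w,z)$ is the clean way to circumvent this, because it shifts the base point rather than distorting the exponent, and the assumption \eqref{N7-1} is exactly phrased to tolerate such a shift by a bounded vector.
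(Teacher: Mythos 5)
Your proof is correct and follows essentially the same route as the paper: Gaussian kernel bound uniform in $\varepsilon$, Jensen's inequality to pass to $\eta_{\varepsilon}\ast|g|^p$, and then a rearrangement that transfers the mollification onto the base point so that \eqref{N7-1} applied to the enlarged bounded set $S+\overline{B_{\varepsilon_0}}$ finishes the argument. The paper phrases your translation identity as the convolution rearrangement $(\eta_{\varepsilon}\ast|g|^p)\ast\tilde K_s=(|g|^p\ast\tilde K_s)\ast\eta_{\varepsilon}$, which is the same computation.
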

\begin{proof}
Recall the transition kernel estimates we use before, \begin{equation*}
 p_s^{\varepsilon}(x,y)\leqslant C_1 s^{-\frac{n}{2}}\e^{-\frac{|x-y|^2}{2C_1 s}}, \qquad  \forall \;
 s\in (0,\tilde{T}),  \varepsilon\in (0, \varepsilon_0). 
\end{equation*}
In the remaining part of the proof, the constants $C$ which appear in the computation 
may change from line to line and depend only on $K,\alpha, M,\theta,\delta,n,\tilde{T},p$.  Define $\tilde{K}_s(x)=K_s(x,0)$, $s>0$, $x\in \R^n$.
For  $T\in (0,\tilde{T}]$ and $\varepsilon\in (0,\varepsilon_0)$, we  derive the following estimate:
\begin{equation*}
\begin{split}
\int_0^{T}\E|g_{\varepsilon}(\xi_s^{\varepsilon}(x))|^{p}ds
&\leqslant C\int_0^{{C_1 T}} \int_{\R^n}|g_{\varepsilon}(y)|^{p} K_s(x,y)dyds\\
&\leqslant C \int_0^{C_1 T}\int_{\R^n}\eta_{\varepsilon}\ast |g|^{p}(y)
\tilde{K}_s(x-y)dyds\\
&=C \int_0^{C_1 T}\big(\eta_{\varepsilon}\ast |g|^{p}\big)\ast \tilde{K}_s(x)ds\\
&= C \Big(\int_0^{C_1 T}|g|^{p}\ast \tilde{K}_sds\Big)\ast \eta_{\varepsilon}(x)
\end{split}
\end{equation*}
The last step  is due to the property that $f\ast h=h\ast f $ for locally integrable 
functions and  Fubini's Theorem. Since we assume that
$\int_0^{T_0}|g|^{p}\ast \tilde{K}_s\; ds$ is locally bounded in $\R^n$
for any bounded set $S$ in $\R^n$, when $C_1 T \leqslant T_0$, i.e.
$T\leqslant T_1:=\min(\tilde{T},\frac{T_0}{C_1})$, the following holds:
  \begin{equation}\label{N01}
\sup_{\varepsilon<\varepsilon_0}\sup_{x \in S}
\int_0^{T}\E|g_{\varepsilon}(\xi_s^{\varepsilon}(x))|^{p}ds<\infty. 
\end{equation} 
\vskip 2pt 
\end{proof}

\begin{lemma} \label{lemma3}
Let $g:\R^n\to \R$ be a Borel measurable function. Suppose Assumption \ref{N1} holds and that there exist $T_0>0$, $\delta\in (0,1)$ and $p\ge 1$ such that 
\begin{equation}\label{N7}
\sup_{x \in S}\int_0^{T_0}\int_{\R^n}|g(y)|^{p(1+\delta/2)}K_s(x,y)dyds<\infty 
\end{equation}
for any bounded set $S$ in $\R^n$. 
If moroever, 
\begin{equation}\label{N6}
g \in L^{\overline{p}(n)}_{\loc}(\R^n) 
\end{equation}
where $\overline{p}(n)=\max\{p(1+\delta), \frac{pn(1+\delta)}{2}\}$. Let $T_1=\min(\tilde{T},\frac{T_0}{C_1})$
(constant $C_1$ is the same as that in Lemma \ref{lemma3-1}), 
then for $g_{\varepsilon}=\eta_{\varepsilon}\ast g$
and any bounded set $S$ in $\R^n$
\begin{equation}\label{N31}
\lim_{\varepsilon\rightarrow 0}\sup_{x \in S}
\int_0^{T_1} \E|g_{\varepsilon}(\xi_s^{\varepsilon}(x))-g(\xi_s(x))|^p ds=0. 
\end{equation}
\end{lemma}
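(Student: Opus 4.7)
The plan is to apply the triangle inequality
\begin{equation*}
|g_\varepsilon(\xi_s^\varepsilon(x))-g(\xi_s(x))|^p\le 2^{p-1}\bigl(|g_\varepsilon(\xi_s^\varepsilon(x))-g(\xi_s^\varepsilon(x))|^p+|g(\xi_s^\varepsilon(x))-g(\xi_s(x))|^p\bigr)
\end{equation*}
and treat the two pieces separately.

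For the first piece, since both values are taken at $\xi_s^\varepsilon(x)$, the Markov kernel bound of Lemma \ref{Lem N2} reduces it, exactly as in the proof of Lemma \ref{lemma3-1}, to
\begin{equation*}
C\int_0^{C_1T_1}\int_{\R^n}|g_\varepsilon(y)-g(y)|^p K_s(x,y)\,dy\,ds.
\end{equation*}
I would split the $y$-integral at a large ball $B_R$ containing $\overline S$. Inside $B_R$ I apply H\"older with exponent $r=\overline{p}(n)/p$, namely $r=1+\delta$ for $n=1$ and $r=n(1+\delta)/2$ for $n\ge 2$, which places the singularity on $K_s$: a direct computation gives $\bigl(\int_{B_R}K_s^{r'}\,dy\bigr)^{1/r'}=Cs^{-1/(2(1+\delta))}$ or $Cs^{-1/(1+\delta)}$ respectively, both integrable on $(0,C_1T_1]$, while the remaining factor $\|g_\varepsilon-g\|_{L^{\overline{p}(n)}(B_R)}^p$ vanishes by Lemma \ref{lemma1}(2). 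Outside $B_R$, H\"older in $y$ with exponent $1+\delta/2$ together with $\int K_s\,dy\le C$ reduces the question to making $\int_0^{T_0}\int_{B_R^c}|g_\varepsilon-g|^{p(1+\delta/2)}K_s\,dy\,ds$ small uniformly in $\varepsilon$ and $x\in S$; using Jensen's inequality $|g_\varepsilon|^{p(1+\delta/2)}\le \eta_\varepsilon\ast|g|^{p(1+\delta/2)}$ and the identity $K_s(x,w+z)=K_s(x-z,w)$, this tail is dominated by $\int\eta_\varepsilon(z)J_{R-\varepsilon}(x-z)\,dz$ with $J_R(u):=\int_0^{T_0}\int_{|w|>R}|g|^{p(1+\delta/2)}K_s(u,w)\,dw\,ds$. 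Since $J_0$ is locally bounded by assumption (\ref{N7}), $J_R\downarrow 0$ by dominated convergence, and Dini's theorem on the compact thickening $\overline S+B_{\varepsilon_0}$ upgrades this to uniform smallness in $x\in S$ and $\varepsilon<\varepsilon_0$.

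For the second piece, I first approximate $g$ by a bounded continuous function $\tilde g$ satisfying $\sup_{x\in S}\int_0^{T_0}\int|g-\tilde g|^{p(1+\delta/2)}K_s\,dy\,ds<\eta$ for a prescribed $\eta>0$; such $\tilde g$ is obtained by truncating $g$ at a large level and mollifying at a small scale, the two scales being fixed by the same Dini argument. Writing
\begin{equation*}
g(\xi_s^\varepsilon)-g(\xi_s)=(g-\tilde g)(\xi_s^\varepsilon)+(\tilde g(\xi_s^\varepsilon)-\tilde g(\xi_s))+(\tilde g-g)(\xi_s),
\end{equation*}
the outer terms are controlled by the kernel bound applied to $\xi_s^\varepsilon$ and $\xi_s$ (Lemma \ref{Lem N2} supplies both) combined with H\"older, and are small by the choice of $\tilde g$. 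For the middle term, Lemma \ref{N2} gives $\xi_s^\varepsilon(x)\to\xi_s(x)$ in $L^p$ uniformly in $s\in[0,T_1]$ and $x\in\overline S$; since $\tilde g$ is bounded continuous, the bounded convergence theorem then yields $\sup_{x\in S}\int_0^{T_1}\E|\tilde g(\xi_s^\varepsilon)-\tilde g(\xi_s)|^p\,ds\to 0$, with uniformity supplied by the fixed dominator $(2\|\tilde g\|_\infty)^p T_1$ and compactness of $\overline S$.

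The main obstacle is preserving uniformity in $x\in S$ at every step, especially the tail estimate in the first piece and the construction of $\tilde g$ in the second; both hinge on Dini's theorem applied to continuous monotone families of integrals, using compactness of $\overline S$ and continuity in $x$ of $K_s(x,\cdot)$. The appearance of the exponent $\overline{p}(n)=\max\{p(1+\delta),pn(1+\delta)/2\}$ in hypothesis (\ref{N6}) is forced by requiring $\|K_s\|_{L^{r'}(B_R)}$ to be $s$-integrable near $0$, which cleanly splits into the cases $n=1$ and $n\ge 2$.
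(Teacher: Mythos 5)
Your first-level decomposition is the same as the paper's: triangle inequality splitting the target into a mollification error $|g_\varepsilon(\xi_s^\varepsilon)-g(\xi_s^\varepsilon)|^p$ and a flow error $|g(\xi_s^\varepsilon)-g(\xi_s)|^p$, followed by the Markov-kernel reduction of Lemma \ref{lemma3-1}. Your inside-$B_R$ estimate for the first piece is exactly the paper's $I_{11}^\varepsilon$, with the same choice of H\"older exponent $r=\overline{p}(n)/p$ explaining the threshold $\overline{p}(n)$. Your handling of the second piece, however, is a genuine alternative: you replace the paper's Egoroff argument (a version $\widetilde g$ of $g$ uniformly continuous on $\overline{B_R}\setminus U(R,\zeta)$, with the bad event controlled by Chebyshev, Lemma \ref{N2}, and $\p(\xi_s\in U)\le C\zeta^{1/n}s^{-1/2}$) with an approximation of $g$ by a bounded uniformly continuous $\tilde g$ in a kernel-weighted $L^{p(1+\delta/2)}$ sense, reducing the middle term $\tilde g(\xi_s^\varepsilon)-\tilde g(\xi_s)$ to bounded convergence and the outer terms to the kernel bound plus H\"older. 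Conceptually this is cleaner than Egoroff and, once $\tilde g$ is in hand, easier to make uniform in $x\in S$ because the uniformity comes from the single $L^p$-convergence in Lemma \ref{N2} rather than from several event probabilities.

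The gap is in the uniformity of your tail and truncation estimates, where you invoke Dini's theorem. Dini requires continuity in $u$ of $J_R(u)=\int_0^{T_0}\int_{|w|>R}|g|^{p(1+\delta/2)}K_s(u,w)\,dw\,ds$ (and of the analogous truncation family used to build $\tilde g$), but this does not follow from (\ref{N7}). Indeed $J_0$ is the convolution of $|g|^{p(1+\delta/2)}\in L^1_{\loc}$ with $\int_0^{T_0}\tilde K_s\,ds$, which for $n\ge 2$ has a Newtonian-potential-type singularity at the origin; (\ref{N7}) gives local boundedness of such a convolution but not continuity, and a locally bounded family decreasing pointwise to $0$ need not do so uniformly on compacts. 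A domination like $K_s(x,w)\le C\,K_{4s}(0,w)$ (valid for $|x|\le r_0<R/2$) would push a DCT argument through, but only at the cost of requiring (\ref{N7}) on $[0,4T_0]$, i.e.\ shrinking $T_1$ below the stated $\min(\tilde T,T_0/C_1)$. The paper avoids the issue entirely: in $I_{12}^\varepsilon$ it H\"olders on the expectation and uses Chebyshev $\p(|\xi_s^\varepsilon|>R)\le \E|\xi_s^\varepsilon|^2/R^2\le C/R^2$ together with the uniformly bounded $L^{p(1+\delta/2)}$ integral (\ref{G12}), so nothing about $g$ has to converge uniformly in $x$. You should replace your $J_R$/Dini step by this route, and in the construction of $\tilde g$, after truncation and mollification, split at a ball $B_R$: inside, use Lemma \ref{lemma1}(2); outside, H\"older against the pure kernel tail $\int_0^{T_0}\int_{|y|>R}K_s(x,y)\,dy\,ds$, which \emph{is} continuous in $x$ and monotone in $R$, so Dini does apply there.
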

\begin{proof}
In the remaining part of the proof, the constants $C$ which appear in the computation 
may change from line to line and depend only on $K,\alpha, M,\theta,$
$\delta,n,\tilde{T},p$. They do not depend on $\varepsilon$ and $R$, which is essential for taking $\varepsilon$ to $0$ and $R$ to infinity.
\vskip 2pt

The required uniform convergence requires an uniform estimate. Since $W^{1,p}$ spaces are not included in $W^{1,\infty}$, we must sacrifice some integrability for this uniform estimate.
Fixed $T\leqslant T_1$, let 
\begin{eqnarray*}
&&\int_0^{T} \E|g_{\varepsilon}(\xi_s^{\varepsilon}(x))-g(\xi_s(x))|^p ds \\
&&\leqslant C \int_0^{T} \E|g_{\varepsilon}(\xi_s^{\varepsilon}(x))-g(\xi_s^{\varepsilon}(x))|^p ds
+ C\int_0^{T} \E|g(\xi_s^{\varepsilon}(x))-g(\xi_s(x))|^p ds\\
&&:=I_1^{\varepsilon}(x,T)+I_2^{\varepsilon}(x,T)
\end{eqnarray*}
Note that 

\begin{eqnarray*}
I_1^{\varepsilon}(x,T)
&\leqslant &C\int_0^T \E\big[|g^{\varepsilon}(\xi_s^{\varepsilon}(x))-g(\xi_s^{\varepsilon}(x))|^p
I_{\{|\xi_s^{\varepsilon}(x)|\leqslant R\}}\big]ds\\
&&+ C\int_0^T \E\big[|g^{\varepsilon}(\xi_s^{\varepsilon}(x))-g(\xi_s^{\varepsilon}(x))|^p
I_{\{|\xi_s^{\varepsilon}(x)|> R\}}\big]ds\\
&:=&I^{\varepsilon}_{11}(x,T,R)+I^{\varepsilon}_{12}(x,T,R).
\end{eqnarray*}

For each $\frac{1}{p_1}+\frac{1}{q_1}=1$, by Markov kernel estimate and H\"older inequality
\begin{eqnarray*}
&&I^{\varepsilon}_{11}(x,T,R)\\
&\leqslant& C\int_{0}^{C_1 T}
C s^{-\frac{n}{2}}\big(\int_{\R^n}\e^{-\frac{p_1|x-y|^2}{2s}}\big)^{\frac{1}{p_1}}
\big(\int_{|y|\leqslant R}|g_{\varepsilon}-g|^{pq_1}(y)dy  \big)^{\frac{1}{q_1}}ds\\
&\leqslant& C 
\int_{0}^{C_1 T}s^{-\frac{n}{2}(1-\frac{1}{p_1})}
\left(\int_{|y|\leqslant R} |g_{\varepsilon}-g|^{pq_1}(y)dy \right)^{\frac{1}{q_1}}ds\\
\end{eqnarray*}

When $n>1$,  we take $q_1=\frac{n(1+\delta)}{2}$ in above inequality. Then
\begin{equation}\label{N10}
\begin{split}
I^{\varepsilon}_{11}(x,T,R)
&\leqslant C 
\int_{0}^{C_1T} s^{-\frac{1}{1+\delta}}
\big(\int_{|y|\leqslant R} |g_{\varepsilon}-g|^{\frac{pn(1+\delta)}{2}}(y)dy \big)^{\frac{2}{n(1+\delta)}}ds\\
&\leqslant C \left(\int_{|y|\leqslant R} |g_{\varepsilon}-g|^{\frac{np(1+\delta)}{2}}(y)dy \right)^{\frac{2}{n(1+\delta)}}.
\end{split}
\end{equation}

For $n=1$ the corresponding estimate is
\begin{equation*}
I^{\varepsilon}_{11}(x,T,R)\leqslant C \big(\int_{|y|\leqslant R} |g_{\varepsilon}-g|^{p(1+\delta)}(y)dy \big)^{\frac{1}{(1+\delta)}}
\end{equation*}
Under  condition (\ref{N6}), by Lemma \ref{lemma1}, for each fixed $R>0$, we have 
\begin{equation}\label{N02}
\lim_{\varepsilon \rightarrow 0}\sup_{x \in S} I^{\varepsilon}_{11}(x,T,R)=0.
\end{equation}

For the term involving large $|\xi_s^\varepsilon(x)|$,  H\"older inequality gives
\begin{equation*}
I^{\varepsilon}_{12}(x,T,R)\leqslant C \left(\int_0^T 
\E|g^{\varepsilon}(\xi_s^{\varepsilon}(x))-g(\xi_s^{\varepsilon}(x))|^{p(1+\delta/2)}ds\right)^{\frac{2}{2+\delta}}
\left(\int_0^T 
\frac{\E|\xi_s^{\varepsilon}(x)|^2}{R^2}\right)^{\frac{\delta}{2+\delta}}.\\
\end{equation*}

The first factor on the right hand side is bounded uniformly in $S$, since by 
(\ref{N7}) and Lemma \ref{lemma3-1}, 
\begin{equation}\label{G12}
\sup_{x \in S}\int_0^{T} \E |g(\xi_s(x))|^{p(1+\delta/2)}ds+
\sup_{\varepsilon <\varepsilon_0}\sup_{x \in S}\int_0^{T} \E |g(\xi_s^{\varepsilon}(x))|^{p(1+\delta/2)}ds
<\infty.
\end{equation}

To estimate the second factor note that the vector fields $A_l$, and $A_l^\varepsilon$ are bounded uniformly in $\varepsilon$ for sufficiently small $\varepsilon$ and so by standard estimates for bounded set $S$ in $\R^n$, and $T\in (0, T_1]$, 
\begin{equation}\label{N13}
\sup_{\varepsilon<\varepsilon_0} \sup_{x \in S} \E\sup_{0\leqslant s \leqslant T}|\xi_s^{\varepsilon}(x)|^2+
 \sup_{x \in S} \E\sup_{0\leqslant s \leqslant T}|\xi_s(x)|^2 <\infty.
\end{equation}

Hence there exists a constant $C(K,\alpha, M,\theta, n, T_0, \tilde{T}, S)$, not depending  on $R$ or $\varepsilon$, such that, 
\begin{equation*}
\sup_{\varepsilon<\varepsilon_0}
\sup_{x \in S}I^{\varepsilon}_{12}(x,T,R)\leqslant \frac{C}{R^{\frac{2\delta}{2+\delta}}}
\end{equation*}

Finally we obtain, for each $R>0$, $\varepsilon<\varepsilon_0$,
$$\sup_{x \in S}I_1^{\varepsilon}(x,T_1 )\leqslant \sup_{x \in S}
I_{11}^{\varepsilon}(x,T_1,R)+ \sup_{x \in S}I_{12}^{\varepsilon}(x,T_1,R)\leqslant \sup_{x \in S} I_{11}^{\varepsilon}(x,T_1,R)+\frac{C}{R^{\frac{2\delta}{2+\delta}}}.$$
  First let $\varepsilon$ tend to 0, taking into account of (\ref{N02}) , then $R$ tend to $\infty$, we see
$$\lim_{\varepsilon \rightarrow 0}\sup_{x \in S} I_1^{\varepsilon}(x,T_1)=0.$$
\vskip 3pt

Next we observe that locally integrable functions are uniformly continuous on sufficiently large  sets.
 By  Egoroff  theorem for finite measures if $f_n$ converges almost surely, for any $\zeta>0$ it converges  uniformly outside of a  set of measure $\zeta$. So for any $g \in \lloc(\R^n)$, there is a function $\widetilde{g}$, such that 
$g=\widetilde{g}$ almost everywhere with Lebesgue measure in $\R^n$, and for positive numbers  $R$ and $\zeta$,  
there exists a open set $U(R,\zeta)$ with the property that
$\lambda(U(R,\zeta))<\zeta$ and    $\widetilde{g}$ is uniformly continuous on $\bar B_R/U$. So for any $r>0$, 
there exists a $\vartheta \equiv \vartheta(R,\zeta,r)$ be such that
$$|\widetilde{g}(x_1)-\widetilde{g}(x_2)|<r, \qquad  \forall x_i\in  \overline{B_{R}}\setminus U(R,\zeta), \;  |x_1-x_2|<\vartheta(R,\zeta,r).$$

Note that by Lemma \ref{Lem N2}, given the function $g=\widetilde{g}$ almost everywhere with Lebesgue measure, we 
have $g(\xi_s(x,\omega))=\widetilde{g}(\xi_s(x,\omega))$ and 
$g(\xi^{\varepsilon}_s(x,\omega))=\widetilde{g}(\xi^{\varepsilon}_s(x,\omega))$ almost surely in the probability space
for each fixed $s>0$ and $0<\varepsilon<\varepsilon_0$. Then for each $0\leqslant s \leqslant T_1$, let
\begin{eqnarray*}
O_1(s)&=&\{\omega:\ |\xi_s^{\varepsilon}(x,\omega)-\xi_s(x, \omega)|<\vartheta(R,\zeta,r)| \},\\
O_2(s)&=&\{\omega:\ \xi_s(x,\omega)\in \overline{B_{R}}\setminus U(R,\zeta)\}
\cap \{\omega: \xi_s^{\varepsilon}(x,\omega)
\in \overline{B_{R}}\setminus U(R,\zeta)\}.
\end{eqnarray*}
For each $0<T\leqslant T_1$, we obtain, 
\begin{equation*}
\begin{split}
&I^{\varepsilon}_2(x,T)=C\int_0^T
\E|\widetilde{g}(\xi_s^{\varepsilon}(x))-\widetilde{g}(\xi_s(x))|^p ds\\
&\leqslant C\int_0^T \E\big[|\widetilde{g}(\xi_s^{\varepsilon}(x))-g(\xi_s(x))|^p
\1_{\{O_1(s)\cap O_2(s)\}}(\omega)\big]ds\\
&+C\int_0^T \E\big[|\widetilde{g}(\xi_s^{\varepsilon}(x))-\widetilde{g}(\xi_s(x))|^p
\1_{\{O_1(s)\cap O_2(s)\}^c}(\omega)\big]ds\\
& \leqslant C r^p+C \Big(\int_0^T 
\E[|\widetilde{g}(\xi_s^{\varepsilon}(x))|^{p(1+\delta/2)}+
|\widetilde{g}(\xi_s(x))|^{p(1+\delta/2)}]ds\Big)^{\frac{2}{2+\delta}}
\Big(\int_0^T 
\p(O_1^c(s)\cup O_2^c(s))ds\Big)^{\frac{\delta}{2+\delta}}
\end{split}
\end{equation*}
By the estimate (\ref{G12}), \begin{equation*}
\begin{split}
&\sup_{x \in S}I^{\varepsilon}_2(x,T)\leqslant C r^p 
+C\sup_{x \in S}\Big(\int_0^{T} 
\big(\p\left(|\xi_s^{\varepsilon}(x)-\xi_s(x)|>\vartheta(R,\zeta,r)\right)\\
&+\p(|\xi_s^{\varepsilon}(x)|>R)
+\p(|\xi_s(x)|>R)
+\p(\xi_s^{\varepsilon}(x) \in U(R,\zeta))+
\p(\xi_s(x) \in U(R,\zeta))\big)ds\Big)^{\frac{\delta}{2+\delta}}\\
&\leqslant Cr^p +C\sup_{x \in S}\Big(\frac{\E\sup_{0\leqslant s \leqslant T}|\xi_s^{\varepsilon}(x)-\xi_s(x)|^2}
{\vartheta(R,\zeta,r)^2}
+\frac{\E[\sup_{0\leqslant s \leqslant T}\big(|\xi_s^{\varepsilon}(x)|^2+|\xi_s(x)|^2\big)]}{R^2}\\
&+\int_{0}^{T_3}\big(\p\left(\xi_s^{\varepsilon}(x) \in U(R,\zeta)\right)+
\p\left(\xi_s(x) \in U(R,\zeta)\right)\big)ds\Big)^{\frac{\delta}{2+\delta}}
\end{split}
\end{equation*}
Then by Lemma \ref{N2} and estimate (\ref{N13}), let $\varepsilon$ tend to $0$, we have,
\begin{equation}\label{N14}
\begin{split}
&\varlimsup_{\varepsilon \rightarrow 0}\sup_{x \in S}I^{\varepsilon}_2(x,T)\leqslant C r^p\\
&+C\Big(
\frac{1}{R^2}
+\int_{0}^{T}\sup_{x \in S}\big(\p\left(\xi_s^{\varepsilon}(x) \in U(R,\zeta)\right)+
\p\left(\xi_s(x) \in U(R,\zeta)\right)\big)ds\Big)^{\frac{\delta}{2+\delta}}
\end{split}  
\end{equation}

The last two items  can be estimated using the Markov kernel upper bounds and H\"older inequality as below,
\begin{equation*}
\begin{cases}
&\sup_{x \in S}\p\big(\xi_s^{\varepsilon}(x) \in U(R,\zeta)\big)\leqslant C s^{-\frac{1}{2}}
(\lambda(U(R,\zeta)))^{\frac{1}{n}}\leqslant C \zeta^{\frac{1}{n}} s^{-\frac{1}{2}}\\
& \sup_{x \in S}\p\big(\xi_s(x) \in U(R,\zeta)\big)\leqslant C \zeta^{\frac{1}{n}} s^{-\frac{1}{2}}
 \end{cases}
\end{equation*}

Put the above estimate into (\ref{N14}), we derive
\begin{equation*}
\begin{split}
\varlimsup_{\varepsilon \rightarrow 0}\sup_{x \in S}I^{\varepsilon}_2(x,T_1)
\leqslant C r^p +\Big( \frac{1}{R^2}+ \zeta^{\frac{1}{n}}\Big)^{\frac{\delta}{2+\delta}}
\end{split}
\end{equation*}

Since $R$, $\zeta$ and $r$ are arbitrary, then let $\zeta, r$ tend to $0$ and
$R$ tend to $\infty$, then we get $\lim_{\varepsilon \rightarrow 0}\sup_{x \in S}I^{\varepsilon}_2(x,T_1)=0$ and by now
we have completed the proof.
\end{proof}

\begin{remark}
Let $n>1$ and $g:\R^n\to \R$ be a function such that $g \in L^{\overline{p}(n)}_{\loc}(\R^n) $,
as (\ref{N6}) and the following, for some $R_0>0$ and $c>0$,
\begin{equation}\label{N15}
 |g(x)|\leqslant \e^{c|x|^2}, \qquad  |x|\geqslant R_0.
\end{equation}
Then  condition (\ref{N7}) holds for $T<\frac{1}{4c}$. In fact
\begin{equation*}
\begin{split}
&\sup_{x \in S}\int_0^{T}\int_{|y|\leqslant R_0}|g(y)|^{p(1+\delta/2)}K_s(x,y)dyds\\
&\leqslant C\int_0^{T}s^{-\frac{2+\delta}{2+2\delta}}
\big(\int_{|y|\leqslant R_0} |g|^{\frac{pn(1+\delta)}{2}}(y)dy \big)^{\frac{2+\delta}{n(1+\delta)}}ds<\infty
\end{split}
\end{equation*}
and, by a change of variable,
\begin{align*}
\sup_{x \in S}\int_0^{T}  \int_{|y|\geqslant R_0}& |g(y)|^{p(1+\delta/2)}  K_s(x,y) \;dy\,ds\\
&\leqslant \sup_{x \in S}\int_0^{T}s^{-\frac{n}{2}}\int_{\R^n}\e^{c\,|y|^2}
\e^{-\frac{|x-y|^2}{2s}}dyds\\
& \leqslant C\sup_{x \in S}\int_0^{T} \int_{\R^n}\e^{2c\,(s|y|^2+|x|^2)}
\e^{-\frac{|y|^2}{2}}dyds\\
&\leqslant  CT\sup_{x \in S}\e^{2c\,|x|^2}\int_{\R^n}\e^{2c\,T|y|^2}
\e^{-\frac{|y|^2}{2}}dy<\infty.
\end{align*}
\end{remark}


\begin{lemma}
\label{Lem N3}
Let $G(x):=\sum_{l=0}^m |DA_l(x)|^2$ and $G^{\varepsilon}(x):=\sum_{l=0}^m |DA_l^{\varepsilon}(x)|^2$. Assume Assumption \ref{N1} and that there exist positive constants $\sigma$ and $T_0$, such that for any bounded set $S$ in $\R^n$, 
\begin{equation}\label{N17}
\sup_{x \in S}\int_0^{T_0}\int_{\R^n}\e^{\sigma G(y)}K_s(x,y)dyds<\infty. 
\end{equation}
Then for each $q>0$, 
 \begin{equation*}
\sup_{\varepsilon<\varepsilon_0}\sup_{x \in S} \E \Big[\e^{6q^2\int_0^{T_2}G^{\varepsilon}
(\xi_s^{\varepsilon}(x))ds} \Big]<\infty
\end{equation*}
for $T_2:=\min(T_1, \frac{\sigma}{6q^2})$ where $T_1=\min(\tilde T,\frac{T_0} {C_1})$ for $C_1$  given by (\ref{N3}).
\end{lemma}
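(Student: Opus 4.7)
The plan is to reduce the exponential moment on the left-hand side to the time-integrated $L^1$ estimate already established in Lemma \ref{lemma3-1} by means of two successive applications of Jensen's inequality: one in the time variable and one in the spatial mollifier. The choice $T_2 = \min(T_1, \sigma/(6q^2))$ is dictated exactly by these two steps: the first requires $6q^2 T_2 \leq \sigma$ while the second requires $T_2 \leq T_1$ so that Lemma \ref{lemma3-1} applies directly.

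First, since $\exp$ is convex and $ds/T_2$ is a probability measure on $[0,T_2]$, Jensen's inequality gives
\[
\exp\Big(6q^2\int_0^{T_2} G^{\varepsilon}(\xi_s^{\varepsilon}(x))\,ds\Big)
\leq \frac{1}{T_2}\int_0^{T_2}\e^{6q^2 T_2\, G^{\varepsilon}(\xi_s^{\varepsilon}(x))}\,ds.
\]
Taking expectations and using $6q^2 T_2\leq \sigma$,
\[
\E\big[\e^{6q^2\int_0^{T_2}G^{\varepsilon}(\xi_s^{\varepsilon}(x))\,ds}\big]
\leq \frac{1}{T_2}\int_0^{T_2}\E\big[\e^{\sigma G^{\varepsilon}(\xi_s^{\varepsilon}(x))}\big]\,ds.
\]

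Next I dominate $\e^{\sigma G^{\varepsilon}}$ by a convolution of $\e^{\sigma G}$. Applying Jensen to the convex function $|\cdot|^2$ with the probability density $\eta_\varepsilon$ componentwise gives $|DA_l^{\varepsilon}|^2 = |\eta_\varepsilon\ast DA_l|^2 \leq \eta_\varepsilon\ast|DA_l|^2$, so $G^{\varepsilon}\leq \eta_\varepsilon\ast G$. A second Jensen inequality for the convex function $\e^{\sigma(\cdot)}$ then gives
\[
\e^{\sigma G^{\varepsilon}(x)}\;\leq\; \e^{\sigma(\eta_\varepsilon\ast G)(x)}\;\leq\; (\eta_\varepsilon\ast \e^{\sigma G})(x).
\]
Inserting this into the previous display and applying Lemma \ref{lemma3-1} with $g=\e^{\sigma G}$ and $p=1$ (whose hypothesis (\ref{N7-1}) is exactly condition (\ref{N17})) yields, since $T_2\leq T_1$,
\[
\sup_{\varepsilon<\varepsilon_0}\sup_{x\in S}\int_0^{T_2}\E\big[(\eta_\varepsilon\ast \e^{\sigma G})(\xi_s^{\varepsilon}(x))\big]\,ds\;<\;\infty.
\]
Dividing by the fixed positive constant $T_2$ completes the argument.

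The proof is essentially bookkeeping and there is no substantial analytic obstacle: once one spots that two convexity arguments convert the exponential moment of a time integral into precisely the quantity controlled by Lemma \ref{lemma3-1}, the rest is routine. The only point requiring care is aligning the constants $6q^2 T_2\leq \sigma$ and $T_2\leq T_1$, which is exactly the role of the definition of $T_2$.
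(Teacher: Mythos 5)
Your proof is correct and follows essentially the same route as the paper: Jensen in time, then domination of $\e^{cG^{\varepsilon}}$ by $\eta_{\varepsilon}\ast\e^{cG}$ via two further convexity arguments, and finally Lemma \ref{lemma3-1} with $p=1$. In fact you spell out the mollifier-Jensen step ($G^{\varepsilon}\leq\eta_{\varepsilon}\ast G$ and then $\e^{\sigma G^{\varepsilon}}\leq\eta_{\varepsilon}\ast\e^{\sigma G}$) that the paper states without justification, so your write-up is, if anything, more complete.
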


\begin{proof}
By Jesen's inequality, 
\begin{eqnarray*}
\sup_{x \in S} \E \Big[\e^{6q^2\int_0^{T}G^{\varepsilon}
(\xi_s^{\varepsilon}(x))ds} \Big]
&\leqslant& \frac{1}{T} \sup_{x \in S} \E \Big[\int_0^T\e^{6Tq^2G^{\varepsilon}
(\xi_s^{\varepsilon}(x))}ds \Big]\\
&\leqslant&  \frac{1}{T} \sup_{x \in S} 
\E \Big[\int_0^T \big(\eta_{\varepsilon}\ast \e^{6Tq^2G}\big)
(\xi_s^{\varepsilon}(x))ds \Big]
\end{eqnarray*}
the function $\e^{6Tq^2G}$ satisfies (\ref{N7-1}) with power parameter $p=1$, 
when $T\leqslant \frac{\sigma} {6q^2}$, then by Lemma \ref{lemma3-1} for 
$ T_2:=\min(T_1,\frac{\sigma} {6q^2})$, 
\begin{equation*}
\sup_{\varepsilon<\varepsilon_0}\sup_{x \in S} 
\E \Big[\int_0^{T_2} \big(\eta_{\varepsilon}\ast \e^{6Tq^2G}\big)
(\xi_s^{\varepsilon}(x))ds \Big]
\leqslant  \sup_{\varepsilon<\varepsilon_0}\sup_{x \in S} 
\E \Big[\int_0^{T_1} \big(\eta_{\varepsilon}\ast \e^{\sigma G}\big)
(\xi_s^{\varepsilon}(x))ds \Big]<\infty. 
\end{equation*}
which implies the conclusion of the lemma.
\end{proof}

\begin{remark}
In fact, in integrable condition (\ref{N17}), if we replace the function
$\e^{\sigma G(y)}$ by $F(G(y))$, where $F:\R^{+}\rightarrow \R^{+}$ is a convex non-negative function, then a corresponding uniformly integrability conclusion  holds for $F(G)$. 
\end{remark}

\subsection{Convergence of derivative flows}

Next we  consider the convergence of the derivative flows.  Since each $A_l^{\varepsilon}$ is smooth and  globally Lipschitz continuous, for each $\varepsilon$ there  is a smooth global solution flow $\xi_t^{\varepsilon}(x,\omega)$ to  SDE (\ref{e2}).
 For $x \in \R^n$, let
$$V_t^{\varepsilon}(x)=D_x \xi_t^{\varepsilon}$$
be the space derivative of $\xi_t^{\varepsilon}$. Then $V_t^{\varepsilon}$ satisfies
the following SDE
\begin{equation}\label{e3}
V_t^{\varepsilon}(x)= \mathbf{I}+\sum_{l=1}^{m}\int_0^t DA_l^{\varepsilon}(\xi_s^{\varepsilon}(x))
(V_s^{\varepsilon}(x))dW_s^l+ \int_0^tDA_0^{\varepsilon}(\xi_s^{\varepsilon}(x))(V_s^{\varepsilon}(x))ds.
\end{equation}
We  prove the following uniform moment estimate for $V_t^{\varepsilon}$.
\begin{lemma}\label{Lem N4}
We assume the same condition as that in Lemma \ref{Lem N3} . Then for each $p>0$, 
there is a constant $T_3:=\min(T_1,{\sigma \over 6p^2})$, such that for all 
$0\leqslant T \leqslant T_3$ and bounded subset $S$ of  $\R^n$,   
\begin{equation}\label{N19}
\sup_{\varepsilon<\varepsilon_0}\sup_{x \in S} \E \Big[\sup_{0\leqslant s \leqslant T}|V_s^{\varepsilon}(x)|^p \Big]<\infty.
\end{equation}
\end{lemma}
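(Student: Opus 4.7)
The plan is to exploit the linear structure of the SDE (\ref{e3}) for $V_t^\varepsilon$ and to reduce the moment bound to the uniform exponential integrability supplied by Lemma \ref{Lem N3}. First I would fix a unit vector $v\in\R^n$ and set $M_t := V_t^\varepsilon(x)\,v$, which satisfies the linear SDE
\[
dM_t = \sum_{l=1}^m DA_l^\varepsilon(\xi_t^\varepsilon(x))\, M_t\, dW_t^l + DA_0^\varepsilon(\xi_t^\varepsilon(x))\, M_t\, dt, \qquad M_0 = v.
\]
Applying It\^o's formula to $(|M_t|^2+\delta)^{p/2}$ and letting $\delta\downarrow 0$ (using that the linearity of the SDE keeps $M_t\neq 0$ almost surely) gives the logarithmic differential
\[
\frac{d|M_t|^p}{|M_t|^p} = p\sum_{l=1}^m \alpha_l(t)\, dW_t^l + \beta(t)\, dt,
\]
where $\alpha_l(t) = \langle M_t, DA_l^\varepsilon(\xi_t^\varepsilon) M_t\rangle/|M_t|^2$ satisfies $\sum_l \alpha_l(t)^2 \le G^\varepsilon(\xi_t^\varepsilon(x))$ and a Cauchy--Schwarz estimate gives $|\beta(t)| \le c(p)\, G^\varepsilon(\xi_t^\varepsilon(x))$.

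Integrating yields the multiplicative representation
\[
|M_t|^p = |v|^p\, \mathcal{E}(N)_t \exp\!\Bigl(\textstyle\int_0^t \gamma(s)\, ds\Bigr),
\]
with $N_t = p\sum_l \int_0^t \alpha_l\, dW^l$, $\langle N\rangle_t \le p^2 \int_0^t G^\varepsilon(\xi_s^\varepsilon)\, ds$, $|\gamma(s)| \le c'(p)\, G^\varepsilon(\xi_s^\varepsilon)$, and $\mathcal{E}(N)_t = \exp(N_t - \tfrac12 \langle N\rangle_t)$. Taking suprema and applying Cauchy--Schwarz,
\[
\E\sup_{t\le T}|M_t|^p \le |v|^p\bigl(\E\sup_{t\le T}\mathcal{E}(N)_t^2\bigr)^{1/2}\bigl(\E\exp\!\bigl(2c'(p)\textstyle\int_0^T G^\varepsilon(\xi_s^\varepsilon)\, ds\bigr)\bigr)^{1/2}.
\]
The second factor is uniformly controlled by Lemma \ref{Lem N3}, applied with a suitable choice of $q$, as soon as $T \le \sigma/(6q^2)$. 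For the first factor I would use the identity $\mathcal{E}(N)_t^2 = \mathcal{E}(2N)_t\exp(\langle N\rangle_t)$; Novikov's criterion, itself a consequence of Lemma \ref{Lem N3}, promotes $\mathcal{E}(2N)$ to a genuine martingale, so Doob's $L^2$ inequality together with one more Cauchy--Schwarz split reduces everything to finitely many exponential moments $\E\exp\bigl(c_k p^2 \int_0^T G^\varepsilon(\xi_s^\varepsilon)\, ds\bigr)$ with explicit constants $c_k$.

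The threshold $T_3 = \min(T_1, \sigma/(6p^2))$ is the one that makes all of these moments simultaneously finite uniformly in $\varepsilon<\varepsilon_0$ and $x\in S$ via Lemma \ref{Lem N3}. Since $|V_t^\varepsilon(x)|$ is equivalent, up to a dimensional constant, to $\max_j |V_t^\varepsilon(x)\, e_j|$ on a fixed orthonormal basis $(e_j)$, the bound obtained for each $M_t = V_t^\varepsilon(x)\, e_j$ yields (\ref{N19}). The main obstacle is the estimate of $\E\sup_{t\le T}\mathcal{E}(N)_t^2$: because $\mathcal{E}(N)$ is only a positive local (super)martingale in general, Doob's inequality cannot be invoked directly, and the cascade of Cauchy--Schwarz / Doob splits must be shown to terminate; it does so precisely because every exponent of $\int_0^T G^\varepsilon\, ds$ produced along the way stays of order $p^2$, which is the regime in which Lemma \ref{Lem N3} applies for $T \le \sigma/(6p^2)$.
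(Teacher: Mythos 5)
Your proposal follows the same strategy as the paper: derive the Dol\'eans-Dade exponential representation (the paper's equation (\ref{N18}), which they extract from Li \cite{Li1}) via It\^o's formula, bound the drift $a_t^{p,\varepsilon}$ and the quadratic variation $\langle M^{p,\varepsilon}\rangle_t$ by $\int_0^t G^\varepsilon(\xi_s^\varepsilon)\,ds$, and invoke Lemma \ref{Lem N3} for the exponential moments. That is exactly what the paper does, deferring the final technicalities to Theorem 5.1 of \cite{Li1}.

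The gap is in your finishing argument. Writing $\mathcal{E}(N)_t^2 = \mathcal{E}(2N)_t\exp(\langle N\rangle_t)$ and then iterating Cauchy--Schwarz and Doob does \emph{not} terminate on its own: each pass produces a new stochastic exponential $\mathcal{E}(2^kN)$ and the coefficient in front of $\langle N\rangle$ multiplies; you acknowledge this at the end but do not identify what actually stops the recursion. The correct finishing move, and the one that yields the precise constant $6p^2$ appearing in $T_3$, is to apply Doob's $L^2$ maximal inequality once to $\mathcal{E}(N)$ itself (made a genuine martingale by Novikov, which follows from Lemma \ref{Lem N3}), and then to estimate the terminal second moment by the decomposition
\begin{equation*}
\mathcal{E}(N)_T^2 \;=\; \exp\!\big(2N_T - \langle N\rangle_T\big)
\;=\; \mathcal{E}(4N)_T^{1/2}\,\exp\!\big(3\langle N\rangle_T\big),
\end{equation*}
followed by Cauchy--Schwarz and the supermartingale bound $\E\,\mathcal{E}(4N)_T\leqslant 1$, giving
\begin{equation*}
\E\,\mathcal{E}(N)_T^2 \;\leqslant\; \big(\E\exp(6\langle N\rangle_T)\big)^{1/2}
\;\leqslant\; \big(\E\exp\!\big(6p^2\!\int_0^T G^\varepsilon(\xi_s^\varepsilon)\,ds\big)\big)^{1/2}.
\end{equation*}
This is finite uniformly in $\varepsilon$ and $x\in S$ precisely for $T\leqslant T_3=\min(T_1,\sigma/6p^2)$ by Lemma \ref{Lem N3} with $q=p$, and the drift contribution $\exp(2a_T^{p,\varepsilon})$ needs only $\E\exp(p(p+3)\int_0^T G^\varepsilon\,ds)$, which is dominated. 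Your detour through $\mathcal{E}(2N)$, even if terminated with the same trick, produces a larger constant (of the order $24p^2$), so it would only establish the bound on a shorter interval than the stated $T_3$; this would alter the lemma's conclusion as written. Repair the finish by applying the Doob--H\"older--supermartingale sequence directly to $\mathcal{E}(M^{p,\varepsilon})$ and your argument matches the paper's.
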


\begin{proof} 
For simplicity we omit the starting point $x$ in $V_t^\varepsilon(x)$. For  SDE (\ref{e2}), with smooth coefficients, it holds that for all $p>1$,  see the analysis in \cite{Li1},
 \begin{equation}\label{N18}
|V_t^{\varepsilon}|^p=\e^{M_t^{p,\varepsilon}-\frac{\langle
M^{p,\varepsilon},M^{p,\varepsilon}\rangle_t}{2}+a_t^{p,\varepsilon}} 
\end{equation}
where 
\begin{equation*}
M_t^{p,\varepsilon}=\sum_{l=1}^{m}p\int_0^t
\frac{\langle DA_l^{\varepsilon}(V_s^{\varepsilon}),V_s^{\varepsilon}\rangle}{|V_s^{\varepsilon}|^2} 
dW_s^l,\ a_t^{p,\varepsilon}=\frac{p}{2}\int_0^t
\frac{H_p^{\varepsilon}(\xi_s^{\varepsilon})(V_s^{\varepsilon},V_s^{\varepsilon})}{|V_s^{\varepsilon}|^2}
\end{equation*}
and 
\begin{equation}\label{N16}
H_p^{\varepsilon}(v,v)=2\langle DA_0^{\varepsilon}(v), v\rangle
+\sum_{l=1}^{m}|DA_l^{\varepsilon}(v)|^2+(p-2)\sum_{l=1}^{m} 
\frac{\langle DA_l^{\varepsilon}(v),v\rangle^2}{|v|^2}.
\end{equation}
This follows from an It\^o formula applied to the function $|-|^p$ and  to the stochastic process $V_t^\varepsilon$. See Elworthy's book \cite{Elworthy} for a nice It\^o formula. 

Let $G^{\varepsilon}(x)=\sum_{l=0}^{m}|DA_l^{\varepsilon}(x)|^2$. Note that
$$H^{\varepsilon}_p(x)(v,v)\leqslant (p+3) G^{\varepsilon}(x)+C.$$ By Lemma 
\ref{Lem N3}, for $T_3:=\min(T_1,{\sigma \over 6p^2})$, 
\begin{equation*}
\sup_{\varepsilon<\varepsilon_0}\sup_{x \in S} \E \Big[\e^{6p^2\int_0^{T_3}G^{\varepsilon}
(\xi_s^{\varepsilon}(x))ds} \Big]<\infty  
\end{equation*}
According to the proof of Theorem 5.1 in \cite{Li1}, for any bounded set $S$ in $\R^n$, 
\begin{equation*}
\sup_{\varepsilon<\varepsilon_0}\sup_{x \in S} \E \Big[\sup_{0\leqslant s \leqslant T_3}|V_s^{\varepsilon}(x)|^p \Big]
\leqslant C \sup_{\varepsilon<\varepsilon_0} \sup_{x \in S} \E \Big[\e^{6p^2\int_0^{T_3}G(\xi_s^{\varepsilon}(x))ds} \Big]<\infty 
\end{equation*}
Here $C$ is a constant only depending on $n$ and $p$, not on $\varepsilon$.
\end{proof}

\begin{remark}
\label{remark-G}
The  condition (\ref{N17}) used in the Lemma is a little stronger than it is needed for the uniformly moment estimate for $V_s^{\varepsilon}(x)$.
 In fact, let $F_1(x):=\sup_{|v|=1}\langle DA_0^{\varepsilon}(v), v\rangle(x)$
and  $F_2(x):=\sum_{l=1}^m |DA_l(x)|^2$, we have for any $\varepsilon>0$, 
\begin{equation*}
\begin{split}
&\sup_{|v|=1}\langle DA_0^{\varepsilon}(x)(v), v\rangle
=\sup_{|v|=1}\langle \int_{\R^n}\eta_{\varepsilon}(x-y)DA_0(y)(v)dy, v\rangle\\
&\leqslant \int_{\R^n}\eta_{\varepsilon}(x-y)\sup_{|v|=1}\langle DA_0(y)(v)dy, v\rangle
\leqslant \eta_{\varepsilon}*F_1(x)
\end{split}
\end{equation*}
So by Jensen's inequality, from (\ref{N16}) we see,
\begin{equation*}
\sup_{|v|=1}H_p^{\varepsilon}(v,v) \leqslant \eta_{\varepsilon}*\big( F_1(x)+F_2(x)\big)
\end{equation*}
So by (\ref{N18}), H\"older inequality and Jesen's inequality, if the following condition holds, 
\begin{equation}\label{l17}
\sup_{x \in S}\int_0^{T_0}\int_{\R^n}\e^{\sigma F_i(y)}K_s(x,y)dyds<\infty, \ \ i=1,2 
\end{equation}
 we  obtain the uniformly moment estimate for $V_s^{\varepsilon}(x)$ at some small time interval.

Note that in condition (\ref{l17}), we only need the one-side bound of $DA_0$, which is weaker than the two-side bound condition (\ref{N17}).
\end{remark}

\begin{thm}\label{th-derivative}
Suppose the Assumption \ref{N1}  and condition 
(\ref{N17}) holds,
Then for each $p>0$ , there is a constant $T_4$, such that for
any bounded set $S$ in $\R^n$ and $0\leqslant T \leqslant T_4$   
\begin{equation}\label{N20}
\lim_{\varepsilon, \tilde{\varepsilon}\rightarrow 0}\sup_{x \in S}\E
\sup_{0\leqslant s \leqslant T }|V_s^{\varepsilon}(x)-V_s^{\tilde{\varepsilon}}(x)|^p=0.
\end{equation}
\end{thm}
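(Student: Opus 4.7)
The plan is to view the difference $R_t := V_t^\varepsilon - V_t^{\tilde\varepsilon}$ as the solution of an inhomogeneous linear SDE whose homogeneous part is governed by $V^\varepsilon$ and whose forcing vanishes with $\varepsilon, \tilde\varepsilon$ thanks to Lemma \ref{lemma3}. Subtracting the two instances of (\ref{e3}) and writing
\begin{equation*}
DA_l^\varepsilon(\xi^\varepsilon_s)V^\varepsilon_s - DA_l^{\tilde\varepsilon}(\xi^{\tilde\varepsilon}_s)V^{\tilde\varepsilon}_s = DA_l^\varepsilon(\xi^\varepsilon_s)R_s + \Delta_l(s)V^{\tilde\varepsilon}_s,
\end{equation*}
with $\Delta_l(s) := DA_l^\varepsilon(\xi^\varepsilon_s) - DA_l^{\tilde\varepsilon}(\xi^{\tilde\varepsilon}_s)$, yields
\begin{equation*}
dR_t = \sum_{l=1}^m DA_l^\varepsilon(\xi^\varepsilon_t) R_t \, dW^l_t + DA_0^\varepsilon(\xi^\varepsilon_t) R_t \, dt + \sum_{l=1}^m \Delta_l(t) V^{\tilde\varepsilon}_t \, dW^l_t + \Delta_0(t) V^{\tilde\varepsilon}_t \, dt,\quad R_0 = 0.
\end{equation*}
Since all coefficients are smooth, the fundamental solution of the homogeneous part is precisely $V_t^\varepsilon$ (with smooth inverse). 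Variation of constants (with the appropriate It\^o correction) then gives
\begin{equation*}
R_t = V_t^\varepsilon \int_0^t (V_s^\varepsilon)^{-1} \Big\{ \big(\Delta_0(s) - \sum_{l=1}^m DA_l^\varepsilon(\xi^\varepsilon_s)\Delta_l(s)\big) V^{\tilde\varepsilon}_s \, ds + \sum_{l=1}^m \Delta_l(s) V^{\tilde\varepsilon}_s \, dW^l_s \Big\}.
\end{equation*}

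The inverse $(V_t^\varepsilon)^{-1}$ satisfies a linear SDE of the same type as (\ref{e3}) (with coefficients involving $-DA_l^\varepsilon$ and $\sum_k (DA_k^\varepsilon)^2$), so the exact argument of Lemma \ref{Lem N4} applies and yields the uniform moments $\sup_\varepsilon \sup_{x \in S}\E\sup_{s\le T}|(V_s^\varepsilon)^{-1}|^q < \infty$ on a suitably small time interval. Combining this with Lemma \ref{Lem N4} for $V^\varepsilon$ and $V^{\tilde\varepsilon}$, the Burkholder--Davis--Gundy inequality, and successive applications of H\"older's inequality in the variation-of-constants representation reduces the problem, for a suitable $T_4$ and some exponent $p_0 > 0$, to showing
\begin{equation*}
\lim_{\varepsilon,\tilde\varepsilon \to 0} \sup_{x\in S}\E\!\int_0^{T_4}|\Delta_l(s)|^{q}\, ds = 0, \qquad l = 0,\dots,m,
\end{equation*}
for a suitable $q$. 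This follows by the triangle inequality $|\Delta_l(s)| \le |DA_l^\varepsilon(\xi^\varepsilon_s) - DA_l(\xi_s)| + |DA_l(\xi_s) - DA_l^{\tilde\varepsilon}(\xi^{\tilde\varepsilon}_s)|$ and two applications of Lemma \ref{lemma3} to $g = DA_l$: condition (\ref{N6}) is provided by Assumption \ref{N1}(3) ($DA_l \in L^{2n}_{\loc}$), and condition (\ref{N7}) is implied by the exponential integrability (\ref{N17}) since any polynomial is dominated by an exponential.

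The integrability constraint $q(1+\delta)n/2 \le 2n$ imposed by Lemma \ref{lemma3} forces us to prove convergence first only for small exponents $p = p_0$. To pass to arbitrary $p > 0$ I would use H\"older/Jensen for $p \le p_0$ and the interpolation $\E|R_t|^p \le (\E|R_t|^{p_0})^{\theta}(\E|R_t|^{p_1})^{1-\theta}$ for $p_0 < p < p_1$, the $p_1$-moments of $V^\varepsilon$ and $V^{\tilde\varepsilon}$ being bounded uniformly in $\varepsilon$ by Lemma \ref{Lem N4} on a horizon $T_3(p_1)$; the final $T_4$ is taken as the minimum of $T_1$ and of all $T_3(p_1)$ appearing in the argument.

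The main obstacle is that $DA_l^\varepsilon$ are neither uniformly bounded nor uniformly Lipschitz, so no deterministic Gronwall estimate closes. The exponential integrability (\ref{N17}) is used precisely to tame the random ``Lipschitz constants'' $G^\varepsilon(\xi^\varepsilon_s)$ of the homogeneous part via the exponential-martingale representation (\ref{N18}) implicit in Lemma \ref{Lem N4} and its analogue for $(V^\varepsilon)^{-1}$, after which only the difference $\Delta_l$ needs to vanish, which is exactly what Lemma \ref{lemma3} delivers. The technical heart of the argument lies in matching the various H\"older exponents so as to respect simultaneously the smallness requirement from (\ref{N17}), the uniform moment bounds of Lemma \ref{Lem N4}, and the restriction $DA_l \in L^{2n}_{\loc}$ from Assumption \ref{N1}(3).
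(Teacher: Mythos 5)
Your proposal is correct in essence, but it takes a genuinely different route from the paper. You use a variation-of-constants representation $R_t = V_t^\varepsilon\int_0^t (V_s^\varepsilon)^{-1}\{\cdots\}$ for the difference $R_t = V_t^\varepsilon - V_t^{\tilde\varepsilon}$, which requires as an extra ingredient uniform moment bounds for $(V_t^\varepsilon)^{-1}$; you then pull everything apart by H\"older and reduce to $\int_0^T\E|\Delta_l(s)|^q\,ds \to 0$, which follows from Lemma~\ref{lemma3}. The paper instead works directly on the difference: after BKG and H\"older it splits $\beta_l^\varepsilon V^\varepsilon - \beta_l^{\tilde\varepsilon}V^{\tilde\varepsilon}$ into $\beta_l^\varepsilon(V^\varepsilon-V^{\tilde\varepsilon})$ plus $(\beta_l^\varepsilon-\beta_l^{\tilde\varepsilon})V^{\tilde\varepsilon}$, truncates the first coefficient at a level $N$, closes the Gronwall loop with a factor $\e^{CN^2 T}$, and handles the $\{|\beta_l^\varepsilon|>N\}$ tail via the exponential Chebyshev bound coming from (\ref{N17}); the small-time constraint $CT_4 < L/(2p)$ emerges precisely by balancing $\e^{CN^2 T}$ against $\e^{-LN^2/(2p)}$ before sending $N\to\infty$. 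The two routes rely on the same two controls: exponential integrability of $G^\varepsilon(\xi_s^\varepsilon)$ to tame the random Lipschitz constants, and Lemma~\ref{lemma3} to kill $\Delta_l$. Your approach avoids Gronwall but must establish moments of the inverse flow; the paper avoids the inverse but pays with the truncation/Gronwall dance. One remark: your interpolation step to pass from small $p_0$ to arbitrary $p$ is not actually needed -- as the paper observes, (\ref{N17}) already makes $|DA_l|$ satisfy (\ref{N6}) and (\ref{N7}) for every power (local integrability of $\e^{\sigma G}$ gives $G\in L^q_{\loc}$ for all $q$), so Lemma~\ref{lemma3} applies to $g=|DA_l|$ with whatever exponent the H\"older bookkeeping demands, with no need to route through the weaker $L^{2n}_{\loc}$ restriction from Assumption~\ref{N1}(3).
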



\begin{proof}
We  only need to consider the case of $p\geqslant 2$. For simplicity, we use $\beta_l^{\varepsilon}(s)$, $\beta_l^{\tilde{\varepsilon}}(s)$ 
to denote $DA_l^{\varepsilon}(\xi_s^{\varepsilon}(x))$ 
and $DA_l^{\tilde{\varepsilon}}(\xi_s^{\tilde{\varepsilon}}(x))$, and 
the constants $C$ may appears in the computation 
from line to line and depend only on $K,\alpha, M,\theta,\delta,n,\tilde{T},p,\sigma$.
 Let $\hat{T}_1$ be the constant  $T_3$  in Lemma \ref{Lem N4} for the power parameter $2p$.
By  SDE (\ref{e4}), for any $T<\hat{T}_1$, we have, 
\begin{equation*}
\begin{split}	
&\Big(\E\sup_{0\leqslant s \leqslant T}\left|V^{\varepsilon}_s(x)-V_s^{\tilde{\varepsilon}}(x)
\right|^p\Big)^{\frac{2}{p}}\\
&\leqslant C\sum_{l=1}^m \Big(\E\sup_{0\leqslant s \leqslant T} \left |   \int_0^s 
\beta_l^{\varepsilon}(u)
(V_u^{\varepsilon}(x))-\beta_l^{\tilde{\varepsilon}}(u)
(V_u^{\tilde{\varepsilon}}(x)) dB_u^l \right |^p\Big)^{\frac{2}{p}}\\
&+C \Big(\E\sup_{0\leqslant s \leqslant T}\left|\int_0^s 
\beta_0^{\varepsilon}(u)
(V_u^{\varepsilon}(x))-\beta_0^{\tilde{\varepsilon}}(u)
(V_u^{\tilde{\varepsilon}}(x))ds \right|^p\Big)^{\frac{2}{p}}\\
&\leqslant C\sum_{l=0}^m \Big(\E\Big[\int_0^T |\beta_l^{\varepsilon}(s)
(V_s^{\varepsilon}(x))-\beta_l^{\tilde{\varepsilon}}(s)
(V_s^{\tilde{\varepsilon}}(x))|^2 ds\Big]^{\frac{p}{2}}\Big)^{\frac{2}{p}}\\
&\leqslant C\sum_{l=0}^m \int_0^T \Big(\E |\beta_l^{\varepsilon}(s)
(V_s^{\varepsilon}(x))-\beta_l^{\tilde{\varepsilon}}(s)
(V_s^{\tilde{\varepsilon}}(x))|^p\Big)^{\frac{2}{p}}ds,.
\end{split}\end{equation*}
where the second step of above inequality is due to 
BKG inequality and H\"older inequality, the third step 
is due to the inequality $\Big(\E|\int_0^T|f_s|ds|^p\Big)^{\frac{1}{p}}
\leqslant \int_0^T \big(\E|f_s|^p\big)^{\frac{1}{p}}ds$ for measurable function
$f(s,\omega)$ when $p\geqslant 1$. Now splitting up the terms,
\begin{equation}\label{N21}
\begin{split}	
&\Big(\E\sup_{0\leqslant s \leqslant T}\left|V^{\varepsilon}_s(x)-V_s^{\tilde{\varepsilon}}(x)
\right|^p\Big)^{\frac{2}{p}}\\
&\leqslant C\sum_{l=0}^m \int_0^T \Big(\E
|\beta_l^{\varepsilon}(s)
(V_s^{\varepsilon}(x))-\beta_l^{\varepsilon}(s)
(V_s^{\tilde{\varepsilon}}(x))|^p \Big)^{\frac{2}{p}}ds\\
&+ C\sum_{l=0}^m  \int_0^T
\Big(\E|\beta_l^{\varepsilon}(s)
(V_s^{\tilde{\varepsilon}}(x))-\beta_l^{\tilde{\varepsilon}}(s)
(V_s^{\tilde{\varepsilon}}(x))|^p\Big)^{\frac{2}{p}} ds\\
&\leqslant CN^2 \int_0^T \Big(\E\sup_{0\leqslant u \leqslant s}
|V_u^{\varepsilon}(x)-V_u^{\tilde{\varepsilon}}(x)|^p\Big)^{\frac{2}{p}} ds\\
&+ 
C\sup_{\varepsilon< \varepsilon_0}
\Big(\E\Big[\sup_{0\leqslant s \leqslant \hat T_1}|V_s^{\varepsilon}|^{2p}\Big]\Big)^{\frac{1}{p}}
\Big(\sum_{l=0}^m \big(\int_0^T\E|\beta^{\varepsilon}_l(s)|^{4p}ds\big)^{\frac{1}{2p}}
\big(\int_0^T \p(|\beta_l^{\varepsilon}(s)|>N)ds\big)^{\frac{1}{2p}}\Big)\\
&+ C\sup_{\varepsilon< \varepsilon_0}
\Big(\E\Big[\sup_{0\leqslant s \leqslant \hat T_1}|V_s^{\varepsilon}|^{2p}\Big]\Big)^{\frac{1}{p}}\Big(\sum_{l=0}^m  
\big(\int_0^T
\E|\beta_l^{\varepsilon}(s)
-\beta_l^{\tilde{\varepsilon}}(s)|^{2p}
ds\big)^{\frac{1}{p}}\Big).
\end{split}	
\end{equation}
The last step is due to 
H\"older's inequality. Note that by condition  
(\ref{N17}), we can find a constant $L>0$ (depends on $M,\theta,n,\tilde{T},\sigma$), such that
the function $g=\e^{L|DA_l|^2}$ satisfies the condition (\ref{N7-1}) with 
power parameter $p=1$. And then $|DA_l|$ satisfies the conditions (\ref{N6}) and (\ref{N7}) 
for any power $p\geqslant 1$, 
so by Lemma \ref{lemma3-1} and Lemma \ref{lemma3}, there are constants $\hat{T}_2$ depending on
$K,\alpha, M,\theta,n,\tilde{T},T_{0},\sigma$, such that for any bounded set $S$, 
\begin{equation*}
\begin{split}
\sup_{\varepsilon<\varepsilon_0}\sup_{x \in S}\int_0^{\hat{T}_2}
\E \e^{L|\beta^{\varepsilon}_l(s)|^2}ds \leqslant
\sup_{\varepsilon<\varepsilon_0}\sup_{x \in S}\int_0^{\hat{T}_2}
\E \Big[\eta_{\varepsilon}*\e^{L|DA_l|^2} (\xi_s^{\varepsilon}(x))\Big]ds< 
\infty
\end{split} 
\end{equation*}

\begin{equation}\label{N23}
\sup_{\varepsilon<\varepsilon_0}\sup_{x \in S}\int_0^{\hat{T}_2} \E
|\beta^{\varepsilon}_l(s)|^{4p} ds< \infty
\end{equation}
\begin{equation}\label{N24}
\lim_{\varepsilon, \tilde{\varepsilon}\rightarrow 0}\sup_{x \in S}\int_0^{\hat{T}_2}
\E|\beta_l^{\varepsilon}(s)
-\beta_l^{\tilde{\varepsilon}}(s)|^{2p}ds=0
\end{equation}

Then by the Chebeshev inequality, for each $\varepsilon <\varepsilon_0$, 
\begin{equation}\label{N22}
\int_0^{\hat{T}_2} \p(|\beta_l^{\varepsilon}(s)|>N)ds\leqslant
\int_0^{\hat{T}_2} \frac{\E \e^{L|\beta^{\varepsilon}_l(s)|^2}}{\e^{LN^2}} ds
\leqslant \frac{C}{\e^{L N^2}}
\end{equation}
So put (\ref{N19}), (\ref{N22}) and (\ref{N23}) into (\ref{N21}),
when $T<\min(\hat{T}_1, \hat{T}_2)$ we derive,
\begin{equation*}
\begin{split}
&\Big(\E\sup_{0\leqslant s \leqslant T}\left|V^{\varepsilon}_s(x)-V_s^{\tilde{\varepsilon}}(x)
\right|^p\Big)^{\frac{2}{p}}
\leqslant CN^2 \int_0^T \Big(\E\sup_{0\leqslant u \leqslant s}
|V_u^{\varepsilon}(x)-V_u^{\tilde{\varepsilon}}(x)|^p\Big)^{\frac{2}{p}} ds\\
&+\frac{C}{\e^{LN^2/2p}}+C\Big(\sum_{l=0}^m  
\big(\int_0^T
\E|\beta_l^{\varepsilon}(s)
-\beta_l^{\tilde{\varepsilon}}(s)|^{2p}
ds\big)^{\frac{1}{p}}\Big)
\end{split}	
\end{equation*}

By Gronwall lemma, let 
$\alpha^{\varepsilon,\tilde{\varepsilon}}(T,x):= \sum_{l=0}^m  
\big(\int_0^T
\E|\beta_l^{\varepsilon}(s)
-\beta_l^{\tilde{\varepsilon}}(s)|^{2p}
ds\big)^{\frac{1}{p}}$, we have for any $T<\min(\hat{T}_1, \hat{T}_2)$, 
\begin{equation*}
\begin{split}
&\Big(\E\sup_{0\leqslant s \leqslant T}|V^{\varepsilon}_s(x)-V_s^{\tilde{\varepsilon}}(x)|^p\Big)^{\frac{2}{p}}\\
&\leqslant \frac{C}{\e^{LN^2/2p}}+ \alpha^{\varepsilon,\tilde{\varepsilon}}(T,x)+\int_0^T \e^{CN^2(t-s)}
\Big(\frac{C}{\e^{LN^2/2p}}+\alpha^{\varepsilon,\tilde{\varepsilon}}(s,x)\Big)ds\\
&\leqslant  \alpha^{\varepsilon,\tilde{\varepsilon}}(T,x)+\e^{CN^2T}\int_0^T \Big(\frac{C}{\e^{LN^2/2p}}+\alpha^{\varepsilon,\tilde{\varepsilon}}(s,x)\Big)ds
\end{split}
\end{equation*}
Since by (\ref{N24}), $\lim_{\varepsilon,\tilde{\varepsilon} \rightarrow 0}\sup_{x \in S}\alpha^{\varepsilon,\tilde{\varepsilon}}(T,x)=0$,
first let $\varepsilon$ tend to $0$, then $N$ tend to infinity in above inequality, so we can find a 
constant $T_4>0$(take $CT_4<\frac{L}{2p}$ in above inequality), such that for any $T\leqslant T_4$, 
\begin{equation*}
\lim_{\varepsilon,\tilde{\varepsilon} \rightarrow 0}\sup_{x \in S}\E\sup_{0\leqslant s \leqslant T}|V^{\varepsilon}_s(x)-V_s^{\tilde{\varepsilon}}(x)|^p 
=0
\end{equation*}

\end{proof}

By Theorem \ref{th-derivative}, we know $V_t^{\varepsilon}$ is a Cauchy sequence in the 
space $L^p(\p)$, so there is a limit process, in fact we have the following.

\begin{thm}
\label{Th2}

Suppose  Assumption \ref{N1}  and condition 
(\ref{N17}) hold, then there exists a process $V_t(x), 0\leqslant t < \infty$, such that 
for each  $p>0$, there is a $T_4>0$ as in Lemma \ref{Lem N3}, 
\begin{equation*}
\lim_{\varepsilon\rightarrow 0}\sup_{x \in S}
\E\sup_{0\leqslant s \leqslant T_4}|V_s^{\varepsilon}(x)-V_s(x)|^p=0 
\end{equation*}
hols for any bounded set $S$ in $\R^n$. 
Furthermore, the process $V_t(x)$ is the unique strong solution
of SDE (\ref{e4}) for all $t$.
\end{thm}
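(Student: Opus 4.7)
The plan is to establish the theorem in three stages: first existence of a limit on the small interval $[0,T_4]$, then identification of this limit as a solution of (\ref{e4}) on that interval, then globalization in time together with uniqueness.

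For existence on $[0,T_4]$, I would appeal directly to Theorem \ref{th-derivative}. That theorem asserts that $\{V^\varepsilon_\cdot(x)\}_\varepsilon$ is Cauchy in the $L^p$-norm on $[0,T_4]$, uniformly in $x$ on bounded sets $S\subset \R^n$. Completeness of the Banach space of adapted continuous processes with this norm then yields a limit process $V_\cdot(x)$ with the claimed uniform convergence. By enlarging to a common modification and the Borel--Cantelli lemma along a subsequence $\varepsilon_k\to 0$, one may in fact choose $V$ so that $V^{\varepsilon_k}\to V$ uniformly on $[0,T_4]$ almost surely for each fixed $x$.

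The main step, and the technically delicate one, is to pass to the limit in the SDE (\ref{e3}) to show that $V_t(x)$ satisfies (\ref{e4}) on $[0,T_4]$. Writing $\beta_l^\varepsilon(s)=DA_l^\varepsilon(\xi_s^\varepsilon(x))$ and similarly $\beta_l(s)=DA_l(\xi_s(x))$, I need
\begin{equation*}
\int_0^t \beta_l^\varepsilon(s)(V_s^\varepsilon(x))\,dW_s^l \;\longrightarrow\; \int_0^t \beta_l(s)(V_s(x))\,dW_s^l,\qquad 1\le l\le m,
\end{equation*}
in probability (and similarly for the drift), uniformly on $[0,T_4]$. Splitting the integrand as
\begin{equation*}
\beta_l^\varepsilon(s)(V_s^\varepsilon) - \beta_l(s)(V_s) = \beta_l^\varepsilon(s)(V_s^\varepsilon-V_s) + \bigl(\beta_l^\varepsilon(s)-\beta_l(s)\bigr)(V_s),
\end{equation*}
the first term is controlled by combining the uniform $L^{2p}$-moment bounds on $\beta_l^\varepsilon$ coming from Lemma \ref{lemma3-1} applied to $|DA_l|^{2p}$ (which satisfies the required integrability because of condition (\ref{N17}), by Jensen), with the $L^p$-convergence $V_s^\varepsilon\to V_s$ and Cauchy--Schwarz. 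The second term is handled by Lemma \ref{lemma3} applied to $g=DA_l$ in a sufficiently high power, which gives $\int_0^{T_4}\E|\beta_l^\varepsilon(s)-\beta_l(s)|^{2p}ds\to 0$, combined with the uniform moment estimate for $V_s$ (obtained from Fatou applied to Lemma \ref{Lem N4}). The stochastic integral convergence then follows from the Burkholder--Davis--Gundy inequality. This identifies $V$ as a solution of (\ref{e4}) on $[0,T_4]$.

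For globalization and uniqueness, I would use the Markov property of $(\xi_t)$ to iterate. Given $V$ on $[0,T_4]$, consider starting from $\xi_{T_4}(x)$ at time $T_4$; the analogous construction produces a process $\widetilde V_{T_4+s}$ on $[T_4,2T_4]$ with initial value $\mathbf{I}$, and one sets $V_{T_4+s}(x):=\widetilde V_{T_4+s}\cdot V_{T_4}(x)$. The bounded set $S$ in the assumption causes no trouble here: in a single step one only needs integrability against the Markov kernel at the \emph{random} point $\xi_{T_4}(x)$, and conditioning on $\xi_{T_4}$ reduces everything to the bounded-set estimates of the previous sections. Iterating over $[kT_4,(k+1)T_4]$ defines $V_t$ for all $t\ge 0$. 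Uniqueness of the strong solution of (\ref{e4}) follows from a Gronwall argument: if $V,V'$ are two solutions, then
\begin{equation*}
\E\sup_{s\le t\wedge\tau_R}|V_s-V'_s|^p \le C\int_0^t \E\bigl[\bigl(1+\textstyle\sum_l|DA_l(\xi_u)|^2\bigr)^{p/2}\sup_{u'\le u}|V_{u'}-V'_{u'}|^p\bigr]\,du,
\end{equation*}
and the integrability of $\exp(\sigma G(\xi_u))$ provided by Lemma \ref{Lem N3} (passed to the limit) allows an exponential Gronwall inequality on $[0,T_4]$, iterated step by step. The main obstacle is the stochastic-integral passage to the limit in the second paragraph, since $DA_l$ is only $L^{2n}_{\loc}$ and the integrand is unbounded; everything hinges on using the uniform exponential moment bound (Lemma \ref{Lem N3}) and the tailored convergence Lemma \ref{lemma3} simultaneously on the same short interval $T_4$.
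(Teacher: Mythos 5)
Your overall architecture matches the paper's: Cauchy-ness of $V^\varepsilon$ on $[0,T_4]$ from Theorem \ref{th-derivative} gives a limit, identification of the limit as a solution of (\ref{e4}) via Lemma \ref{lemma3}, and globalization by iterating the Markov property on intervals of length $T_4$. The identification step, which you flag as the delicate one, is exactly the combination of Lemma \ref{lemma3} and Theorem \ref{th-derivative} the paper invokes, and your split of the integrand into $\beta_l^\varepsilon(V^\varepsilon-V)$ plus $(\beta_l^\varepsilon-\beta_l)V$ is a correct way to fill in that detail.

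The place where you diverge from the paper, and where there is a genuine gap, is the uniqueness argument. The inequality you display,
\begin{equation*}
\E\sup_{s\le t\wedge\tau_R}|V_s-V'_s|^p \;\le\; C\int_0^t \E\Big[\big(1+\textstyle\sum_l|DA_l(\xi_u)|^2\big)^{p/2}\sup_{u'\le u}|V_{u'}-V'_{u'}|^p\Big]\,du,
\end{equation*}
does not close under Gronwall: the random unbounded factor $\big(1+\sum_l|DA_l(\xi_u)|^2\big)^{p/2}$ sits inside the expectation multiplied against the running supremum, and the exponential integrability of $G(\xi_u)$ does not let you pull it out. Decoupling by H\"older pushes you to a higher $L^q$-norm of the supremum, which breaks the Gronwall structure, and the only way to restore it is precisely the truncation-at-level-$N$ scheme of the paper: replace the coefficient by $\min(|\beta_l|,N)$, apply Gronwall on the truncated piece (with constant $\sim N^2$), and control the truncation error using the tail bound $\int_0^T\p(|\beta_l(s)|>N)ds\le C e^{-LN^2}$ from (\ref{N22}), letting $N\to\infty$ only after choosing $T_4$ small enough that the $e^{CN^2T}$ factor from Gronwall is dominated. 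This is exactly what the paper does: it notes that the exponential It\^o representation (\ref{N18}) holds for any strong solution of (\ref{e4}), so that the entire Cauchy argument of Theorem \ref{th-derivative} --- truncation, Gronwall, and $N\to\infty$ --- can be re-run with two arbitrary solutions $V,\tilde V$ in place of $V^\varepsilon,V^{\tilde\varepsilon}$. If you insist on an ``exponential Gronwall'' route, the correct object is the nonnegative local supermartingale $e^{-\lambda\int_0^t G(\xi_u)\,du}|V_t-V'_t|^p$ (for $p\ge 2$ and $\lambda$ large), whose supermartingale property gives $V\equiv V'$ using only a.s.\ finiteness of $\int_0^t G(\xi_u)\,du$; but that is a different inequality from the one you wrote, and you would still need the It\^o-formula argument of (\ref{N18}) to justify it. As written, your uniqueness step does not go through.

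On the globalization step, you correctly note that after one step the initial condition $\xi_{T_4}(x)$ is no longer confined to a bounded set; the paper finesses this because its globalization only needs pathwise uniqueness of the joint system $(\xi_t,V_t)$ on $[0,T_4]$ from an arbitrary (deterministic) starting point together with the Markov property, not uniform-in-$x$ moment bounds for $t>T_4$ --- which, as the paper's own remark after the theorem concedes, may indeed fail.
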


\begin{proof}  We write  $\xi_t^0:=\xi_t$  and $V_t^0:=V_t$.
By Lemma \ref{lemma3} and Theorem \ref{th-derivative}, it is shown that
the limit process $V_t(x)$ is the solution of SDE (\ref{e4}) in some time interval
$0\leqslant t \leqslant  T_4$. This gives the moment estimate  in Lemma \ref{Lem N4} for the case that $\epsilon=0$ on $[0,T_4]$.  In fact a direct computation as that in Lemma \ref{Lem N4}
gives the bound for any strong solution of SDE (\ref{e4}) on $[0,T_4]$. The key observation is that  equation (\ref{N18}) holds for any  strong solution of SDE (\ref{e4}), without further assumptions on the regularity on the vector fields. Hence if $V_t$ and $\tilde{V}_t$ are two solutions of 
SDE (\ref{e4}), the same method used for the proof of Theorem \ref{th-derivative}
gives
\begin{equation*}
\sup_{x \in S}\E\sup_{0\leqslant s \leqslant T_4}|V_s(x)-\tilde{V}_s(x)|^p=0 
\end{equation*}
and $V_t(x)$ is the unique strong solution of SDE (\ref{e4}) in the time interval
$[0,T_4]$. 

If we view SDE (\ref{e1}) and (\ref{e4}) together as 
a system with solution ($\xi_t(x), V_t(x)$) valued in $\R^n\times \R^{n\times n}$.
Let $F_t(x,v_0,\omega):=(\xi_t(x), \langle V_t(x), v_0\rangle v_0)$ which is the solution of 
that system with initial point ($x, v_0$).  
 When $T_4<t\leqslant 2T_4$, let 
 $$F_t(x,v_0,\omega):=
F_{t-T_4}\big(\xi_{\tilde T}(x), V_{T_4}(x),\theta_{T_4}(\omega)\big).$$  Here 
 $\theta_{\tilde T_{0}}(\omega)_t=\omega_{t+T_4}-\omega_{T_4}$ is the shift operator.
By the Markov property and the pathwise uniqueness at time interval
$0\leqslant t \leqslant T_4$ for any initial point $(x,v_0) \in \R^n\times \R^{n\times n}$, one may check that $V_t(x)$ is the solution for 
SDE (\ref{e4}) when $0\leqslant t \leqslant 2T_4$. 
Taking this procedure repeatedly, we obtain a unique
solution to SDE (\ref{e4}) for any time $t$.
\end{proof}
\begin{remark}
In particular, by Lemma \ref{Lem N2}, if we take different versions of weak derivative $DA_l$ in SDE (\ref{e4}), the corresponding solutions $V_s$ are indistinguishable.
\end{remark}
\begin{remark}
In Theorem \ref{Th2},  $V_t(x)$ is shown to belong to $ \in L^p(\p)$ when 
$0\leqslant t \leqslant T_4$. But this may fail when $t>T_4$.   
\end{remark}

\section{The case of locally Lipschitz continuous coefficients} \label{lip}
In a special case that $A_l, 0\leqslant l \leqslant m$ are bounded, global Lipschitz continuous and uniformly
elliptic in 
$\R^n$, the condition (\ref{N17}) are satisfied for every $T_0>0, \sigma>0$. And
for each $T>0$, there exists a unique strong solution of SDE (\ref{e4}) since $DA_l$ is bounded. Step by step
checking the proof of Lemma \ref{lemma3} and Thereom \ref{th-derivative} to determine the time interval, we can 
obtain,

\begin{thm}\label{Th3}
Assume that the coefficients of the SDE  (\ref{e1}) are bounded, Lipschitz continuous and uniformly elliptic.
For each $T>0$, $p>0$ and bounded subset $S$ in $\R^n$, we have
$$\lim_{\varepsilon\rightarrow 0}\sup_{x \in S}\E
\sup_{0\leqslant s \leqslant T }|V_s^{\varepsilon}(x)-V_s(x)|^p=0.$$
\end{thm}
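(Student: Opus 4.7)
The plan is to re-examine the three places in Section 2 where a short-time restriction was imposed and show that each of them evaporates under the present hypotheses. The key observation is that global Lipschitz continuity of $A_l$ implies that its weak derivative $DA_l$ exists and is bounded a.e.\ by the Lipschitz constant $K$, and by Lemma \ref{lemma1} the mollified derivatives $DA_l^\varepsilon$ inherit the same $L^\infty$ bound uniformly in $\varepsilon$. Consequently $G(x):=\sum_l|DA_l(x)|^2$ is bounded, $\e^{\sigma G}$ is a bounded function for every $\sigma>0$, and Condition $G(\sigma,T_0)$ is satisfied for every $\sigma>0$ and every $T_0>0$. This immediately removes every hypothesis-based restriction on the time parameter used in Section 2, and also gives existence and uniqueness of a strong solution to SDE (\ref{e4}) on all of $[0,\infty)$ by classical linear-SDE theory.

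I would then trace through the successive results. In Lemma \ref{Lem N4} the restriction $T_3=\min(T_1,\sigma/(6p^2))$ arose only from applying Lemma \ref{Lem N3} with $q=p$; under the present boundedness of $G$ one has directly
\begin{equation*}
\sup_{\varepsilon<\varepsilon_0}\sup_{x\in S}\E\Big[\e^{6p^2\int_0^T G^\varepsilon(\xi_s^\varepsilon(x))\,ds}\Big]\le \e^{6p^2T\|G\|_\infty}<\infty
\end{equation*}
for every $T$, so the uniform moment bound (\ref{N19}) extends to $[0,T]$ for any $T>0$ and $p>0$. In Lemma \ref{lemma3} applied to $g=DA_l$, conditions (\ref{N6}) and (\ref{N7}) hold automatically with arbitrary exponents and $T_0$; inspection of its proof shows the restriction $T_1=\min(\tilde T, T_0/C_1)$ was imposed solely to invoke the integrability hypothesis, and together with the standard convergence $\xi^\varepsilon\to\xi$ on any $[0,T]$ under bounded Lipschitz coefficients (Lemma \ref{N2}) one obtains $\lim_{\varepsilon\to 0}\sup_{x\in S}\int_0^T\E|DA_l^\varepsilon(\xi_s^\varepsilon(x))-DA_l(\xi_s(x))|^p\,ds=0$ for every $T,p$. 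Finally, in Theorem \ref{th-derivative} the short time $T_4$ was forced by the balance $CT_4<L/(2p)$, which in turn came from truncating $\{|\beta_l^\varepsilon(s)|>N\}$ in (\ref{N21}) and then sending $N\to\infty$ to kill the error $\e^{-LN^2/2p}$. In our setting $|\beta_l^\varepsilon(s)|=|DA_l^\varepsilon(\xi_s^\varepsilon(x))|\le K$ almost surely, so for any fixed $N>K$ the truncation probability $\p(|\beta_l^\varepsilon(s)|>N)$ is identically zero and the second summand in (\ref{N21}) vanishes. The remaining factor $\e^{CN^2T}$ is then a finite constant for each $T$, and Gronwall applied on $[0,T]$ delivers
\begin{equation*}
\Big(\E\sup_{0\le s\le T}|V_s^\varepsilon(x)-V_s^{\tilde\varepsilon}(x)|^p\Big)^{2/p}\le C_T\sum_{l=0}^m\Big(\int_0^T\E|DA_l^\varepsilon(\xi_s^\varepsilon)-DA_l^{\tilde\varepsilon}(\xi_s^{\tilde\varepsilon})|^{2p}\,ds\Big)^{1/p},
\end{equation*}
whose right-hand side tends to $0$ uniformly for $x\in S$ by the extended Lemma \ref{lemma3}. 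This yields the Cauchy property on any $[0,T]$, and identifying the limit with the (now globally) unique strong solution of SDE (\ref{e4}) completes the argument.

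The main obstacle is really only bookkeeping: one must check that the heat-kernel constant $C_1$ in (\ref{N3}) and the various exponential-moment constants depend on $T$ only through a continuous increasing function, with no hidden small-time divergence. These are guaranteed by Lemma \ref{lemma2} (uniform ellipticity of the mollified family), by Lemma \ref{lemma1} (uniform supremum and Lipschitz/H\"older constants of $A_l^\varepsilon$), and by the parabolic estimates already cited in Lemma \ref{Lem N2}. No new idea beyond the instruction to ``step by step check the proof of Lemma \ref{lemma3} and Theorem \ref{th-derivative} to determine the time interval'' is needed.
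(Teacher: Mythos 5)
Your proposal is correct and follows essentially the same route as the paper, which proves Theorem \ref{Th3} precisely by noting that boundedness and global Lipschitz continuity make condition (\ref{N17}) hold for every $\sigma, T_0$ and then "step by step checking the proof of Lemma \ref{lemma3} and Theorem \ref{th-derivative} to determine the time interval." You have simply carried out that checking explicitly — in particular the observation that $|\beta_l^\varepsilon(s)|\leqslant K$ kills the truncation term in (\ref{N21}) and frees the Gronwall constant from the $N\to\infty$ balance — which is exactly the content the paper leaves to the reader.
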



We  extend the approximation results to the elliptic SDE with locally Lipschitz continuous coefficients, in which 
case $A_l$ is still weak differentiable and has a locally bounded version of the derivative. So SDE (\ref{e4}) is
complete if SDE (\ref{e1}) is complete, i.e. non-explode for each fixed starting point.

Denote by $(\rho,\vartheta),\ \rho>0,\ \vartheta \in \mathbb{S}^{n-1}$ the polar 
coordinate in $\mathbf{R}^n$. For any measurable function $f$ on $\mathbf{R}^n$ and integer $N>0$, 
define a function $f^N$ as,
\begin{equation}\label{h1}
f^N(\rho,\vartheta):=\begin{cases}\ f(\rho,\vartheta) &\text{if}\  |\rho|\leqslant N, \\
            f(N,\vartheta) &\text{if}\ |\rho|> N.
\end{cases}
\end{equation}
and $f^N(0)=f(0)$. Suppose that  SDE (\ref{e1}) is complete and  coefficients $A_l$ are locally Lipschitz continuous and elliptic . Set $A_l^N(x)=(A_{l1}^N(x),..,A_{ln}^N(x))$ then  $A_l,\ 1\leqslant l \leqslant m$ are bounded, Lipschitz continuous and
uniformly elliptic. 
Let $\xi_t^N(x)$, $V_t^{N}(x)$ be respectively the solutions to 
SDE (\ref{e1}) and (\ref{e4}) whose coefficients are $A_l^N$ and $DA_l^N$.  Let
$A_l^{N,\varepsilon}$ be the smooth approximation of the vector fields $A_l^N$  derived 
by convolution, as in Section \ref{section2}. We denote the corresponding solution of approximation SDE
(\ref{e2}) and (\ref{e3}) by $\xi_t^{N,\varepsilon}(x)$ and $V_t^{N,\varepsilon}(x)$.

\begin{lemma}\label{Lem l0}
Suppose the coefficients $A_l$ of SDE (\ref{e1}) are locally Lipschitz continuous, 
and of linear growth, then for any $p>0,T>0$ and bounded set $S$ in $\R^n$,
\begin{equation}\label{l0}
\lim_{\varepsilon \rightarrow 0} \sup_{x \in S}
\E\sup_{0\leqslant s \leqslant T}|\xi_s^{\varepsilon}(x)-\xi_s(x)|^p=0, 
\end{equation}
\end{lemma}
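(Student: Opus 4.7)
The plan is a truncation-plus-mollification argument that reduces the locally Lipschitz case to the globally Lipschitz one. Since $A_l$ is only locally Lipschitz, a direct Gronwall estimate on $\xi^\varepsilon-\xi$ is unavailable; instead I would exploit pathwise coincidence on large balls together with uniform moment bounds coming from the linear-growth hypothesis. Using the radial cutoff (\ref{h1}), I set $A_l^N$ and $A_l^{N,\varepsilon}:=\eta_\varepsilon\ast A_l^N$, and denote the corresponding strong solutions by $\xi^N$ and $\xi^{N,\varepsilon}$. Each $A_l^N$ is bounded and globally Lipschitz, so all of these processes exist globally. Since $A_l=A_l^N$ on $\overline B_N$, pathwise uniqueness yields $\xi_s(x)=\xi_s^N(x)$ up to the first exit time of $\xi$ from $B_N$; more delicately, since $\eta_\varepsilon$ is supported in $B_\varepsilon$, one has $A_l^\varepsilon=A_l^{N,\varepsilon}$ on $B_{N-\varepsilon}$, so for $\varepsilon<1$ the processes $\xi^\varepsilon$ and $\xi^{N,\varepsilon}$ agree up to the first exit time of $\xi^\varepsilon$ from $B_{N-1}$.

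Next I would establish uniform moment estimates. By Lemma \ref{lemma1}(1) the linear-growth bound on $A_l$ passes to every $A_l^\varepsilon$ with the same constant, so a standard BDG plus Gronwall argument yields, for every $q>0$ and every bounded set $S$,
\begin{equation*}
\sup_{\varepsilon<\varepsilon_0}\sup_{x\in S}\E\sup_{0\leqslant s\leqslant T}|\xi_s^\varepsilon(x)|^q + \sup_{x\in S}\E\sup_{0\leqslant s\leqslant T}|\xi_s(x)|^q < \infty.
\end{equation*}
A Chebyshev bound then shows that the bad event $E_N^c:=\{\sup_{s\leqslant T}(|\xi_s^\varepsilon(x)|\vee|\xi_s(x)|)>N-1\}$ satisfies $\p(E_N^c)=O(N^{-q})$, uniformly in $\varepsilon<\varepsilon_0$ and $x\in S$.

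Finally I would decompose the error. On $E_N$ the identifications of the first step give $\xi_s^\varepsilon=\xi_s^{N,\varepsilon}$ and $\xi_s=\xi_s^N$ for all $s\in[0,T]$, so Cauchy--Schwarz yields
\begin{equation*}
\E\sup_{s\leqslant T}|\xi_s^\varepsilon-\xi_s|^p \leqslant \E\sup_{s\leqslant T}|\xi_s^{N,\varepsilon}-\xi_s^N|^p + \Big(\E\sup_{s\leqslant T}|\xi_s^\varepsilon-\xi_s|^{2p}\Big)^{1/2}\p(E_N^c)^{1/2}.
\end{equation*}
For each fixed $N$ the first term tends to $0$ as $\varepsilon\to 0$ uniformly in $x\in S$ by a classical Gronwall estimate, since $\sup_x|A_l^{N,\varepsilon}-A_l^N|\to 0$ and the $A_l^{N,\varepsilon}$ share a common global Lipschitz constant (Lemma \ref{lemma1}(1)); the second term is $O(N^{-q/2})$ by the moment bound. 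Sending $\varepsilon\to 0$ first and then $N\to\infty$ completes the argument. The main technical point is the pathwise identification on $E_N$: one must read the exit event on $\xi^\varepsilon$ rather than on $\xi^{N,\varepsilon}$, which forces taking $\varepsilon<1$ and the inner radius $N-1$ rather than $N$.
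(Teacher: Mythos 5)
Your proposal is correct and follows essentially the same route as the paper: cut off to bounded globally Lipschitz coefficients $A_l^N$, mollify, use the linear-growth hypothesis for uniform moment bounds, control the exit event by Chebyshev, and apply the classical Gronwall estimate for the bounded Lipschitz system. The paper's version writes the error as a three-term triangle inequality $|\xi^\varepsilon-\xi|^p\lesssim |\xi^{N,\varepsilon}-\xi^\varepsilon|^p+|\xi^N-\xi|^p+|\xi^{N,\varepsilon}-\xi^N|^p$ and then kills the first two by splitting on the exit events, whereas you bound the target quantity directly by partitioning on $E_N$ vs.\ $E_N^c$; the two decompositions are arithmetically interchangeable and land on the same ingredients. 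One genuine refinement on your side: you observe that the mollified truncated coefficients $A_l^{N,\varepsilon}$ only agree with $A_l^\varepsilon$ on $B_{N-\varepsilon}$, so the pathwise identification must be read off a strictly smaller ball $B_{N-1}$ (for $\varepsilon<1$); the paper states the agreement up to the exit time from $B_N$ itself, which glosses over this $\varepsilon$-fattening of the cutoff boundary. Your version is the cleaner formulation.
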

\begin{proof}
Let $T^{\varepsilon}_N(x)$, $T_N(x)$ be the first exist time of the ball $B_N$ for the process
$\xi_s^{\varepsilon}(x)$, $\xi_s(x)$ respectively. Since $\xi_s^{N,\varepsilon}(x)=\xi_s^{\varepsilon}(x)$
a.s. for $s<T^{\varepsilon}_N(x)$,  and $\xi_s^{N}(x)=\xi_s(x)$ a.s for $s<T_N(x)$, we have,
\begin{equation}\label{l4}
\begin{split}
&\E\sup_{0\leqslant s \leqslant T}\Big[ |\xi_s^{N,\varepsilon}(x)-
\xi_s^{\varepsilon}(x)|^p+|\xi_s^{N}(x)-\xi_s(x)|^p\Big]\\
&\leqslant C\sup_{0<\varepsilon<\varepsilon_0,N>0}\sup_{x \in S}\Big(  \sqrt{\E\sup_{0\leqslant s \leqslant T} 
|\xi_s^{N,\varepsilon}(x)|^{2p}}\sqrt{\p(T>T^{\varepsilon}_N(x))}\\
&+ \sqrt{\E\sup_{0\leqslant s \leqslant T} 
|\xi_s^{N}(x)|^{2p}}\sqrt{\p(T>T_N(x))}\Big)\\
&\leqslant  \sup_{0<\varepsilon<\varepsilon_0,N>0}\frac{\sup_{x \in S}\left(   \E\sup_{0\leqslant s \leqslant T} 
|\xi_s^{N,\varepsilon}(x)|^{2p}+|\xi_s^{\varepsilon}(x)|^{2p}\right)}{N^p}.
\end{split}
\end{equation}
This convergence to $0$ as $N\to \infty$ from the uniform estimates below:
   \begin{equation}\label{l1}
\sup_{0<\varepsilon<\varepsilon_0,N>0}\sup_{x \in S}\left(   \E\sup_{0\leqslant s \leqslant T} 
|\xi_s^{N,\varepsilon}(x)|^p+|\xi_s^{\varepsilon}(x)|^p\right)<\infty. 
\end{equation}
The uniform estimate holds for any $p\geqslant 1$ and follows from the common linear bounded on $A_l^\epsilon$.
Since $A_l^N$ is bounded and global Lipschitz continuous for
each $N>0$, a Grownwall type argument shows that 
\begin{equation}\label{l2}
\lim_{\varepsilon \rightarrow 0} \sup_{x \in S}
\E\sup_{0\leqslant s \leqslant T}|\xi_s^{N,\varepsilon}(x)-\xi_s^N(x)|^p=0 
\end{equation}
By (\ref{l4}), (\ref{l2}) and (\ref{l1}), we conclude the proof by taking $\epsilon\to 0$ followed by $N\to \infty$ in the following inequality:
\begin{equation}\label{h3}
\begin{split}
 &\E\sup_{0\leqslant s \leqslant T} 
|\xi_s^{\varepsilon}(x)-\xi_s(x)|^p
\leqslant C  \E\sup_{0\leqslant s \leqslant T}
\left( |\xi_s^{N,\varepsilon}(x)-\xi_s^{\varepsilon}(x)|^p+|\xi_s^{N}(x)-\xi_s(x)|^p\right) \\
& +C \; \E\sup_{0\leqslant s \leqslant T} 
|\xi_s^{N,\varepsilon}(x)-\xi_s^{N}(x)|^p.
\end{split}
\end{equation}
\end{proof}

\begin{remark}
If we assume coefficients $A_l$ of SDE (\ref{e1}) are locally Lipschitz continuous, elliptic 
and of linear growth. Since the coefficients $A_l^N$ are bounded Lipschitz continuous and uniform elliptic for each $N>0$, 
the distribution of  $\xi_s^N(x)$ is 
absolutely continuous with respect to the Lebesgue measure in $\R^n$ for each fixed $s>0$ and 
$x \in \R^n$. Note that we have proved (\ref{l0}), by the same approximation methods we adopted in
the proof of Lemma \ref{Lem N2}, we can prove the distribution of  $\xi_s(x)$ is 
absolutely continuous with respect to the Lebesgue measure in $\R^n$ for each fixed $s>0$ and 
$x \in \R^n$. In particular that if we take different versions of $DA_l$ in SDE (\ref{e4}), the solution $V_s$ are indistinguishable.   
\end{remark}

As the same argument in the 
proof Lemma \ref{Lem l0} above, especially triangle inequality (\ref{h3}) and the results of 
Theorem  \ref{Th3}, we present below an approximation lemma for $V_s$ in more general case and the remaining of the section devotes to the validity of the assumption there.

\begin{lemma}
\label{lemma5}
Let $S\subset \R^n$ be a bounded set and $T>0$. If 
\begin{equation}\label{g1}
\lim_{N\rightarrow \infty}\sup_{0<\varepsilon<\varepsilon_0}\sup_{x \in S} \E\sup_{0\leqslant t \leqslant T}
\left( |V_t^{N,\varepsilon}(x)-V_t^{\varepsilon}(x)|^p+|V_t^{N}(x)-V_t(x)|^p\right) =0 
\end{equation}
for all $p$, then
\begin{equation*}
\lim_{\varepsilon\rightarrow 0}\sup_{x \in S} \E\sup_{0\leqslant t \leqslant T} 
|V_t^{\varepsilon}(x)-V_t(x)|^p=0
\end{equation*}
\end{lemma}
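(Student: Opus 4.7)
The plan is to proceed exactly as in Lemma \ref{Lem l0}, exploiting a triangle inequality together with the fact that for each fixed truncation level $N$ the coefficients $A_l^N$ are bounded, globally Lipschitz and uniformly elliptic, so that Theorem \ref{Th3} applies to the truncated system.

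More precisely, I would decompose
\begin{equation*}
V_t^{\varepsilon}(x)-V_t(x)=\bigl(V_t^{\varepsilon}(x)-V_t^{N,\varepsilon}(x)\bigr)+\bigl(V_t^{N,\varepsilon}(x)-V_t^{N}(x)\bigr)+\bigl(V_t^{N}(x)-V_t(x)\bigr),
\end{equation*}
take the supremum over $[0,T]$, raise to the $p$-th power, and use the inequality $(a+b+c)^p\le 3^{p-1}(a^p+b^p+c^p)$ to obtain
\begin{equation*}
\begin{split}
\E\sup_{0\le t\le T}|V_t^{\varepsilon}(x)-V_t(x)|^p
&\le C\,\E\sup_{0\le t\le T}\bigl(|V_t^{N,\varepsilon}(x)-V_t^{\varepsilon}(x)|^p+|V_t^{N}(x)-V_t(x)|^p\bigr)\\
&\quad +C\,\E\sup_{0\le t\le T}|V_t^{N,\varepsilon}(x)-V_t^{N}(x)|^p.
\end{split}
\end{equation*}

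Given any $\delta>0$, the hypothesis (\ref{g1}) allows me to fix $N$ so large that
\begin{equation*}
\sup_{0<\varepsilon<\varepsilon_0}\sup_{x\in S}\E\sup_{0\le t\le T}\bigl(|V_t^{N,\varepsilon}(x)-V_t^{\varepsilon}(x)|^p+|V_t^{N}(x)-V_t(x)|^p\bigr)<\delta.
\end{equation*}
With this $N$ now frozen, the coefficients $A_l^{N}$ are bounded, globally Lipschitz continuous and uniformly elliptic; their mollifications $A_l^{N,\varepsilon}$ satisfy the same assumptions with uniform constants (for $\varepsilon<\varepsilon_0$). Hence Theorem \ref{Th3} applied to the truncated system yields
\begin{equation*}
\lim_{\varepsilon\to 0}\sup_{x\in S}\E\sup_{0\le t\le T}|V_t^{N,\varepsilon}(x)-V_t^{N}(x)|^p=0.
\end{equation*}
Combining the two estimates and letting first $\varepsilon\to 0$ and then $\delta\to 0$ gives the claim.

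I do not expect a genuine obstacle here: the structure is a standard $\varepsilon$/$N$ exchange argument, parallel to the one carried out for $\xi_s^{\varepsilon}$ in Lemma \ref{Lem l0}. The only point requiring mild care is verifying that Theorem \ref{Th3} is indeed applicable to the mollified family $A_l^{N,\varepsilon}$ with constants uniform in $\varepsilon$, which follows from Lemma \ref{lemma1} and Lemma \ref{lemma2}; once that is in place, the triangle inequality plus the assumption (\ref{g1}) does all the work.
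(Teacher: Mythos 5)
Your argument is correct and is precisely the paper's own proof: the paper explicitly states that Lemma \ref{lemma5} follows ``as the same argument in the proof of Lemma \ref{Lem l0}, especially triangle inequality (\ref{h3}) and the results of Theorem \ref{Th3}'', which is exactly the three-term decomposition, the use of hypothesis (\ref{g1}) to fix $N$, and the application of Theorem \ref{Th3} to the bounded-Lipschitz-elliptic truncated system that you carry out. No discrepancy.
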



The following theorem,  from Theorem 5.1 and observations from section 6 in  \cite{Li1},  are valid for 
strong solutions $\xi_t, V_t$ of SDE's (\ref{e1}) and  (\ref{e4}), which are not necessarily elliptic or with smooth 
coefficients. 
\begin{thm}\label{old-thm}
\begin{itemize}
\item[(1)]  Suppose that there is a point $x_0$ such that the solution $\xi_t(x_0)$ exists for all time and
$$ \sup_{|v|=1} \langle DA_0(v), v\rangle (x) \leqslant f(x)|v|^2 \qquad \sum_{l=1}^m |DA_l|^2(x) \leqslant f(x)$$
for some function  $f: \R^n\to \R$. Then
 $$ \E \sup_{s\le t} |T\xi_t|^p <c\E \exp{\left( 6p^2\int_0^t f(\xi_s(x))ds\right)} $$
and the SDE is strongly complete.
\item   If $g:\R^n\to \R$ is a function such that $g \in C^2(\R^n)$
\begin{equation}\label{condition2}
{1\over 2}\sum_{l=1}^m |Dg(A_l)|^2+{1\over 2}\sum_{l=1}^mD^2g(A_l,A_l)+Dg(A_0)\leqslant  K
\end{equation}
for some constant $K$, then for all $\sigma$ and stopping times $\tau$, 
$$\E \exp{\left(\sigma g(\xi_{t\wedge \tau})(x)\right)}\leqslant \e^{\sigma g(x)+kt}$$ for some $k$ depending on $K$
and  the SDE is complete if $g$ has compact level sets.
\end{itemize}\end{thm}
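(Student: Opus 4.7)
\medskip
\noindent\textbf{Plan of proof.} Both parts reduce to It\^o--Gronwall arguments applied to a suitable exponential functional, following the approach of Theorem~5.1 and Section~6 of \cite{Li1}.

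For Part (1), the starting point is the representation (\ref{N18}), which, as observed in the remark preceding the theorem, is valid for any strong solution of (\ref{e4}) without further regularity on the vector fields. Writing $|V_t|^p = \exp(M_t^p-\tfrac12\langle M^p\rangle_t+a_t^p)$, the two hypotheses on $\langle DA_0(v),v\rangle$ and $\sum_l|DA_l|^2$ feed directly into the formula (\ref{N16}): specializing to $|v|=1$ gives the pointwise estimate $H_p(v,v)\le(p+3)f(\xi_s)$, hence $a_t^p\le\tfrac{p(p+3)}{2}\int_0^t f(\xi_s)\,ds$, and simultaneously $d\langle M^p\rangle_s/ds\le p^2 f(\xi_s)$. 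I would then split the exponent as $\tfrac12(2M_t^p-2\langle M^p\rangle_t)+\bigl(\tfrac12\langle M^p\rangle_t+a_t^p\bigr)$ and apply Cauchy--Schwarz: the first factor has expectation $\le 1$ since $\exp(2M_t^p-2\langle M^p\rangle_t)$ is a nonnegative local supermartingale, and the second is bounded by $\exp(cp^2\int_0^t f(\xi_s)\,ds)$, with the constant $c$ arranged to yield the stated $6p^2$. A $\sup_{s\le t}$ is inserted via Doob's $L^p$ inequality applied to the exponential martingale part. Strong completeness is then the output of Section~6 of \cite{Li1}: for $p>n$, the derivative moment bound combined with Kolmogorov's continuity criterion produces a jointly continuous modification of $(t,x)\mapsto\xi_t(x)$, and non-explosion from the single point $x_0$ propagates to all starting points via the flow property.

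For Part (2), I would apply It\^o's formula directly to $\e^{\sigma g(\xi_t)}$ to obtain
\[
d\,\e^{\sigma g(\xi_t)}=\sigma\,\e^{\sigma g(\xi_t)}\sum_{l=1}^m Dg(A_l)\,dW^l+\sigma\,\e^{\sigma g(\xi_t)}\Bigl[Dg(A_0)+\tfrac12\sum_l D^2g(A_l,A_l)+\tfrac{\sigma}{2}\sum_l|Dg(A_l)|^2\Bigr]dt.
\]
The bracketed drift equals the left side of (\ref{condition2}) plus the extra term $\tfrac{\sigma-1}{2}\sum_l|Dg(A_l)|^2$; for $\sigma\le 1$ it is already bounded by $K$, while for $\sigma>1$ one reuses (\ref{condition2}) to absorb the extra term into a larger constant $k=k(\sigma,K)$. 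Localizing by $\tau\wedge\tau_N$ with $\tau_N=\inf\{t:|g(\xi_t)|\ge N\}$ (to make the martingale a true martingale and the exponential bounded), taking expectations, and applying Gronwall yield $\E\exp(\sigma g(\xi_{t\wedge\tau\wedge\tau_N}))\le\exp(\sigma g(x)+kt)$; monotone convergence removes $\tau_N$. When $g$ has compact level sets, Markov's inequality gives $\p(\tau_N\le t)\le\exp(-\sigma N+\sigma g(x)+kt)\to 0$ as $N\to\infty$, which precludes explosion.

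The main obstacle is the localization in Part (1): since the integrability of $|V_t|^p$ and of $\exp(6p^2\int_0^t f(\xi_s)\,ds)$ is only \emph{a~priori}, each Cauchy--Schwarz and Doob estimate has to be carried out on a localized process and the bound then passed to the limit without degradation of the constant. This technical bookkeeping is precisely what the computation in Section~5 of \cite{Li1} handles, and I would invoke it rather than reproduce it.
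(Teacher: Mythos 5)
Your proposal follows the paper's route exactly: the paper does not give a self-contained proof of this theorem but defers to Theorem~5.1 and Section~6 of \cite{Li1}, adding only the key observation that the exponential representation (\ref{N18}) remains valid for any strong solution of (\ref{e4}) without smoothness of the coefficients, so that Li's smooth-coefficient argument carries over unchanged; you have reconstructed precisely those steps (the $H_p$ bound in terms of $f$, the supermartingale/bounded-variation split with Cauchy--Schwarz and Doob, and the It\^o--Gronwall exponential estimate with localization for part~(2)). One small technicality to note in part~(2): for $\sigma>1$ the phrase ``reuses (\ref{condition2}) to absorb the extra term'' is not automatic, since (\ref{condition2}) bounds the signed sum $\tfrac12\sum_l|Dg(A_l)|^2+\tfrac12\sum_lD^2g(A_l,A_l)+Dg(A_0)$ but not $\sum_l|Dg(A_l)|^2$ on its own; the argument is clean for $\sigma\le1$ (which already gives completeness when $g$ has compact level sets), and the same looseness (``for all $\sigma$'', $k$ depending only on $K$) is present in the theorem's statement as quoted from \cite{Li1}, so this is an imprecision inherited from the source rather than a flaw introduced by your argument.
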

The theorems and analysis we cited above from \cite{Li1}  are for the SDEs with smooth coefficients.
Our key observation is that when the  coefficients are not smooth,  (\ref{N18})) still holds for a strong solution. The same argument and technicalities applies and we obtain the conclusion above.

 The application of the theorem reduces to a well chosen function $f, g$ for a particular SDE.

\begin{asu}\label{J1}
Let $A_i:\R^n\to \R^n$, $l=0,1,\dots, m$, be locally Lipschitz continuous.
Let  $\psi_i:\R\to \R, i=1,2$ be positive non-decreasing functions and and let
$g_i(x):=\psi_i(|x|)$. Suppose that  $g_1$ is $C^2$ and 
the following holds: 
\begin{enumerate}
\item [(1)] $\sum_{l=1}^m|D A_l|^2(x) \leqslant  g_1(x)$, 
$\sup_{|v|=1}{\langle DA_0(x)(v),v\rangle }  \leqslant g_1(x)|v|^2$
\item[(2)]  $\sum_{l=0}^m |A_l|(x)\leqslant  g_2(x)\leqslant C_2(1+|x|)$, 
\item[(3)]
$${1\over 2}\sum_{l=1}^m |Dg_1(A_l)|^2+{1\over 2}\sum_{l=1}^m D^2g_1(A_l,A_l)+Dg_1(A_0)\leqslant C_3$$
\end{enumerate}
Here $C_2$ and $C_3$ are some  constants.
\end{asu}

Condition (3) in Assumption \ref{J1} is satisfied if $\psi_1 \in C^2(\R)$ and  
$\psi_1^{''}(s)(\psi_2(s))^2$ and $\psi^{'}_1(s)\psi_2(s)$ are bounded. It follows from the comments made earlier 
that  $\E e^{ \sigma g_1(\xi_{t}(x))}$ is finite for each $\sigma>0$.
 \begin{remark}
Assumption \ref{J1} holds under one of the following conditions:
\begin{enumerate}
\item [(a)] $\sum_{l=1}^m|D A_l|$ is bounded.
\item[(b)] $\sum_{l=1}^m |DA_l(x)|^2 \leqslant C\big(1+\ln(1+|x|^2)\big), \qquad \sum_{l=0}^m | A_l(x)| \leqslant C(1+|x|)$, \\
$\langle x, A_0(x)\rangle\leqslant C(1+|x|^2)$, \qquad 
$\langle DA_0(x)(v), v\rangle \leqslant C\big(1+\ln(1+|x|^2)\big)|v|^2$.
\item[(c)]  For some $\delta\ge 0$, the following holds, \\
$\sum_{l=1}^m|A_l(x)|\leqslant C(1+|x|^2)^{{1\over 2}-\delta}$ and $\sum_{l=1}^m|DA_l(x)|^2\leqslant C(1+|x|^2)^{\delta}$ and \\
$\langle x, A_0(x)\rangle \leqslant C(1+|x|^2)^{1-\delta}$,
$\langle DA_0(x)(v), v\rangle \leqslant C(1+|x|^2)^{\delta}\,|v|^2$.
\end{enumerate} 
For part a) take $g_1$ to be a constant and $g_2$ a linear function . 
For (b) let $\psi_1(s)=\ln (1+s^2) $ and $\psi_2(s)=1+s$. For (c) Let $\psi_2(s)=C(1+s^2)^{{1\over 2}-\delta}$ and $\psi_1(s)=C(1+s^2)^{\delta}$. See Corollary 6.2 and 6.3 in \cite{Li1}.
\end{remark}

\begin{proposition}\label{J2}
Suppose that  $\Span \{A_1(x), \dots, A_m(x)\}=\R^n$ for each $x$ so (\ref{e1}) is elliptic. If in addition that  $\{A_0, A_1,\dots, A_m\}$ satisfies Assumption \ref{J1},
condition (\ref{g1}) in Lemma \ref{lemma5} holds. 
In particular, for each $T>0,p>0$ and bounded set $S$ in $\R^n$,
\begin{equation*}
\lim_{\varepsilon\rightarrow 0}\sup_{x \in S}\E
\sup_{0\leqslant s \leqslant T }|V_s^{\varepsilon}(x)-V_s(x)|^p=0. 
\end{equation*}
\end{proposition}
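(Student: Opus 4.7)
By Lemma \ref{lemma5}, it suffices to verify condition (\ref{g1}), i.e.\ for every $p>0$, $T>0$ and bounded $S\subset\R^n$,
\begin{equation*}
\lim_{N\to\infty}\sup_{0<\varepsilon<\varepsilon_0}\sup_{x\in S}\E\sup_{0\leqslant t\leqslant T}\bigl(|V_t^{N,\varepsilon}(x)-V_t^{\varepsilon}(x)|^p+|V_t^N(x)-V_t(x)|^p\bigr)=0.
\end{equation*}
The plan is to reduce the difference of derivative flows to a product of a uniform moment bound on $V$ and an exit-time probability for $\xi$ that vanishes with $N$.

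First, by pathwise uniqueness $V_t^{N,\varepsilon}(x)=V_t^{\varepsilon}(x)$ for $t<T_N^{\varepsilon}(x):=\inf\{s\geqslant 0: |\xi_s^{\varepsilon}(x)|\geqslant N\}$, and similarly $V_t^N(x)=V_t(x)$ for $t<T_N(x)$. Hence on $[0,T]$,
\begin{equation*}
|V_t^{N,\varepsilon}-V_t^{\varepsilon}|^p\leqslant 2^p\bigl(|V_t^{N,\varepsilon}|^p+|V_t^{\varepsilon}|^p\bigr)\1_{\{T_N^{\varepsilon}(x)\leqslant T\}},
\end{equation*}
and Cauchy--Schwarz yields
\begin{equation*}
\E\sup_{t\leqslant T}|V_t^{N,\varepsilon}-V_t^{\varepsilon}|^p\leqslant 2^p\Bigl(\E\sup_{t\leqslant T}\bigl(|V_t^{N,\varepsilon}|^{2p}+|V_t^{\varepsilon}|^{2p}\bigr)\Bigr)^{1/2}\p\bigl(T_N^{\varepsilon}(x)\leqslant T\bigr)^{1/2}.
\end{equation*}
The same estimate holds with the un-smoothed processes. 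It therefore suffices to establish two uniform bounds: (a) $\sup_{N,\varepsilon}\sup_{x\in S}\E\sup_{t\leqslant T}|V_t^{N,\varepsilon}(x)|^{2p}<\infty$, and analogously for $V^{\varepsilon}$, $V^N$, $V$; and (b) $\sup_{\varepsilon}\sup_{x\in S}\p(T_N^{\varepsilon}(x)\leqslant T)\to 0$ as $N\to\infty$, and likewise for $T_N(x)$.

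For (a), I apply Theorem \ref{old-thm} with auxiliary functions $\tilde g_1(x):=\psi_1(|x|+1)$ and $\tilde g_2(x):=\psi_2(|x|+1)$. The key point is that the bounds in Assumption \ref{J1} are preserved by convolution and by the truncation (\ref{h1}), uniformly in $\varepsilon<1$ and $N$. Indeed, by Jensen's inequality applied to the convex function $|\cdot|^2$ and monotonicity of $\psi_1,\psi_2$,
\begin{equation*}
\sum_{l=1}^m|DA_l^{N,\varepsilon}(x)|^2\leqslant \eta_{\varepsilon}\ast\Bigl(\sum_{l=1}^m|DA_l^N|^2\Bigr)(x)\leqslant \eta_\varepsilon\ast g_1(x)\leqslant\psi_1(|x|+1)=\tilde g_1(x),
\end{equation*}
and similarly for $\sup_{|v|=1}\langle DA_0^{N,\varepsilon}(v),v\rangle$ and for $|A_l^{N,\varepsilon}|\leqslant\tilde g_2$; an analogous computation for the truncation uses that $DA_l^N$ is obtained from $DA_l$ by removing the radial component for $|x|>N$ (so $|DA_l^N(x)|\leqslant|DA_l(N\vartheta)|\leqslant\psi_1(|x|)^{1/2}$ there). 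Condition (\ref{condition2}) for the shifted $\tilde g_1$ against $A_l^{N,\varepsilon}$ reduces to the boundedness of $\psi_1''(\cdot+1)\psi_2(\cdot+1)^2$ and $\psi_1'(\cdot+1)\psi_2(\cdot+1)$, which follows from the hypotheses on $\psi_1,\psi_2$. Theorem \ref{old-thm}(2) applied to $\tilde g_1$ then gives $\E\e^{\sigma\tilde g_1(\xi_t^{N,\varepsilon}(x))}\leqslant\e^{\sigma\tilde g_1(x)+kt}$ with $k$ independent of $N,\varepsilon$; combined with Theorem \ref{old-thm}(1) and Jensen's inequality on the time integral $\frac{1}{T}\int_0^T\e^{6p^2 T\tilde g_1(\xi_s)}ds$, this yields the claimed uniform moment bound on $|V^{N,\varepsilon}|^{2p}$. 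The same argument covers $V^{\varepsilon}, V^N, V$.

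For (b), the linear growth bound $|A_l^{N,\varepsilon}|\leqslant\tilde g_2(x)\leqslant C(1+|x|)$ together with standard Gronwall-type estimates gives $\sup_{N,\varepsilon}\sup_{x\in S}\E\sup_{t\leqslant T}|\xi_t^{N,\varepsilon}(x)|^{q}<\infty$ for every $q>0$, and Chebyshev's inequality gives $\p(T_N^{\varepsilon}(x)\leqslant T)\leqslant CN^{-q}\to 0$ uniformly in $\varepsilon$ and in $x\in S$. The same for $\xi^N,\xi$. Combining (a) and (b), condition (\ref{g1}) follows, and Lemma \ref{lemma5} concludes the proof.

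The main obstacle is the careful bookkeeping in (a): verifying that the hypotheses of Theorem \ref{old-thm} are preserved \emph{uniformly} under the two simultaneous approximation steps (mollification and radial truncation). The monotonicity of $\psi_1,\psi_2$ and the convexity arguments above are what make this verification go through without degrading the constants in $N$ or $\varepsilon$.
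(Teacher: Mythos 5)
Your proposal is correct and follows essentially the same route as the paper: reduce via Lemma \ref{lemma5}, use pathwise agreement of $V^{N,\varepsilon}$ and $V^{\varepsilon}$ up to the exit time $T_N^{\varepsilon}(x)$ together with Cauchy--Schwarz, obtain uniform moment bounds on the derivative flows from Theorem \ref{old-thm} with the Lyapunov function $\tilde g_1(x)=\psi_1(|x|+1)$ (checking the bounds of Assumption \ref{J1} survive mollification and truncation uniformly), and kill the exit probability by Chebyshev using the linear-growth moment estimate (\ref{l1}). The only difference is that you spell out the preservation of the bounds under the radial truncation (\ref{h1}) more explicitly than the paper, which simply invokes Lemma \ref{lemma1}; this is a useful clarification but not a different argument.
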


\begin{proof}
Here in the proof the constants $C$ may change in different lines and only depend on 
$p,S,T$. By Lemma \ref{lemma1}, for  all $N>0$ and $0<\varepsilon<\varepsilon_0$, 
$$|D A_l^{\varepsilon}(x)|^2+|D A_l^{N,\varepsilon}(x)|^2\leqslant 2 \psi_1(|x|+1), 
\qquad |A_l^{\varepsilon}(x)|+|A_l^{N,\varepsilon}(x)|\leqslant\psi_2(|x|+1).$$
So  there is a global solution to approximation SDEs with  smooth coefficients $A_l^{\varepsilon}$ and $A_l^{N,\varepsilon}$ for any starting point and  it follows from the assumption that  
$\tilde{g}(x):=\psi_1(|x|+1)$ is $C^2$ and the functions
\begin{eqnarray*}
&&{1\over 2}\sum_{l=1}^m |(D \tilde{g})(A_l^{\varepsilon})|^2+{1\over 2}\sum_{l=1}^m (D^2\tilde{g})(A_l^{\varepsilon}, A_l^{\varepsilon})
+(D\tilde{g})(A^{\varepsilon}_0)\\
&&{1\over 2}\sum_{l=1}^m |(D \tilde{g})(A_l^{N,\varepsilon})|^2+{1\over 2}\sum_{l=1}^m (D^2\tilde{g})(A_l^{N,\varepsilon}, A_l^{N,\varepsilon})
+(D\tilde{g})(A^{N,\varepsilon}_0)
\end{eqnarray*}
are bounded above with the upper bound  uniform in $\varepsilon\in (0, \varepsilon_0)$ and in $N>0$. From the calculations in  Lemma 6.1 and Theorem 5.1 of \cite{Li1} or see Theorem \ref{old-thm} before, for every $p>0$, 
\begin{equation}\label{g5}
\sup_{0<\varepsilon<\varepsilon_0,N>0}\sup_{x \in S}\left(   \E\sup_{0\leqslant t \leqslant T} 
|V_t^{N,\varepsilon}(x)|^p+|V_t^{\varepsilon}(x)|^p\right)\leqslant C< \infty 
\end{equation} 
As we remark before, the same theory can apply to SDE (\ref{e1}) and (\ref{e2}) with 
strong solution, we obtain, 

\begin{equation}\label{g8}
\sup_{N>0}\sup_{x \in S}\left(   \E\sup_{0\leqslant t \leqslant T} 
|V_t^{N}(x)|^p+|V_t(x)|^p\right)\leqslant C< \infty 
\end{equation} 

As before, let $T_{N}(x)$, $T_{N}^{\varepsilon}(x)$ be the first exit times from the ball $B_N$  of $\xi_{\cdot}(x)$ and $\xi_{\cdot}^{\varepsilon}(x)$. For  $x \in S$, $0<\varepsilon<\varepsilon_0$ and $N$ large so that  $S\subseteq B_N$, we have,
\begin{equation}\label{g11}
\begin{split}
&\E\sup_{0\leqslant t \leqslant T}
|V_t^{N,\varepsilon}(x)-V_t^{\varepsilon}(x)|^p\\
&\leqslant
C\E\Big[\Big(\sup_{0\leqslant t \leqslant T}|V_t^{N,\varepsilon}(x)|^p
+\sup_{0\leqslant t \leqslant T}|V_t^{\varepsilon}(x)|^p\Big)I_{\{T>T_{N}^{\varepsilon}(x)\}}
\Big]\\
&\leqslant C\Big(\sqrt{\E\sup_{0\leqslant t \leqslant T}|V_t^{N,\varepsilon}(x)|^{2p}}
\sqrt{\mathbf{P}(T>T_{N}^{\varepsilon}(x))}+\sqrt{\E\sup_{0\leqslant t \leqslant T}|V_t^{\varepsilon}(x)|^{2p}}
\sqrt{\mathbf{P}(T>T_{N}^{\varepsilon}(x))}\Big)\\
&\leqslant C\big(\mathbf{P}(T>T_{N}^{\varepsilon}(x))\big)^{1/2}\\
&\leqslant  \frac{C\sup_{0<\varepsilon<\varepsilon_0,N>0}\sup_{x \in S} \sqrt{\E\sup_{0\leqslant s \leqslant T} 
|\xi_s^{N,\varepsilon}(x)|^{2p}}}{N^p}
\leqslant \frac{C}{N^p} 
\end{split}
\end{equation}
Here in the last step we use the estimation (\ref{l1}) by linear growth condition of $A_l$. So
by (\ref{g11}), we get, 
\begin{equation*}
\lim_{N\rightarrow \infty}\sup_{0<\varepsilon<\varepsilon_0}\sup_{x \in S}\E\sup_{0\leqslant t \leqslant T}
|V_t^{N,\varepsilon}(x)-V_t^{\varepsilon}(x)|^p=0. 
\end{equation*}
Analogously, using (\ref{g8}), we have  the results for the quantities without $\varepsilon$.

\end{proof}

 In the proof of  Theorem 3.1,  ellipticity condition is only needed for the estimate of the item 
$\mathbb{P}(\xi_s(x)\in U(R,\zeta))$ and Lemma \ref{lemma2} holds automatically if $A_l$ are $C^1$.
The corresponding theorem for non-elliptic systems are given below.
\begin{proposition}
\label{C1-case}
Suppose the coefficients $A_l$ of SDE (\ref{e1}) are $C^1$ and satisfies Assumption \ref{J1}. Then
$$\lim_{\varepsilon\rightarrow 0}\sup_{x \in S}\E
\sup_{0\leqslant s \leqslant T }|V_s^{\varepsilon}(x)-V_s(x)|^p=0$$
for each $T>0$, $p>0$ and bounded subset $S$ in $\R^n$. 
\end{proposition}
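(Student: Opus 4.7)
The plan is to follow the blueprint of Proposition \ref{J2} step by step; the only place where ellipticity entered there was through the invocation of Theorem \ref{Th3} on the truncated system, and under the $C^1$ hypothesis this one step admits a direct replacement.

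First I establish the uniform moment bound
\begin{equation*}
\sup_{0<\varepsilon<\varepsilon_0,\,N>0}\sup_{x\in S}\E\sup_{0\le t\le T}\Bigl(|V_t^{N,\varepsilon}(x)|^p+|V_t^{\varepsilon}(x)|^p+|V_t^{N}(x)|^p+|V_t(x)|^p\Bigr)<\infty
\end{equation*}
for every $p>0$, exactly as in (\ref{g5}) and (\ref{g8}): the Lyapunov argument of Theorem \ref{old-thm} applied to $\tilde g(x)=\psi_1(|x|+1)$ goes through because Assumption \ref{J1} is preserved, with the same comparison functions up to constants, under both the radial truncation and the mollification, and no ellipticity is used. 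Combined with the linear growth of $A_l$, the exit-time argument of (\ref{g11}) then yields
\begin{equation*}
\lim_{N\to\infty}\sup_{0<\varepsilon<\varepsilon_0}\sup_{x\in S}\E\sup_{0\le t\le T}|V_t^{N,\varepsilon}-V_t^{\varepsilon}|^p+\lim_{N\to\infty}\sup_{x\in S}\E\sup_{0\le t\le T}|V_t^{N}-V_t|^p=0.
\end{equation*}

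The crucial step is to show, for each fixed $N$,
\begin{equation*}
\lim_{\varepsilon\to 0}\sup_{x\in S}\E\sup_{0\le t\le T}|V_t^{N,\varepsilon}(x)-V_t^{N}(x)|^p=0.
\end{equation*}
Since $A_l$ is globally $C^1$, $DA_l^N$ agrees with the continuous classical derivative $DA_l$ on $B_N$, so standard mollifier theory gives $DA_l^{N,\varepsilon}\to DA_l^N$ uniformly on every compact subset of $B_N$, while $\|DA_l^{N,\varepsilon}\|_\infty$ is bounded uniformly in $\varepsilon$. Writing
\begin{equation*}
DA_l^{N,\varepsilon}(\xi_s^{N,\varepsilon})V_s^{N,\varepsilon}-DA_l^{N}(\xi_s^{N})V_s^{N}=DA_l^{N,\varepsilon}(\xi_s^{N,\varepsilon})(V_s^{N,\varepsilon}-V_s^{N})+\bigl[DA_l^{N,\varepsilon}(\xi_s^{N,\varepsilon})-DA_l^{N}(\xi_s^{N})\bigr]V_s^{N},
\end{equation*}
and further decomposing the bracket as $[DA_l^{N,\varepsilon}-DA_l^N](\xi_s^{N,\varepsilon})+[DA_l^N(\xi_s^{N,\varepsilon})-DA_l^N(\xi_s^N)]$, the first summand is absorbed by Gronwall via the uniform sup-bound, the first piece of the bracket tends to zero uniformly on $B_{N-\delta}$ (with the complementary event controlled by the linear-growth moment bound (\ref{N13}) and sending $\delta\to 0$ after $\varepsilon\to 0$), and the second piece tends to zero by uniform continuity of $DA_l$ on $\overline{B_N}$ together with $\xi_s^{N,\varepsilon}\to\xi_s^{N}$ from Lemma \ref{Lem l0}. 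Applying BDG, H\"older, and Gronwall as in (\ref{N21}) closes the estimate.

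Finally, the triangle inequality $|V_t^\varepsilon-V_t|\le|V_t^\varepsilon-V_t^{N,\varepsilon}|+|V_t^{N,\varepsilon}-V_t^N|+|V_t^N-V_t|$, with $\varepsilon\to 0$ followed by $N\to\infty$, gives the result. The main obstacle is the middle convergence: in the general $W^{1,p}_{\loc}$ setting one needed the ellipticity-based kernel estimate (\ref{N3}) to control $\p(\xi_s\in U(R,\zeta))$ in Lemma \ref{lemma3}, but once $DA_l$ is pointwise continuous the exceptional null set $U(R,\zeta)$ can be taken empty and uniform approximation on compact sets replaces that machinery.
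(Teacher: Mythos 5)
Your plan follows the same route as the paper's: run the proof of Proposition \ref{J2}, i.e.\ establish the moment bounds (\ref{g5})--(\ref{g8}) and the exit-time estimate (\ref{g11}), and then replace the ellipticity-dependent invocation of Theorem \ref{Th3} for the fixed-$N$ convergence with a direct argument exploiting the continuity of $DA_l$. This is indeed the spirit of the paper's remark preceding the proposition. However, there is a concrete gap in your treatment of the fixed-$N$ step.

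The trouble is that the polar truncation (\ref{h1}) \emph{destroys} the $C^1$-smoothness: $DA_l^N$ has a jump in its radial component across $\partial B_N$ unless $\partial_\rho A_l$ happens to vanish there. So $DA_l^N$ is continuous on $\R^n\setminus\partial B_N$, not on $\overline{B_N}$ globally, and $DA_l^{N,\varepsilon}\to DA_l^N$ only uniformly on compact subsets that avoid $\partial B_N$, not on $B_N$ itself near the boundary. Your proposed control of the piece $[DA_l^{N,\varepsilon}-DA_l^N](\xi_s^{N,\varepsilon})$ is to restrict to $B_{N-\delta}$ and claim "the complementary event [is] controlled by the linear-growth moment bound (\ref{N13}) and sending $\delta\to 0$ after $\varepsilon\to 0$." This does not work: as $\delta\to 0$ the set $B_{N-\delta}$ increases to $B_N$, so $\p(\xi_s^{N,\varepsilon}\notin B_{N-\delta})$ \emph{decreases} to $\p(|\xi_s^{N,\varepsilon}|\geqslant N)$, which by Chebyshev is $O(1/N^2)$ but is not zero; sending $\delta\to 0$ does not kill it. The same defect affects your treatment of $DA_l^N(\xi_s^{N,\varepsilon})-DA_l^N(\xi_s^N)$: both paths can visit a neighbourhood of $\partial B_N$ with probability that is not small in $\delta$, and ``uniform continuity of $DA_l$ on $\overline{B_N}$'' is beside the point there because $DA_l^N$, not $DA_l$, is the function being evaluated and it is discontinuous on $\partial B_N$. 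In the elliptic case this is precisely what the heat kernel bound (\ref{N3}) fixes via $\p(\xi_s\in U(R,\zeta))\lesssim\zeta^{1/n}s^{-1/2}$; without ellipticity, there is no a priori reason the law of $\xi_s^{N,\varepsilon}$ cannot charge a thin annulus about $\partial B_N$.

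As a result, your argument does not give $\lim_{\varepsilon\to 0}\sup_{x\in S}\E\sup_t|V_t^{N,\varepsilon}-V_t^N|^p=0$ for fixed $N$, but only a bound of the form $O(\gamma_N)$ with $\gamma_N>0$. This is not automatically fatal --- one could try to absorb $\gamma_N$ into the final $N\to\infty$ limit --- but you would then need to show that (i) $\gamma_N\to 0$ as $N\to\infty$ (this involves competing factors: $\|DA_l^N\|_\infty\sim\sqrt{g_1(N)}$ grows under Assumption \ref{J1}, while $\p(|\xi_s^{N,\varepsilon}|\geqslant N)\lesssim N^{-2}$), and (ii) the Gronwall/truncation constants coming from the (\ref{N21})-style argument are independent of the truncation radius $N$, so that they survive taking $N\to\infty$ before the truncation level tends to infinity. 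Neither is addressed. The cleanest repair is to abandon (\ref{h1}) in favour of a $C^1$ radial cutoff, e.g. $A_l^N=\chi_N A_l+(1-\chi_N)A_l(N\vartheta)$ with $\chi_N$ a smooth bump, so that $DA_l^N$ stays continuous everywhere and the mollified derivatives converge locally uniformly with no exceptional set; then the continuity argument closes without probability estimates near $\partial B_N$, and the rest of your exit-time machinery is untouched.
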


\section{Regularity of the solution flow}
\label{regularity}
Theorem 4.5.1 in \cite{KU} states that for SDE (\ref{e1}), with $A_l$  global Lipschitz continuous, 
there is a solution flow $\xi_t(x,\omega)$ such that for almost surely all  $\omega$ 
and every $t>0$, $\xi_t(\cdot,\omega) \in C^{0,\delta}(\R^n)$($0<\delta<1$).
See \cite{Li1,Fang,Zh1} for various generalisation. 
To our knowledge the following result on solution with Sobolev regularity is new.
\begin{thm}\label{th:regularity}
Assume Assumption \ref{N1}  and 
condition (\ref{N17}) hold. There is a global solution flow $\xi_{t}(x,\omega)$ for SDE (\ref{e1}), \i.e. a version such that  for almost surely all  $\omega$,  
$\xi_{\cdot}(\cdot,\omega)$ is continuous in $[0,\infty)\times \R^n$. Furthermore  for each $p>0$ , there is a constant $T_5(K,\alpha, M,\theta,n,p, T_0,\tilde{T}, \sigma)$, such that 
$\xi_t(\cdot, \omega) \in W^{1,p}_{\loc}(\R^n)$ for each $0<t\leqslant T_5$.
\end{thm}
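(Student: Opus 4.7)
The plan is to obtain the continuous flow by applying Kolmogorov's continuity criterion in the joint variable $(t,x)$ to the smooth approximations $\xi_t^\varepsilon$ and then passing to the limit, and separately to identify $V_t$ as the $L^p_{\loc}$ weak derivative of $x\mapsto \xi_t(x,\omega)$ by letting $\varepsilon\to 0$ in the integration by parts formula at the level of the smooth flows.

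First I would establish the spatial moment bound. Since each $\xi_t^\varepsilon(\cdot,\omega)$ is almost surely $C^1$ with derivative $V_t^\varepsilon$, the fundamental theorem of calculus gives
\[
\xi_t^\varepsilon(y)-\xi_t^\varepsilon(x)=\int_0^1 V_t^\varepsilon\!\big(x+s(y-x)\big)(y-x)\,ds.
\]
For $p\geqslant 1$, Jensen's inequality together with Lemma \ref{Lem N4} applied to the (bounded) convex hull of $S$ produces a constant $C_S$ independent of $\varepsilon$ such that
\[
\sup_{\varepsilon<\varepsilon_0}\E\sup_{0\leqslant t\leqslant T_3}\bigl|\xi_t^\varepsilon(y)-\xi_t^\varepsilon(x)\bigr|^p\leqslant C_S\,|y-x|^p,\qquad x,y\in S,
\]
where $T_3=T_3(p)$ is as in Lemma \ref{Lem N4}. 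Passing $\varepsilon\to 0$ by Lemma \ref{N2} and Fatou yields the same estimate for $\xi_t$. The temporal estimate $\E|\xi_t(x)-\xi_s(x)|^p\leqslant C|t-s|^{p/2}$ is immediate from the uniform boundedness of the $A_l$ and the Burkholder--Davis--Gundy inequality. Combining the two, for $p>2(n+1)$ Kolmogorov's theorem supplies a modification of $\xi_t(x)$ jointly continuous on $[0,T_3]\times\bar S$. Running this on an exhausting sequence of bounded sets and using the pathwise uniqueness furnished by Assumption \ref{N1}(3) to reconcile the modifications gives a version continuous on $[0,T_3]\times\R^n$. The Markov property at the deterministic times $kT_3$, together with the joint continuity of $\xi_{T_3}(\cdot)$, extends this to a continuous flow on $[0,\infty)\times\R^n$.

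For the Sobolev regularity, fix $p$, set $T_5=T_4(p)$ from Theorem \ref{Th2}, let $t\in(0,T_5]$, and let $\varphi\in C_c^\infty(\R^n)$ have support in a bounded $S$. Since $\xi_t^\varepsilon$ is smooth in $x$ with $\partial_i\xi_t^\varepsilon=V_t^\varepsilon e_i$, integration by parts gives
\[
\int_{\R^n}\xi_t^\varepsilon(x,\omega)\,\partial_i\varphi(x)\,dx=-\int_{\R^n}V_t^\varepsilon(x,\omega)e_i\,\varphi(x)\,dx\quad\text{a.s.}
\]
By Lemma \ref{N2} and Theorem \ref{Th2}, $\xi_t^\varepsilon\to \xi_t$ and $V_t^\varepsilon\to V_t$ in $L^p(\Omega)$ uniformly for $x\in S$; Fubini then gives $L^1(\Omega\times S)$ convergence, so along a subsequence one has almost sure convergence of both sides of the identity above. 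Choosing a countable dense family of test functions $\varphi_k$ and intersecting the corresponding null sets, one concludes that for almost every $\omega$ the identity holds for all $\varphi\in C_c^\infty(\R^n)$, so $V_t(\cdot,\omega)$ is the distributional derivative of $\xi_t(\cdot,\omega)$. The uniform moment bound of Lemma \ref{Lem N4}, passed to the limit, yields $V_t(\cdot,\omega)\in L^p_{\loc}(\R^n)$ for almost every $\omega$, hence $\xi_t(\cdot,\omega)\in W^{1,p}_{\loc}(\R^n)$.

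The main obstacle is the uniform-in-$\varepsilon$ spatial moment bound: we need $\E\sup_t|V_t^\varepsilon(z)|^p$ to be controlled uniformly for $z$ along the segment $[x,y]$, and Lemma \ref{Lem N4} furnishes exactly this, but only on the short interval $[0,T_3(p)]$ whose length degrades as $p$ grows. This is what forces $T_5$ to depend on $p$ and confines the $W^{1,p}$ statement to small times. A secondary technical point is the exchange between the $\omega$-null exceptional set and the spatial integration in the Sobolev identification, which is handled cleanly by Fubini on $\Omega\times S$ and a countable dense family of test functions.
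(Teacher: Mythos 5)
Your proposal is correct and follows essentially the same route as the paper: the spatial moment estimate $\E\sup_{s\le T}|\xi^\varepsilon_s(x)-\xi^\varepsilon_s(y)|^p\le C|x-y|^p$ via the uniform bound on $V^\varepsilon$ from Lemma \ref{Lem N4}, Kolmogorov's criterion combined with the flow identity $\xi_t=\xi_{t-\hat T}(\xi_{\hat T},\theta_{\hat T}\omega)$ to extend continuity to all times, and passage to the limit in the smooth integration by parts identity $\int\xi^\varepsilon_t\,\partial_r\varphi=-\int V^{\varepsilon,r}_t\varphi$ (the paper extracts the a.e.\ convergent subsequence by a diagonal argument where you use Fubini on $\Omega\times S$, which is the same mechanism). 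Your choice $p>2(n+1)$ for the Kolmogorov step is in fact more careful than the paper's stated $p>n$.
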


\begin{proof}

From the analysis in the proof of Theorem 4.1 in \cite{Li1} for SDE (\ref{e2}) with smooth coefficients, 
given a bounded set $S$ in $\R^n$, we have for each $x,y \in S$ and $T>0,p\geqslant 1$, 
\begin{equation*}
\E\sup_{0\leqslant s \leqslant T}|\xi_s^{\varepsilon}(x)-\xi_s^{\varepsilon}(y)|^p
\leqslant |x-y|^p\sup_{z \in S} \E\sup_{0\leqslant s \leqslant T}|V_s^{\varepsilon}(z)|^p
\end{equation*}
By Lemma \ref{N2} and Lemma \ref{Lem N4}, let $\varepsilon$ tend to 0, there exists a $\hat{T}>0$
which only depends on $K,\alpha, M,\theta,n,\tilde{T},T_0,p,\sigma$, such that,
\begin{equation*}
\E\sup_{0\leqslant s \leqslant \hat{T}}|\xi_s^{\varepsilon}(x)-\xi_s^{\varepsilon}(y)|^p
\leqslant C |x-y|^p 
\end{equation*}
and from that one note that
$$\E|\xi_t(x)-\xi_s(y)|^p
\leqslant C \big(|x-y|^p+|t-s|^{\frac{p}{2}}\big) \ \  \ 0\leqslant t,s, \leqslant \hat{T},\ x,y \in S$$
So in above estimate, we take $p>n$, then by the Kolmogorov's criterion, there
is a version of the solution flow $\xi_t(x,\omega)$ for SDE (\ref{e1}), such that
$\xi_{.}(\cdot,\omega)$ is continuous in $[0,\hat{T}]\times \R^n$. As for $t>\hat{T}$, note that
by the uniqueness of the strong solution of SDE \ref{e1} under Assumption \ref{N1}, it is
satisfied that 
\begin{equation}\label{l5}
\xi_{t}(x,\omega)=\xi_{t-\hat{T}}(\xi_{\hat{T}}(x,\omega),\theta_{\hat{T}}(\omega)) \ \ a.s. 
\end{equation}
where $\theta_{\hat{T}}(\omega)_t=\omega_{t+\hat{T}}-\omega_{\hat{T}}$ is the shift operator 
and hence  the solution flow $\xi_{\cdot}(\cdot,\omega)$ is continuous in $[0,2\hat{T}]\times \R^n$ and hence  on $[0,\infty)\times \R^n$ by repeating the procedure.

By Lemma \ref{N2}, Theorem \ref{Th2} and the diagonal principle there exist a constant $T_5>0$, a subsequence $\varepsilon_k$
with $\lim_{k\rightarrow \infty} \varepsilon_k=0$ and a set
$\tilde{\Lambda}_1$ with $\mathbb{P}(\tilde{\Lambda}_1)=0$, such that if $\omega \in \tilde{\Lambda}_1^c $, for all $N>0$,
\begin{equation}\label{e36}
\lim_{k\rightarrow \infty}\int_{|x|\leqslant N}\sup_{0\leqslant t \leqslant T_5}
|V_t^{\varepsilon_k}(x,\omega)-V_t(x,\omega)|^pdx=0
\end{equation}
  and 
\begin{equation}\label{e37}
\lim_{k\rightarrow \infty}\int_{|x|\leqslant N}\sup_{0\leqslant t \leqslant T_5}
|\xi_t^{\varepsilon_k}(x,\omega)-\xi_t(x,\omega)|^p dx=0.
\end{equation}
Here $V_t(x),\ t>0$ is the solution of SDE (\ref{e4}) 
we get in the Theorem \ref{Th2}. 
 
Let $\{e_r\}$ be an o.n.b. of $\R^n$.  For simplicity write $V_t^{k,r}(x)=\langle V_t^{\varepsilon_k}(x),e_r
\rangle_{\R^n}$ and
$\xi_t^{k}(x)=\xi_t^{\varepsilon_k}(x)$. For the SDE (\ref{e2}) whose coefficients are smooth 
and with bounded derivatives, 
there is a smooth solution flow $\xi_t^k(\cdot,\omega)$ and $\frac{\partial \xi_t^k(\cdot,\omega)}
{\partial r}=V_t^{k,r}(x,\omega)$. Therefore there  exists  a null set $\Lambda_k$,  such that 
if $\omega \not  \in \Lambda_k$, the following integration by parts formula 
holds for $0\leqslant t \leqslant T_5$ and any $\varphi \in C_0^{\infty}(\R^n)$, 
\begin{equation}\label{e38}
\int_{\R^n}{\partial \varphi \over \partial x_ r}(x)  \xi_t^k(x,\omega)dx
=-\int_{\R^n}\varphi(x)V_t^{k,r}(x,\omega)dx.
\end{equation}
Let $\tilde{\Lambda}:=(\bigcup_{k=1}^{\infty}\Lambda_k)\cup\tilde{\Lambda}_1$, a null set.
Taking the limit $k\to \infty$ in the above identity and using (\ref{e36}-\ref{e37}) to see 
when $\omega \not  \in \tilde{\Lambda}$ and $0\leqslant t \leqslant T_5$, 
\begin{equation}\label{e39}
\int_{\R^n}  {\partial \varphi \over \partial x_r}(x)\xi_t(x,\omega)dx
=-\int_{\R^n}\varphi(x)V_t^{r}(x,\omega)dx  
\end{equation}
which means that $\xi_t(\cdot,\omega)$  is weak differentiable in 
distribution sense for  almost surely all $\omega$ and 
$\frac{\partial \xi_t(\cdot,\omega)}
{\partial x_r}=V_t^{r}(\cdot,\omega)$. Next we prove 
$\xi_t(\cdot,\omega) \in W^{1,p}_{\loc}(\R^n) $ for each $p>0$. For $N>0$, 
\begin{equation*}
\E\int_{B_N}\sup_{0\leqslant t \leqslant T_5}|V_t^r(x,\omega)|^pdx 
=\int_{B_N}\E\sup_{0\leqslant t \leqslant T_5}|V_t^r(x,\omega)|^pdx
\leqslant C\lambda(B_N) 
\end{equation*}
Hence $\int_{B_N}\sup_{0\leqslant t \leqslant T}|V_t^r(x,\omega)|^pdx $ is finite almost surely
and so we can find a null set $\Gamma_1$, such that 
$\int_{B_N}\sup_{0\leqslant t \leqslant T}|V_t^r(x,\omega)|^pdx <\infty$ for every 
$N>0$ when $\omega \notin \Gamma_1$. As the same way, we can prove the similar property for $\xi_t(x,\omega)$.
Hence $\xi_t(x,\omega), V_t^r(x,\omega) \in L^p_{\loc}(\R^n)$ for $\omega \not \in \Gamma\cup \tilde{\Lambda}$ where $\Gamma$
is a set with measure $0$, which means almost surely
$\xi_t(\cdot, \omega) \in W^{1,p}_{\loc}(\R^n)$ for each $0<t\leqslant T_5$. 

\end{proof}

For SDE with locally Lipschitz continuous coefficients, we may get rid of the boundedness and the uniform ellipticity condition.

\begin{thm}\label{Th4}
If  Assumption \ref{J1} holds and the coefficients are elliptic,  there is a 
 solution flow $\xi_t(x,\omega)$ to SDE (\ref{e1}) with the property that 
$(t,x)\mapsto \xi_t(\cdot,\omega)$  is continuous  for almost surely all  $\omega$ and 
$\xi_t(\cdot, \omega) \in W^{1,p}_{\loc}(\R^n)$ for each $t>0, p>0$.
\end{thm}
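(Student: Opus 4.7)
The plan is to follow essentially the same argument as in Theorem \ref{th:regularity}, with the key observation that Proposition \ref{J2} provides the approximation convergence on \emph{any} bounded time interval $[0,T]$ (not only a small one $[0,T_5]$), since under Assumption \ref{J1} the uniform moment estimate (\ref{g5}) for $V_t^\varepsilon$ holds for every $T>0$. Thus the restriction that appeared in Theorem \ref{th:regularity}, coming from the small-time moment bound Lemma \ref{Lem N4}, disappears entirely.

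First I would construct the continuous version of the flow. For the smoothly regularised SDE (\ref{e2}) and initial points $x,y$ in a bounded set $S$, one has the standard bound
\begin{equation*}
\E\sup_{0\leqslant s \leqslant T}|\xi_s^{\varepsilon}(x)-\xi_s^{\varepsilon}(y)|^p
\leqslant |x-y|^p \sup_{z \in S}\E\sup_{0\leqslant s \leqslant T}|V_s^{\varepsilon}(z)|^p.
\end{equation*}
Using Proposition \ref{J2} and the uniform estimate (\ref{g5}), letting $\varepsilon\to 0$ yields, for each $p\geqslant 1$ and each $T>0$, a constant $C=C(p,T,S)$ with
\begin{equation*}
\E\sup_{0\leqslant s \leqslant T}|\xi_s(x)-\xi_s(y)|^p\leqslant C|x-y|^p,
\end{equation*}
and combined with the standard time-increment estimate one obtains $\E|\xi_t(x)-\xi_s(y)|^p\leqslant C(|x-y|^p+|t-s|^{p/2})$ on $[0,T]\times S$. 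Kolmogorov's continuity criterion, applied with $p>n$, then produces a version of $\xi_{\cdot}(\cdot,\omega)$ that is jointly continuous on $[0,T]\times\R^n$. Letting $T$ run through the integers and using pathwise uniqueness under Assumption \ref{J1}, together with the shift relation (\ref{l5}), glues these versions into a continuous flow on $[0,\infty)\times \R^n$.

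Next I would upgrade to Sobolev regularity. Proposition \ref{J2} combined with Lemma \ref{Lem l0} and the diagonal trick from Theorem \ref{th:regularity} produces a subsequence $\varepsilon_k\to 0$ and a full-measure event $\tilde\Lambda_1^c$ such that for every $N>0$ and every $T>0$
\begin{equation*}
\lim_{k\to\infty}\int_{|x|\leqslant N}\sup_{0\leqslant t \leqslant T}\bigl(|\xi_t^{\varepsilon_k}(x,\omega)-\xi_t(x,\omega)|^p+|V_t^{\varepsilon_k}(x,\omega)-V_t(x,\omega)|^p\bigr)\,dx=0.
\end{equation*}
For each smooth $\xi_t^{\varepsilon_k}$ the integration by parts formula (\ref{e38}) holds off a null set $\Lambda_k$; passing to the limit in $k$ on $\tilde\Lambda_1^c\setminus\bigcup_k\Lambda_k$ gives (\ref{e39}) for every test function $\varphi\in C_0^\infty(\R^n)$ and every $t>0$. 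Hence $\xi_t(\cdot,\omega)$ admits a weak derivative equal to $V_t^{r}(\cdot,\omega)$, and local $L^p$ integrability of both $\xi_t(\cdot,\omega)$ and $V_t(\cdot,\omega)$ follows from Fubini applied to the uniform-in-$S$ moment bounds on $\xi_t$ and $V_t$ that Proposition \ref{J2} delivers on every $[0,T]$. This gives $\xi_t(\cdot,\omega)\in W^{1,p}_{\loc}(\R^n)$ for every $t>0$ and every $p>0$.

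The main obstacle I anticipate is simply tracking that every quantitative estimate used in Theorem \ref{th:regularity} survives without the small-time constraint. This is guaranteed by the fact that under ellipticity plus Assumption \ref{J1} the bounds (\ref{g5}) and (\ref{g8}) are valid on any $[0,T]$ and that the ellipticity assumption was required in Theorem \ref{th:regularity} only to handle the terms $\p(\xi_s^\varepsilon(x)\in U(R,\zeta))$ via Markov kernel bounds; here the ellipticity hypothesis is explicitly assumed, so exactly the same absolute continuity argument as in the remark after Lemma \ref{Lem l0} applies. No genuinely new estimate is needed beyond what Proposition \ref{J2} already supplies.
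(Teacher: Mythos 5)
The paper gives no explicit proof of Theorem \ref{Th4}; it is stated directly after Theorem \ref{th:regularity} with the clear intent that the argument carries over once the small-time moment bounds of Lemma \ref{Lem N4} are replaced by the $[0,T]$-global bounds of Proposition \ref{J2} (via (\ref{g5}) and (\ref{g8})) and Lemma \ref{N2} is replaced by Lemma \ref{Lem l0}. Your proposal reconstructs precisely this substitution: (g5) in place of Lemma \ref{Lem N4} for the Kolmogorov estimate, Lemma \ref{Lem l0} and Proposition \ref{J2} in place of Lemma \ref{N2} and Theorem \ref{Th2} for the diagonal-subsequence convergence (\ref{e36})--(\ref{e37}), and (g8) for local integrability of $V_t$. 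This is essentially the paper's approach, and your identification of exactly which hypothesis removed the small-time constraint is accurate.

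Two very minor points, neither a real gap: the Kolmogorov continuity estimate $\E|\xi_t(x)-\xi_s(y)|^p\le C(|x-y|^p+|t-s|^{p/2})$ on $[0,T]\times S$ requires $p>2(n+1)$ rather than just $p>n$ (the paper is also imprecise here, and taking $p$ large is harmless since you have it for all $p$); and if one has a jointly continuous version on every $[0,T]$ directly, the shift relation (\ref{l5}) is not actually needed to glue them --- consistency of the law plus pathwise uniqueness already identifies the versions a.s.\ on overlaps --- though invoking it does no harm.
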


\section{The differentiation formula }
\label{bel-formula}
The  uniqueness of a strong solution of SDE (\ref{e1}) leads to
 the Markov property of the solution $\xi_t(x)$, see  a proof in \cite{Oks} that can be easily followed under  Assumption \ref{N1}. For $f \in {\mathcal B}_b(\R^n)$ define $P_tf(x):=\E f(\xi_t(x))$ so $P_t$  is the associated Markov semigroup to (\ref{e1}). In this section a representation for $dP_t$
is given which leads to an integration by parts formula for the measure induced by the solution of the SDE (\ref{e1}). 
Let $\xi_t(x)$  be the solution flow of SDE (\ref{e1}) and
$V_t(x) \in L(\R^n,\R^n)$ the solution of (\ref{e3}) constructed in Theorem \ref{Th2}. Let
 $V_t(x,v_0)=\langle V_t(x),v_0\rangle_{\R^n}$ for $v_0 \in \R^n$ and $Y:\mathbf{R}^n\rightarrow L(\mathbf{R}^n,\R^m)$  the right inverse of map $A:\R^n\rightarrow L(\R^m,\R^n)$, where 
\begin{equation}\label{J3}
A(x)(a):=\sum_{l=1}^ m a_lA_l(x)  \qquad \text{for } a=(a_1,a_2,...,a_m) \in \R^m.                                                   \end{equation}

\begin{thm}\label{P1}
Suppose the Assumption \ref{N1} and condition (\ref{N17}) hold, 
then there is a constant
$T_{6}(K,\alpha, M,\theta,n,T_0,\tilde{T},\sigma)$, such that

\begin{equation}\label{e23}
d P_t f(x)(v_0)
=\frac{1}{t}\E \big[f(\xi_t(x))\int_{0}^{t}
\langle Y(\xi_s(x))(V_s(x,v_0)), dW_s\rangle_{\R^m} \big], \ \ v_0\in \R^n
\end{equation} 
for any $f$ in $ {\mathcal B}_b(\R^n)$ and $0<t\leqslant T_{6}$. If moreover 
$f \in C_b^1(\R^n)$, then
 $d(P_tf)(x)(v_0)=\E df(V_t(x,v_0)), $ for all $\ v_0 \in \R^n$ and such $t$. 
\end{thm}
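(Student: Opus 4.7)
The approach is to obtain the formula by passing to the limit in the mollified SDEs. For each $\varepsilon>0$, the SDE (\ref{e2-1}) has smooth bounded coefficients with bounded derivatives and, by Lemma \ref{lemma2}, is uniformly elliptic with a uniform constant. Set $Y^\varepsilon(x):=(A^\varepsilon)^*(x)\bigl(A^\varepsilon(x)(A^\varepsilon)^*(x)\bigr)^{-1}$, which is the right inverse of $A^\varepsilon(x)$. Since the coefficients of (\ref{e2-1}) are smooth, the classical Bismut--Elworthy--Li integration by parts formula gives
\begin{equation}\label{eq:smooth-bismut}
dP_t^\varepsilon f(x)(v_0)=\frac{1}{t}\E\Bigl[f(\xi_t^\varepsilon(x))\int_0^t\langle Y^\varepsilon(\xi_s^\varepsilon(x))(V_s^\varepsilon(x,v_0)),dW_s\rangle_{\R^m}\Bigr]
\end{equation}
for every $f\in\mathcal{B}_b(\R^n)$; for $f\in C_b^1$ the chain rule gives also $dP_t^\varepsilon f(x)(v_0)=\E\bigl[df(\xi_t^\varepsilon(x))(V_t^\varepsilon(x,v_0))\bigr]$. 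I set $T_6:=T_4$ (the constant from Theorem \ref{Th2}) and take $\varepsilon\to 0$ in (\ref{eq:smooth-bismut}).

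First, since $A_l^\varepsilon\to A_l$ uniformly on $\R^n$ (Lemma \ref{lemma1}) while $A^\varepsilon(A^\varepsilon)^*$ stays uniformly elliptic with uniformly bounded Hölder coefficients, the map $Y^\varepsilon$ is uniformly bounded in $\varepsilon$ and converges to $Y$ uniformly on $\R^n$; moreover $Y$ is uniformly Hölder continuous. Combining this with $\xi_s^\varepsilon(x)\to\xi_s(x)$ from Lemma \ref{N2} and the convergence $V_s^\varepsilon(x,v_0)\to V_s(x,v_0)$ in every $L^p$, uniformly over $x$ in bounded sets on $[0,T_6]$, provided by Theorem \ref{Th2}, together with the uniform moment bound Lemma \ref{Lem N4}, the triangle-inequality decomposition
$$Y^\varepsilon(\xi_s^\varepsilon)V_s^\varepsilon - Y(\xi_s)V_s = \bigl[Y^\varepsilon(\xi_s^\varepsilon)-Y(\xi_s^\varepsilon)\bigr]V_s^\varepsilon + Y(\xi_s^\varepsilon)\bigl[V_s^\varepsilon-V_s\bigr] + \bigl[Y(\xi_s^\varepsilon)-Y(\xi_s)\bigr]V_s$$
is controlled in $L^2([0,T_6]\times\Omega)$ uniformly in $x\in S$ for any bounded $S$, the Hölder continuity of $Y$ handling the third term via Lemma \ref{N2}. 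Itô's isometry then yields $L^2(\p)$-convergence of the stochastic integrals in (\ref{eq:smooth-bismut}), uniformly in $x\in S$; Cauchy--Schwarz combined with the boundedness of $f$ gives convergence of the right-hand side of (\ref{eq:smooth-bismut}) to the claimed expression, uniformly in $x\in S$.

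For the left-hand side, $P_t^\varepsilon f(x)\to P_tf(x)$ pointwise for continuous bounded $f$ by dominated convergence and Lemma \ref{N2}. The uniform-on-compacts convergence of the gradients $dP_t^\varepsilon f$ just established then forces $P_tf$ to be $C^1$ on each bounded set, with differential equal to the limit: this proves (\ref{e23}) for $f\in C_b(\R^n)$. The identity $dP_tf(x)(v_0)=\E\bigl[df(V_t(x,v_0))\bigr]$ for $f\in C_b^1$ follows by passing to the limit in the chain-rule form. To extend from $C_b$ to $\mathcal{B}_b$ one uses a monotone class argument: the right-hand side of (\ref{e23}) defines a linear functional of $f$ that is continuous under bounded pointwise convergence (by dominated convergence, since the stochastic integral lies in $L^2(\p)$ and $\xi_t(x)$ has an absolutely continuous law by Lemma \ref{Lem N2}); approximating a given $f\in\mathcal{B}_b$ by $f_n\in C_b$ pointwise-boundedly and integrating $dP_tf_n$ along a segment $\{x+sv_0\}$ and passing to the limit on both sides yields (\ref{e23}) for all $f\in\mathcal{B}_b$.

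The main obstacle is ensuring that the convergence of the integrand in the stochastic integral is \emph{uniform} in the initial point $x$: only this uniformity allows us to upgrade the pointwise convergence of $dP_t^\varepsilon f$ to an actual derivative identity for $P_tf$ and to carry out the final monotone class extension. This uniformity is exactly what Theorem \ref{Th2} and Lemma \ref{Lem N4} provide, and it is available only on $[0,T_4]$, which forces the restriction $t\le T_6$.
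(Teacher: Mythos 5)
Your proposal is correct and follows essentially the same route as the paper: apply the smooth Bismut--Elworthy--Li formula to the mollified SDE (\ref{e2}), then pass to the limit on $[0,T_4]$ using the convergences from Lemma \ref{N2}, Theorem \ref{Th2}, the moment bounds of Lemma \ref{Lem N4}, and the uniform convergence $Y^\varepsilon\to Y$ coming from uniform ellipticity and the uniform modulus of continuity of the coefficients. The only noteworthy difference is in the extension from $C_b^1$ to $\mathcal{B}_b$: the paper takes $f_N\in C_b^1$ with $f_N\to f$ in $L^4_{\loc}$ together with the heat-kernel upper bound of Lemma \ref{Lem N2} to get $\sup_{x\in S}\E|f_N(\xi_t(x))-f(\xi_t(x))|^4\to 0$, whereas you invoke a functional monotone class / bounded-pointwise-convergence argument plus integration of $dP_tf_n$ along a segment; both ultimately rest on the same heat-kernel bound and the absolute continuity of the law of $\xi_t(x)$, so they are interchangeable here.
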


\begin{proof} 
Take $f\in C^1_b(\R^n)$. Since the coefficient of SDE (\ref{e2}) is smooth and with
bounded derivatives, by the classical 
formula in \cite{ELi1}, we have
\begin{equation}\label{l8}
d \E f(\xi_t^{\varepsilon}(x))(v_0))=\E df(V_t^{\varepsilon}(x,v_0)), \ v_0 \in \R^n 
\end{equation}
and
\begin{equation}\label{e21}
d \E f(\xi_t^{\varepsilon}(x))(v_0)
=\frac{1}{t}\E \big[f(\xi_t^{\varepsilon}(x))\int_{0}^{t}\langle
 Y^{\varepsilon}(\xi_s^{\varepsilon}(x))
(V_s^{\varepsilon}(x,v_0)), dW_s\rangle_{R^m} \big]
\end{equation}
where $Y^{\varepsilon}:\R^n\rightarrow L(\R^n,\R^m)$ is the right inverse of map $A^{\varepsilon}:\R^n\rightarrow L(\R^m,\R^n)$. 
Since $f\in C^1_b(\R^n)$, by Lemma \ref{N2} and Theorem \ref{Th2}, there is a constant
$T_6>0$, such that for any bounded set $S$ in $\R^n$, 
\begin{equation}\label{l6}
\lim_{\varepsilon\rightarrow 0}\sup_{x \in S}
\E\sup_{0\leqslant s \leqslant T_6}|f(\xi_s^{\varepsilon}(x))-f(\xi_s(x))|^4=0
\end{equation}
and
\begin{equation}\label{l7}
\lim_{\varepsilon\rightarrow 0}\sup_{x \in S}\E\sup_{0\leqslant s \leqslant T_6}
|V_s^{\varepsilon}(x,v_0)-V_s(x,v_0)|^4=0 
\end{equation}

So by (\ref{l8}) and (\ref{l6}-\ref{l7}) we obtain that 
$P_t f$ is differentiable if $f\in C^1_b(\R^n)$ and 
$d(P_tf)(x)(v_0)=\E df(V_t(x,v_0)), \ v_0 \in \R^n$
for each $0\leqslant t \leqslant T_6$.

For  the square matrix $(A^\varepsilon)^{\ast}A^\varepsilon$($\ast$ here means 
the transpose of the matrix), related to $A^\varepsilon$, with entries $(a_{ij}^\varepsilon)$, we have
 $$\left ( (A^\varepsilon)^{\ast}A^\varepsilon\right )^{-1}\to \left ( A^{\ast}A\right )^{-1}$$ as the inverse map is smooth in the Lie group of $n\times n$ nonsingular matrices. Consequently
$Y^\varepsilon=(A^\varepsilon)^{\ast}\left ( (A^\varepsilon)^{\ast} A^\varepsilon\right )^{-1}\to Y$.
In fact let $A_{il}^{\varepsilon}$, $a_{ij}^{\varepsilon}$ be defined as above, 
then for any $\eta=(\eta_1,\eta_2,...,\eta_n) \in \R^n$,
$Y^{\varepsilon}(x)(\eta)=
(\zeta_1^{\varepsilon}(x),\zeta_2^{\varepsilon}(x)...,\zeta_m^{\varepsilon}(x))$ 
where $\zeta_l^{\varepsilon}(x)=\sum_{i,j=1}^nA_{il}^{\varepsilon}(x)
b_{ij}^{\varepsilon}(x)\eta_j$. Here
$$(b_{ij}^{\varepsilon}(x))=(A^\epsilon(x))^{-1}.$$
Since by Assumption \ref{N1}, for $\varepsilon$ small, $a_{ij}^\varepsilon$ have the same uniform elliptic
constants and $A_l^{\varepsilon}$ are uniformly bounded with $\varepsilon$, we obtain that
$b_{ij}^{\varepsilon}$ converges to $b_{ij}$ uniformly in  $\R^n$,
and $b_{ij}^{\varepsilon}$ are uniformly bounded with $\varepsilon$. 
Also note that the bounded, uniformly continuity of $A_l$ and the uniformly positive lower bounded 
of the determination of matrix $A^{\ast}A$ (defined as (\ref{J3})) in $\R^n$ implies that
$b_{ij}$ is uniformly continuous in $\R^n$. So by Lemma \ref{N2} we can show that for any $T>0$ and bounded set $S$ 
in $\R^n$, 
\begin{equation*}
\lim_{\varepsilon\rightarrow 0}\sup_{x \in S}\E\sup_{0\leqslant s \leqslant T}|b_{ij}^{\varepsilon}
(\xi_s^{\varepsilon}(x))-b_{ij}(\xi_s(x))|^4=0.
\end{equation*}
This together with the convergence (\ref{l7}) leads to
\begin{equation*}
\lim_{\varepsilon\rightarrow 0}\sup_{x \in S}\int_0^{t}
\E|Y^{\varepsilon}(\xi_s^{\varepsilon}(x))(V_s^{\varepsilon}(x,v_0))
-Y(\xi_s(x))(V_s(x,v_0))|^2 ds=0
\end{equation*}
for all $t \leqslant T_6$ and  $S\subset \R^n$ bounded. 
Then by (\ref{l6}), we see that,
\begin{equation}\label{e22}
\begin{split}
&\lim_{\varepsilon\rightarrow 0}\sup_{x \in S}\Big|\E \big[f(\xi_t^{\varepsilon}(x))\int_{0}^{t}
\langle Y^{\varepsilon}(\xi_s^{\varepsilon}(x))(V_s^{\varepsilon}(x,v_0)), dW_s\rangle_{\R^m} \big]\\
&\qquad  \quad -\E \big[f(\xi_t(x))\int_{0}^{t}\langle Y(\xi_s(x))(V_s(x,v_0)), dW_s\rangle_{\R^m} \big]\Big|=0.
\end{split}
\end{equation}
which implies the differentiation formula (\ref{e21}) holds for each 
$f \in C_b^1(\R^n)$.
For $f\in {\mathcal B}_b(\R^n)$. Let $f_N$ be a sequence in  $ C_b^1(\R^n)$ 
with $$ \lim_{N\rightarrow \infty}\int_{S}|f_N(x)-f(x)|^4 dx=0 $$
for all bounded set $S$ in $\R^n$. 
By the heat kernel upper bound  $p_t(x,y)\leqslant c_1t^{-n/2}e^{|x-y|^2 \over c_2 t}$ from  Lemma \ref{Lem N2}, 
$$\lim_{N\rightarrow \infty}\sup_{x \in S}\E|f_N(\xi_t(x))-f(\xi_t(x))|^4=0$$ 
also holds for each $0<t\leqslant T_6$ and this completes the proof.

\end{proof}

\begin{thm}\label{th-formula}
Suppose that Assumption \ref{J1} holds and 
\begin{equation}\label{e29}
\sum_{i,j=1}^n a_{ij}(x)\xi_i \xi_j\geqslant  {1\over\ \e^{\sigma \psi_1(|x|)}} |\xi|^2 .
\ \ \ \forall x \in \R^n,\ \xi=(\xi_1,...,\xi_n)\in \R^n.
\end{equation}
Here $\sigma$ is some constant and $\psi_1:\R^+ \rightarrow \R^+$ is the function
in Assumption \ref{J1}, then for  any $f\in {\mathfrak B}_b(\R^n)$ and $t>0$, 
 \begin{equation}\label{BEL-formula}
d P_t f(x)(v_0)
=\frac{1}{t}\E \big[f(\xi_t(x))\int_{0}^{t}
\langle Y(\xi_s(x))(V_s(x,v_0)), dW_s\rangle_{\R^m} \big], \ \ v_0\in \R^n
\end{equation} 
\end{thm}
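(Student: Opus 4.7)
The approach is to extend the smooth-approximation proof of Theorem~\ref{P1} to the present locally Lipschitz, non-uniformly elliptic setting by exploiting the Lyapunov control on $\xi_t(x)$ afforded by Assumption~\ref{J1}. The time horizon is no longer restricted because under Assumption~\ref{J1} all the approximation estimates from Section~\ref{lip} (in particular Proposition~\ref{J2} and Lemma~\ref{Lem l0}) are valid for every $T>0$ and every $p>0$.

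First I would work with the truncated-mollified coefficients $A_l^{N,\varepsilon}$ from Section~\ref{lip}, for which the SDE has smooth coefficients with globally bounded derivatives and, for $\varepsilon<\varepsilon_0$, is uniformly elliptic on the truncation region. Elworthy--Li's formula \cite{ELi1} then gives, for every $t>0$ and $f\in C_b^1(\R^n)$,
\[
d(P_t^{N,\varepsilon}f)(x)(v_0)=\frac{1}{t}\,\E\Bigl[f(\xi_t^{N,\varepsilon}(x))\int_0^t\langle Y^{N,\varepsilon}(\xi_s^{N,\varepsilon}(x))(V_s^{N,\varepsilon}(x,v_0)),\,dW_s\rangle_{\R^m}\Bigr].
\]
The plan is to pass first $\varepsilon\to 0$, then $N\to\infty$, and finally extend from $C_b^1$ to $\mathfrak B_b$ by approximation.

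The key step, and the main obstacle, is to secure a uniform $L^2$-bound
\[
\sup_{N,\varepsilon}\sup_{x\in S}\E\int_0^t|Y^{N,\varepsilon}(\xi_s^{N,\varepsilon}(x))(V_s^{N,\varepsilon}(x,v_0))|^2\,ds<\infty
\]
for any bounded $S\subset\R^n$. Here $Y^{N,\varepsilon}$ is no longer uniformly bounded; condition \eqref{e29} (for the approximating coefficients, which satisfy the same degeneracy rate with constants uniform in $N,\varepsilon$) combined with $|A^{N,\varepsilon}(x)|\le g_2(|x|+1)\le C(1+|x|)$ gives only $|Y^{N,\varepsilon}(x)|\le C\,e^{\sigma\psi_1(|x|+1)}(1+|x|)$. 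Splitting by H\"older then reduces the bound to (i) uniform arbitrary moments of $\sup_{s\le t}|V_s^{N,\varepsilon}(x,v_0)|$, available from Theorem~\ref{old-thm}(1) under Assumption~\ref{J1}(1); and (ii) uniform arbitrary exponential moments $\E\exp\bigl(q\,\psi_1(|\xi_s^{N,\varepsilon}(x)|)\bigr)$, available from Theorem~\ref{old-thm}(2) applied to $g=\psi_1(|\cdot|)$ using Assumption~\ref{J1}(3). This is precisely where the exponent $\sigma\psi_1(|x|)$ in \eqref{e29} has been matched to the Lyapunov structure of the system.

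With this uniform bound in hand, the passage to the limit is routine: Lemma~\ref{Lem l0} and Proposition~\ref{J2} give $\xi_s^{N,\varepsilon}\to\xi_s$ and $V_s^{N,\varepsilon}\to V_s$ in $L^p(\Omega;C([0,t]))$ on bounded $S$; local uniform convergence $A^{N,\varepsilon}\to A$ and the pointwise invertibility of $AA^\ast$ (from \eqref{e29}) yield local uniform convergence $Y^{N,\varepsilon}\to Y$; Vitali's theorem then gives $L^2([0,t]\times\Omega)$-convergence of the integrands, which combined with $f(\xi_t^{N,\varepsilon}(x))\to f(\xi_t(x))$ in $L^2$ passes the smooth Bismut--Elworthy--Li identity to the limit for $f\in C_b^1$. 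Finally, I would extend from $f\in C_b^1$ to $f\in\mathfrak B_b$ by mollification: for $f_n=\rho_n\ast f\in C_b^\infty$ with $|f_n|\le\|f\|_\infty$, the right-hand side with $f_n$ converges by dominated convergence using the $L^2$ integrability of the integrand; absolute continuity of the law of $\xi_t(x)$ (which follows from pointwise ellipticity under \eqref{e29} by standard Malliavin arguments) gives $P_tf_n(x)\to P_tf(x)$ pointwise; and the uniform-in-$x$ (on compacts) estimate on the right-hand side promotes this to differentiability of $P_tf$ with the stated formula.
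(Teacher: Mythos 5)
Your proposal is correct and follows essentially the same route as the paper: truncate and mollify the coefficients, apply the smooth Bismut--Elworthy--Li formula, use \eqref{e29} to bound $|Y^{N}(x)|\leqslant C\e^{\sigma\psi_1(|x|)}\psi_2(|x|)^k$, control this by the exponential Lyapunov moments of Theorem \ref{old-thm} under Assumption \ref{J1}(3) together with the exit-time estimates, pass to the limit, and finish with a density argument for $f\in\mathfrak B_b$. The only cosmetic difference is that the paper absorbs the $\varepsilon\to 0$ step into the previously established bounded/uniformly-elliptic/Lipschitz case (via Theorem \ref{Th3}) and justifies the final $\mathfrak B_b$ extension through the Gaussian kernel bound for the truncated (uniformly elliptic) process rather than a Malliavin argument, but these are interchangeable details.
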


\begin{proof}
In the proof the constants $C$ amy change in different lines and do not
depend on $N$. As the same approximation methods in the above Theorem 
and by Theorem \ref{Th3}, we can prove that if the coefficients
of SDE (\ref{e1}) are bounded, uniform elliptic and Lipschitz continuous, then the diffrentiation formula 
(\ref{BEL-formula}) holds for any  $f\in {\mathfrak B}_b(\R^n)$ and $t>0$. 
Let $f^N, A_l^N$ be the cut-off functions defined by (\ref{h1}) and that $\xi_t^N(x)$, $V_t^{N}(x)$ the solution of 
corresponding SDE.  So by the analysis above, $P_t^N f(x)=\E(f(\xi_t^N(x)))$ is differentiable with $x$, and 
for any  $f\in {\mathfrak B}_b(\R^n)$ and $t>0$, 
\begin{equation}\label{e30}
d \E f(\xi_t^{N}(x))(v_0)
=\frac{1}{t}\E \big[f(\xi_t^{N}(x))\int_{0}^{t}\langle
 Y^{N}(\xi_s^{N}(x))
(V_s^{N}(x,v_0)), dW_s\rangle_{R^m} \big] 
\end{equation}
where $Y^N$ is a right inverse of $A^N$. 

By the elliptic condition  (\ref{e29}) and the 
expression of $Y$ we use in the proof of Theorem \ref{P1}, there are constants $C>0$, $k \in \mathbb{N}^+$, such that
$|Y(x)|\leqslant C\e^{\sigma \psi_1(|x|)}\big(\psi_2(|x|)\big)^k$, where $\psi_i, i=1,2$ are the functions
as that in Assumption \ref{J1}, and the same estimate also holds for $Y^N$. 
Let $T_N(x)$ be the first exit time of $\xi_t(x)$ from the ball $B_N$, then  
for each $T>0$, $p>0$,  
\begin{equation*}
\begin{split}
&\sup_{x \in S}\E\sup_{0\leqslant s \leqslant T}
|Y^N(\xi_s^{N}(x))-Y(\xi_s(x))|^p\\
&\leqslant C \sup_{x \in S}\E \Big[ \sup_{0\leqslant s \leqslant T}\big(\exp\{( \sigma p \psi_1( |\xi_s^{N}(x)|)\})
\big)
\big(\psi_2( |\xi^N_s(x)|)\big)^{kp}I_{T>T_N(x)}\Big]\\
&\leqslant C \sup_{x \in S} \E\Big[\sup_{0\leqslant s \leqslant T}
\Big(\exp\{( 2\sigma p \psi_1( |\xi_s^{N}(x)|)\}+\big(\psi_2(|\xi^N_s(x)|)\big)^{2kp}
\Big)I_{T>T_N(x)}\Big]
\end{split} 
\end{equation*}
By the analysis in Section 3 (see also Theorem 5.1 in \cite{Li1}), if Assumption
\ref{J1} holds, 
we have, 
\begin{equation}\label{e31}
\sup_N\sup_{x \in S} \E\Big[\sup_{0\leqslant s \leqslant T}
\Big(\exp\{( 2\sigma p \psi_1( |\xi_s^{N}(x)|)\}+\big(\psi_2(|\xi^N_s(x)|)\big)^{2kp}
\Big)^2\Big]<\infty 
\end{equation}
and
\begin{equation}\label{e34}
 \sup_{x \in S} \p(T>T_N(x))\leqslant 
\frac{\sup_{x \in S} \E\sup_{0\leqslant s \leqslant T}|\xi_s^N(x)|^2}{N^2}
\leqslant \frac{C}{N^2} 
\end{equation}
By (\ref{e31}) and (\ref{e34}), it follows that
\begin{equation}\label{e35}
\lim_{N\rightarrow \infty}
\sup_{x \in S}\E\sup_{0\leqslant s \leqslant T}
|Y^N(\xi_s^{N}(x))-Y(\xi_s(x))|^p=0 
\end{equation}

Also note that from the analysis of Section 3, if Assumption
\ref{J1} holds, then, 
\begin{equation} \label{e32}
\lim_{N\rightarrow \infty}\sup_{x \in S}\E\sup_{0\leqslant s \leqslant T}\left( |\xi_s^{N}(x)-\xi_s(x)|^p+|V_s^{N}(x)-V_s(x)|^p\right)=0
\end{equation}

By (\ref{e35}) and (\ref{e32}), (\ref{BEL-formula}) holds for  each $f\in C_b^1(\R^n)$.
For $f\in {\mathfrak B}_b(\R^n)$, take an approximating sequence 
$f_{\varepsilon} \in C_b^1(\R^n)$, such that for any bounded set $S$ in $\R^n$, 
$$\lim_{\varepsilon\rightarrow 0}\int_S |f_{\varepsilon}(x)-f(x)|^pdx=0$$
and $||f_{\varepsilon}||_{L^{\infty}}\leqslant ||f||_{L^{\infty}}$. 
Note that $\xi_t^N(x)$ is the solution of a SDE with unformly elliptic, global Lipschitz continuous and bounded 
coefficients, so by the Markov kernel estimate, we have for each fixed $N$ and $t>0$, 
\begin{equation}\label{h2}
\lim_{\varepsilon\rightarrow 0}\sup_{x \in S}\E|f_{\varepsilon}
(\xi_t^N(x))-f(\xi_t^N(x))|^p=0
\end{equation}
And then
\begin{equation*}
\begin{split}
&\sup_{x \in S}\E|f_{\varepsilon}
(\xi_t(x))-f(\xi_t(x))|^p 
\leqslant C\big[\sup_{x \in S}\E|f_{\varepsilon}(\xi_t^N(x))-f_{\varepsilon}(\xi_t(x))|^p\\
&+\sup_{x \in S}\E|f(\xi_t^N(x))-f(\xi_t(x))|^p+\sup_{x \in S}\E|f_{\varepsilon}
(\xi_t^N(x))-f(\xi_t^N(x))|^p\big]\\
&\leqslant C\big[||f||_{L^{\infty}}\sup_{x \in S}\mathbf{P}(T>T_N(x))+
\sup_{x \in S}\E|f_{\varepsilon}(\xi_t^N(x))-f(\xi_t^N(x))|^p\big]
\end{split}
\end{equation*}
By (\ref{e34}) and (\ref{h2}), in above inequality first let $\varepsilon\rightarrow 0$, then
$N\rightarrow \infty$, we obtain,
\begin{equation}\label{h4} 
\lim_{\varepsilon\rightarrow 0}
\sup_{x \in S}\E|f_{\varepsilon}
(\xi_t(x))-f(\xi_t(x))|^p=0
\end{equation}
The proof is complete.     
\end{proof}

 \subsection{
Integration by parts formula}
 Let $H=L_0^{2,1}(\R^m)$ be the space of real valued function from $[0,T]$ to $\R^m$, starting from $0$ and
  with finite energy, which is also equipped with the usual Hilbert space structure.    Let $(\Omega, \F, P)$ be the standard
  Wiener space and $d$ the unbounded closed linear operator $ L^p(\Omega, \R) \to L^p(\Omega, L(H,\R)), \ p>1$ which agrees
  with the standard differentiation on $BC^1$ functions (see \cite{ELi3} and reference in that).  Let $\D^{1,p}$ be the domain of $d$, which is the closure of smooth cylindrical functions under the graph norm, and by tradiiton we denote the extension
by $T$.  

Let $C_{x}([0,T];\R^n):=\big\{\gamma:\ \gamma \in C([0,T],\R^n), \gamma(0)=x \big\}$ and
\begin{equation*}
\I:\Omega \rightarrow  C_{x}(\R^n)   \ \ \I(\omega)_t:=\xi_t(x,\omega) 
\end{equation*}
be the It\^o map, where $\xi_t(x,\omega)$ is the solution of SDE (\ref{e1}). 
It is standard result that  $\I_t:\Omega \rightarrow \R^n$  belong to the space of $D^{1,p}$ for all $p>1$ and $t$ if
the coefficients of SDE (\ref{e1}) are Lipschitz continuous (see \cite{Nua}). Furthermore, if the coefficients are 
smooth and with bounded derivatives, then by the results of Bismut, 
\begin{equation*}
T_{\omega}\I: H\rightarrow T_{\xi_{.}(x,\omega)}C_x([0,T];\R^n)  
\end{equation*}
the $H$ derivative for the It\^o map
$\I$ in the sense of Malliavin calculus exists in $L^p$ for each $p>1$, and for 
$v_t(\omega):=T_{\omega}\I_t(h)$,
\begin{equation*}
v_t(\omega)=V_t(x) \int_0^t V_s^{-1}(x) (A(\xi_s(x))(\dot h_s))ds. \ \ 0\leqslant t \leqslant T,\ \ h \in H
\end{equation*}
where $V_t(x) \in \R^{n\times n}$ is the derivative process satisfying (\ref{e4}), $V_t^{-1}$ is its inverse
and $A$ is defined as (\ref{J3}).
And by \cite{Nua}, $v_t(\omega):=T_{\omega}\I_t(h)$ also satisfies the following SDE 
\begin{equation}\label{l11}
v_t=0+ \sum_{l=1}^m \Big(\int_0^t DA_l(\xi_s(x))v_s dW_s^l+ \int_0^t A_l(\xi_s(x))\dot h_s ds\Big)
+\int_0^t DA_0(\xi_s(x))v_s ds
\end{equation}

   Define  $V^h(\xi_{\cdot})_t:=\langle V_t(x), h_t\rangle_{\R^n}$ and
\begin{equation}
\label{divergence}
\delta V^h_t(\xi_{\cdot}):=\int_0^t \langle Y(\xi_s(x))V^{\dot{h}}(\xi_{\cdot}(x))_s, dW_s\rangle _{\R^m} 
\end{equation}
where $\dot{h}$ means the derivative of $h$ with time and $Y$ is the right inverse
of $A$ defined in (\ref{J3}). By the approximation theorem we derive the following result.
 
\begin{thm}\label{ibp}
Suppose the Assumption \ref{N1} holds and there exist $\sigma >0$ and $T_0>0$, the 
condition (\ref{N17}) is satisfied, then there is a
constant $T_{8}>0$, such that for any $0\leqslant T\leqslant T_{8}$, we consider 
the path space $C_x([0,T]; \R^n)$, given a 
$h:[0,T]\times\Omega\rightarrow \R^n$, an adapted stochastic process with $h(\omega)\in L_0^{2,1}([0,T_8]; \R^m)$ a.s. 
and $\E(\int_0^{T_8}
|\dot{h}_s|^2ds)^{\frac{1+\beta}{2}}<\infty$ for some $\beta>0$, 
$$\E dF(V^h(\xi_{.}))=\E F(\xi_{.}(x))\delta V_{T}^h(\xi_{.}) $$
where $F$ is the $BC^1$ function on path space
$C_x([0,T]; \R^n)$.
\end{thm}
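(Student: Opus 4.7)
The plan is to reduce Theorem \ref{ibp} to the classical Bismut integration by parts formula on the smooth approximation (\ref{e2}), and then pass to the limit $\varepsilon \to 0$ using the convergence results of Lemma \ref{N2} and Theorem \ref{Th2}. For fixed $\varepsilon$, the It\^o map $\I^\varepsilon$ of the smooth SDE is Malliavin $H$-differentiable and its tangent process along a Cameron--Martin direction $k$ has the explicit form $v_t^\varepsilon = V_t^\varepsilon(x)\int_0^t (V_s^\varepsilon)^{-1}(x)\,A^\varepsilon(\xi_s^\varepsilon(x))(\dot k_s)\,ds$. Given the adapted process $h$ of the theorem I would introduce the auxiliary Cameron--Martin vector $k^\varepsilon$ defined by
\begin{equation*}
\dot k_s^{\varepsilon} := Y^\varepsilon(\xi_s^\varepsilon(x))\, V_s^\varepsilon(x)\,\dot h_s .
\end{equation*}
Using the right-inverse property $A^\varepsilon Y^\varepsilon = \mathrm{Id}_{\R^n}$, the integrand in the formula for $v^\varepsilon$ collapses to $V^\varepsilon_s \dot h_s$, and telescoping gives $T_\omega\I^\varepsilon(k^\varepsilon)_t = V_t^\varepsilon(x)\,h_t = V^{h,\varepsilon}_t(\xi^\varepsilon_\cdot)$. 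Adaptedness and square-integrability of $\dot k^\varepsilon$ follow automatically from the hypotheses on $h$ and the uniform bounds on $Y^\varepsilon$ and $V^\varepsilon$.

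With this identification, the classical Bismut formula applied to $F\in BC^1(C_x([0,T];\R^n))$ yields
\begin{equation*}
\E\, dF\bigl(V^{h,\varepsilon}(\xi^\varepsilon_\cdot)\bigr) \;=\; \E\, F(\xi_\cdot^\varepsilon(x))\int_0^{T}\langle \dot k_s^\varepsilon,\, dW_s\rangle_{\R^m} \;=\; \E\, F(\xi_\cdot^\varepsilon(x))\,\delta V^h_{T}(\xi^\varepsilon_\cdot),
\end{equation*}
where the last equality is just the approximate analogue of the definition (\ref{divergence}). This is the identity I would pass to the limit on a small time interval $[0,T_8]$ with $T_8\le T_4$.

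For the left hand side, boundedness of $F$ and $dF$ combined with $(\xi_\cdot^\varepsilon,V_\cdot^\varepsilon)\to(\xi_\cdot,V_\cdot)$ in $L^p$ on bounded sets (Lemma \ref{N2}, Theorem \ref{Th2}) give convergence by dominated convergence. For the right hand side the main task is to upgrade the pointwise convergence of the integrand of $\delta V^h$ to $L^{1+\beta'}$-convergence of the stochastic integral, for some $\beta'>0$ small. Via Burkholder--Davis--Gundy this reduces to showing
\begin{equation*}
\int_0^{T_8}\E\,\bigl|Y^\varepsilon(\xi_s^\varepsilon)V_s^\varepsilon\dot h_s - Y(\xi_s)V_s\dot h_s\bigr|^{1+\beta'}\,ds \;\to\; 0 .
\end{equation*}
I would split this via H\"older's inequality so that $\dot h$ is absorbed using the hypothesis $\E(\int_0^{T_8}|\dot h_s|^2 ds)^{(1+\beta)/2}<\infty$, while arbitrarily high moments of $V^\varepsilon$ are supplied uniformly in $\varepsilon$ by Lemma \ref{Lem N4}, and the convergence $Y^\varepsilon(\xi_s^\varepsilon)\to Y(\xi_s)$ in every $L^p$ follows, exactly as in the proof of Theorem \ref{P1}, from the uniform ellipticity of Lemma \ref{lemma2}, the uniform boundedness of $A_l^\varepsilon$ and the continuity of matrix inversion.

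The delicate point and the main obstacle is the simultaneous choice of H\"older exponents and of the horizon $T_8$: the factor $\dot h$ only has $(1+\beta)/2$-th moment of its energy, so the remaining factors must be paired at exponents whose conjugate sits within the uniform $L^p$ bound of Lemma \ref{Lem N4}. Since that lemma only provides uniform moments on an interval shrinking with $p$, one is forced to take $T_8 := \min(T_4,\sigma/(Cp^2))$ for a $p$ determined by $\beta$. Once the H\"older inequality is closed on $[0,T_8]$, all the convergences combine and passing to the limit $\varepsilon\to 0$ in the displayed identity yields the desired formula $\E\,dF(V^h(\xi_\cdot)) = \E\,F(\xi_\cdot(x))\,\delta V^h_{T}(\xi_\cdot)$.
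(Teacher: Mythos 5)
Your proposal is correct and follows the same overall architecture as the paper: establish the integration by parts identity for the smooth approximating SDE (\ref{e2}) and then pass to the limit $\varepsilon\to 0$ using Theorem \ref{Th2} together with the convergence of $Y^{\varepsilon}(\xi^{\varepsilon}_s)$ established in the proof of Theorem \ref{P1}. The one place where you genuinely diverge is the justification of the $\varepsilon$-level formula. You derive it directly in Malliavin-calculus terms: you build the explicit adapted Cameron--Martin direction $\dot k^{\varepsilon}_s=Y^{\varepsilon}(\xi^{\varepsilon}_s)V^{\varepsilon}_s\dot h_s$, use $A^{\varepsilon}Y^{\varepsilon}=\mathrm{Id}$ to collapse the variation-of-constants representation of $T_\omega\I^{\varepsilon}(k^{\varepsilon})$ to $V^{\varepsilon}_t h_t=V^{h,\varepsilon}_t$, and then invoke the duality between $d$ and the It\^o integral of the adapted shift. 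The paper instead cites \cite{ELi2}, where the path-space formula is deduced from the semigroup differentiation formula (the smooth analogue of Theorem \ref{P1}) and the Markov property; your route is more self-contained and makes transparent why the divergence term is exactly (\ref{divergence}), while the paper's route avoids having to verify the Wiener-space duality for the merely $(1+\beta)$-integrable adapted shift, since that verification is already packaged in \cite{ELi2} under precisely the moment hypothesis $\E(\int_0^{T}|\dot h_s|^2ds)^{(1+\beta)/2}<\infty$ assumed here. Two small imprecisions, neither fatal: square-integrability of $\dot k^{\varepsilon}$ does not follow from a ``uniform bound'' on $V^{\varepsilon}$ (only $Y^{\varepsilon}$ is uniformly bounded; for $V^{\varepsilon}$ you must use the uniform moment estimates of Lemma \ref{Lem N4}, which you do use later anyway); and on the left-hand side ``dominated convergence'' should really be a uniform-integrability argument, since $dF$ is bounded only as a linear functional and $\sup_t|V^{h,\varepsilon}_t|$ is unbounded --- but the H\"older pairing of $\sup_t|h_t|\le T^{1/2}(\int_0^T|\dot h|^2ds)^{1/2}$ against high moments of $V^{\varepsilon}-V$ that you set up for the right-hand side handles this as well. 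Your observation that $T_8$ must be taken small enough that Lemma \ref{Lem N4} supplies the conjugate moment exponent forced by $\beta$ correctly reflects why the theorem is stated only on a short time interval.
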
 

\begin{proof} When the coefficients of
SDE (\ref{e1}) are smooth, it was shown in \cite{ELi2} that the differentiation formula 
for $P_tf$  leads to an integration by parts formula. The theorem there was given for 
compact manifolds. However this results and its proof remain valid for non-comapct 
manifold  if the differentiation formula for $P_t f$ holds as the proof only involves the 
Markov  property. Since the  coefficients of the approximate SDE (\ref{e2}) are 
smooth, uniformly elliptic and with bounded derivatives,
$$\E dF(V^{h,\varepsilon}(\xi_{.}^{\varepsilon}(x)))=\E F(\xi_{.}^{\varepsilon}(x))\delta 
V_{T}^{h,\varepsilon}(\xi_{.}^{\varepsilon}), $$
for any $T>0$ where $V^{h,\varepsilon}(\xi_{.}^{\varepsilon}(x))=\langle V_t^{\varepsilon}(x),h_t\rangle_{\R^n}$ and 
$$\delta 
V_T^{h,\varepsilon}(\xi_{\cdot}^{\varepsilon})=
\int_0^T\langle Y^{\varepsilon}(\xi_s^{\varepsilon}(x))
V^{\dot{h},\varepsilon}(\xi_{\cdot}^{\varepsilon}(x))_s, dW_s\rangle_{\R^m}.$$
Now by Theorem \ref{Th2} and the analysis before for the convergence $Y^{\varepsilon}$, the proof in completed.
\end{proof}

\begin{remark}
As the same cut-off methods in Section \ref{lip}. suppose Assumption \ref{J1} holds, we can also prove the same results
in Section 4 for any time interval $[0,T], T>0$ 
\end{remark}

\section{Appendix: The Geometry of Regularization}
Let ${\mathcal L}$ be a smooth elliptic second order differential operator without zero order term with $a=(a_{ij})$ the matrix representation of its second order part. The non-singular symmetric  matrix $a$ has a square root which can be chosen to be locally Lipschitz continuous,  see Stroock-Varadhan \cite{Stroock-Varadhan} and the book of  Ikead-watanabe\cite{IW}. Hence ${\mathcal L}$ has a H\"ormander form representation ${\mathcal L}={1\over 2}L_{A_l}L_{A_l}+L_{Z}$ where $A_l, Z$ are vector fields. This representation is far from being unique. Each representation produces a stochastic flow and corresponding geometry. We investigate the geometry and the properties of the stochastic flows for the decomposition involving 
non-global Lipschitz continuous vector fields.

Give $M=\R^n$  and the Riemannian metric $(a_{ij})^{-1}$ induced by the family of vector fields $(A_1, \dots, A_m)$. We consider $\R^n$  as a trivial manifold with a non-trivial Riemannian structure.  Uniform ellipticity condition and boundedness of the diffusion coefficients implies that the induced Riemannian metric is quasi-isometric to the Euclidean metric. The ellipticity condition (\ref{e29}) and 
Assumption \ref{J1} would mean the new Riemannian metric is `weakly' quasi isometric with the Euclidean metric.
For simplicity, from now on in this seciotn, we assume  the coefficients of SDE (\ref{e1}), 
$A_l\in C_b^1(\R^n), 1\leqslant l \leqslant m$, $DA_l, 1\leqslant l \leqslant m$ and
$A_0$ is bounded and 
(global) Lipschitz continuous in $\R^n$. We can also obtain the results under more general condition by the cut-off
methods used in Section \ref{lip}.

For each $e\in \R^m$, define $A(x)(e)=\sum A_l(x)\langle e, e_i\rangle e_i$ where $\{e_i\}$ is an o.n.b. of $\R^m$. In the case when $A_l$ are smooth  and elliptic, this induces a smooth Riemannian metric on $\R^n$ as well as an affine connection which is adapted to the metric such that  $(\breve\nabla_\cdot X)(e)(x)=0$ for all $e\in [\ker X(x)]^\perp$.  See the analysis in  Elworthy-LeJan-Li \cite{ELL}. This leads to a smooth decomposition of $(\tilde \paral_t(x_t))^{-1} W_t$, where $\tilde\paral_t$ is the stochastic parallel translation along the paths of $\xi_{.}$ defined using $\breve\nabla$, into the sum of two independent  Brownian motions in $\R^m$, one of which is intrinsic to $\xi_{\cdot}$.

   In the non-smooth case we discuss a smooth approximation which preserves much of the properties of the connection, which leads to a non-smooth Riemannian geometry. We use
the approximation argument to prove a intrinsic integration by parts formula. Stronger regularity 
on $A_l$, than in sectios \ref{section2} and \ref{lip},  are required.

For $(A_1^\varepsilon, \dots, A_m^\varepsilon)$,  the smooth elliptic approximations of $(A_1, \dots, A_m)$
there are the affine connection  $\breve \nabla^\varepsilon $ and its adjoint connection $\hat \nabla^\varepsilon$
 given by
$$\breve \nabla^\varepsilon_vU =A^\varepsilon(x) D(Y^\varepsilon(x)U)(v), \qquad v\in T_xM, U\in \Gamma TM.$$
and $$\hat \nabla^\varepsilon_U V=\breve \nabla^\varepsilon_V U +[U,V], \ U,V\in \Gamma TM.$$
In components this reads
$$(\breve \nabla^\varepsilon_vU)_k(x_0)=(DU_k)_{x_0}(v)+\sum_{j=1}^n \langle A^\varepsilon(x_0) D(Y^\varepsilon(x_0)(v,e_j), e_k\rangle U_je_k$$ 
where $(e_j)$ is the standard basis of $\R^n$ and $U=(U_1,\dots, U_n)$ and 
$DY^{\varepsilon}: \R^n\rightarrow L(\R^n\times\R^n;\R^m)$, see \cite{ELL} and we follow the tradition there and call it the LW connection.  The last term  in the equation can be written as  $\Gamma_{ij}^{\varepsilon,k} v_iU_j e_k$ where $\{\Gamma_{ij}^{\varepsilon,k} , 1\leqslant i,j, k \leqslant n\}$ is a family of real valued smooth functions.
In particular this is the unique connection such that $(\breve\nabla^\varepsilon_v A)_{x_0}=0$ for all $v\in [ \ker A^\varepsilon(x_0)]^\perp$ and $x_0\in \R^n$.

Given a vector field along a continuous curve there is the stochastic covariant differentiation 
with a fixed connection defined for almost surely all paths, given by
$ \frac{\hat D^\varepsilon}{dt} V_t=\hat \paral_t^ \varepsilon{d\over dt}(\hat\paral_t^\varepsilon)^{-1}V_t$ where $\hat\paral_t^\varepsilon$ is the stochastic parallel translation using the connection $\hat \nabla$. 

 \begin{proposition}
 \label{proposition4.1}
Assume the SDE (\ref{e1}) is uniformly elliptic and $A_l\in C_b^1(\R^n)$ for  $l=1,\dots, m$. 
Suppose that $DA_l$, $l=1,\dots, m$ and $A_0$ are bounded and  (global) Lipschitz continuous in $\R^n$. Let
$\breve\paral_s^\varepsilon: \R^n\to \R^n$ and $\hat \paral_s^\varepsilon:\R^n\to \R^n$ be the stochastic parallel translations
with the connection  $\breve \nabla^\varepsilon $ and $\hat \nabla^\varepsilon$  
respectively. Then $\breve\paral_s^\varepsilon: \R^n\to \R^n$ and $\hat \paral_s^\varepsilon:\R^n\to \R^n$ converge
in $L^p$ for any $p\geqslant 1$ and the martingale part of anti-stochastic development map also converges in  $L^p$ to a  Brownian motion $\breve B_t$. Furthermore
the filtration of $\{\breve B_s: 0\leqslant s \leqslant t\}$ is the same  as that of $\{\xi_s, 0\leqslant s \leqslant t\}$.
\end{proposition}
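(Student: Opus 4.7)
The plan is to reduce the convergence to linear SDEs along the regularised flow $\xi_t^\varepsilon$, use uniform Lipschitz bounds on the Christoffel symbols of $\breve\nabla^\varepsilon$ and $\hat\nabla^\varepsilon$, and then identify the limiting martingale as an intrinsic Brownian motion. Under the standing assumptions (uniform ellipticity of $A$, $A_l\in C_b^1$ with $DA_l$ bounded and globally Lipschitz, $A_0$ bounded Lipschitz), the right inverse $Y^\varepsilon=(A^\varepsilon)^\ast\bigl(A^\varepsilon(A^\varepsilon)^\ast\bigr)^{-1}$ together with $DY^\varepsilon$ is bounded uniformly in $\varepsilon<\varepsilon_0$ and converges uniformly on $\R^n$ to $Y$, $DY$, since matrix inversion is smooth on positive definite matrices with a uniform lower eigenvalue bound. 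Consequently the connection coefficients $\Gamma^{\varepsilon,k}_{ij}$, and those of the adjoint $\hat\nabla^\varepsilon$, are uniformly bounded and Lipschitz in $x$ and converge uniformly as $\varepsilon\downarrow 0$.

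Next, I would view $\breve\paral_t^\varepsilon$ and $\hat\paral_t^\varepsilon$ as matrix-valued solutions of linear SDEs of the form
$$d\breve\paral_t^\varepsilon=-\Gamma^\varepsilon(\xi_t^\varepsilon)\bigl(\breve\paral_t^\varepsilon,\circ\, d\xi_t^\varepsilon\bigr),$$
driven by $(W,\xi^\varepsilon)$ and converted to It\^o form in the usual way. Applying the Burkholder-Davis-Gundy inequality to the difference $\breve\paral_t^\varepsilon-\breve\paral_t^{\varepsilon'}$, the integrand splits into a coefficient-differences piece (controlled via Lemma \ref{N2} together with the uniform convergence of $\Gamma^\varepsilon$) and a process-differences piece (closed by the Gronwall lemma), exactly as in the proof of Theorem \ref{th-derivative}. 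This gives $\{\breve\paral_t^\varepsilon\}$ and $\{\hat\paral_t^\varepsilon\}$ Cauchy in $L^p$ on every compact time interval for every $p\geqslant 1$, with limits satisfying the analogous SDEs along $\xi_t$ with limit coefficients.

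For the anti-development I set $\breve B_t^\varepsilon:=\int_0^t(\breve\paral_s^\varepsilon)^{-1}A^\varepsilon(\xi_s^\varepsilon)\,dW_s$. Because $\breve\nabla^\varepsilon$ is adapted to the metric $a_{ij}^\varepsilon$, $\breve\paral_s^\varepsilon$ is an isometry for $a^\varepsilon$, so the quadratic variation of $\breve B^\varepsilon$ is $tI_n$ and $\breve B^\varepsilon$ is an $n$-dimensional Brownian motion by L\'evy's criterion. The $L^p$ convergences of $\breve\paral^\varepsilon$, of its inverse (obtained by writing and estimating the SDE satisfied by $(\breve\paral^\varepsilon)^{-1}$), and of $A^\varepsilon(\xi_s^\varepsilon)$, combined with It\^o isometry and BDG, yield $\breve B_t^\varepsilon\to\breve B_t:=\int_0^t\breve\paral_s^{-1}A(\xi_s)\,dW_s$ in $L^p$. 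Since $L^2$ limits of continuous martingales preserve quadratic variation, L\'evy's criterion identifies $\breve B$ as a Brownian motion.

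The most delicate step is the filtration identity. The inclusion $\mathcal{F}^{\breve B}_t\subseteq\mathcal{F}^\xi_t$ is immediate from the definition of $\breve B$. For the converse, the key observation is that $A(\xi_s)\,dW_s=\breve\paral_s\,d\breve B_s$ as $\R^n$-valued martingale differentials, because $A(\xi_s)Y(\xi_s)A(\xi_s)=A(\xi_s)$, so only the component of $W$ in the orthogonal complement of $\ker A(\xi_s)$ enters the equation for $\xi$. Therefore $(\xi,\breve\paral)$ solves the closed system
$$d\xi_s=\breve\paral_s\,d\breve B_s+A_0(\xi_s)\,ds,\qquad d\breve\paral_s=-\Gamma(\xi_s)\bigl(\breve\paral_s,\circ\,d\xi_s\bigr),$$
driven by $\breve B$ alone. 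Under the present Lipschitz regularity this system has a unique strong solution, forcing $\xi$ and $\breve\paral$ to be $\mathcal{F}^{\breve B}$-adapted, which completes the equality of filtrations.
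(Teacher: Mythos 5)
Your proposal is correct and follows essentially the same route as the paper: write the parallel translations as linear SDEs along $\xi^\varepsilon$ with coefficients built from $A^\varepsilon DY^\varepsilon$, get uniform bounds and $L^p$-Cauchyness by the BDG/Gronwall scheme of Theorem \ref{th-derivative} and Lemma \ref{lemma3}, transfer the convergence to the anti-development via the isometry property, and identify the limit as a Brownian motion. Your filtration argument via the closed system for $(\xi,\breve\paral)$ driven by $\breve B$ is just the coordinate form of the paper's horizontal-lift equation on the orthonormal frame bundle, so the two proofs coincide in substance.
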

\begin{proof}
For any $v_0 \in \R^n$, let $\breve v^{\varepsilon}_t:=\breve\paral_s^\varepsilon(v_0)$ and 
$\hat v^{\varepsilon}_t:=\hat \paral_t^\varepsilon(v_0)$. Note that  the $k$-th component of such process satisfy 
$d\breve v_t^{\varepsilon,k}=-\Gamma_{i,j}^{\varepsilon,k}(\xi_t^{\varepsilon}) \breve v_t^{\varepsilon,j}
\circ d\xi_t^{\varepsilon,i}$ and $d\hat v_t^{\varepsilon,k}=-\Gamma_{j,i}^{\varepsilon,k}(\xi_t^{\varepsilon})  
\hat v_t^j\circ d\xi_t^{\varepsilon,i}$ 
respectively. For simplicity, we only prove the convergence of $\breve v_t$, and the same results can be proved 
for $\hat v_t$ as the same way. In fact, we have,  
\begin{equation}\label{A1}
d\breve v_t^\varepsilon=\sum_{l=1}^mG_l^\varepsilon(\xi_t^\varepsilon)(\breve v_t^\varepsilon)dW_t^l+G_0^\varepsilon(\xi_t^\varepsilon)(\breve v_t^\varepsilon)dt 
\end{equation}
where each $G_l^\varepsilon(x)$ for $l=1,\dots, m$, is a $m\times n$ matrix with the $(j,k)$ entry given by $\sum_{i=1}^nA_{il}^\varepsilon(x)\Gamma_{ij}^{\varepsilon,k}(x)$ and the drift term
$G_0$ is the sum of some items only involving $A_l$, $\Gamma_{ij}^{\varepsilon,k}(x)$ and their first
derivatives. Note that the Christoffel symbols are determined by $A^\varepsilon(DY^\varepsilon)$ (see the analysis in \cite{ELL}) and
$$DY^\varepsilon(v)
=D(A^\varepsilon)^{\ast}(v)
+(A^\varepsilon)^{\ast} D((A^\varepsilon)^{*}A^\varepsilon)^{-1}(v).$$

From the assumption of the proposition we see that $G_l^{\varepsilon}$, $l=1,\dots, m$, 
are bounded in $\R^n$,  uniformly in $\epsilon$ for $\varepsilon$ sufficiently small. 
Let $\breve v_t$ be the solution to the corresponding SDE (\ref{A1}) without the $\varepsilon$ term and some
items related to the second order derivatives of $A_l$ are bounded modifications of the almost sure derivative 
of $D^2 A_l$, so the first derivatives of $\Gamma_{ij}^k$ makes sense. 
Then for the linear SDE (\ref{A1}), by a proof analogous to that of Lemma \ref{lemma3}, we have for each $t>0, p\geqslant 1$, 
\begin{equation*}
\lim_{\varepsilon \rightarrow 0}\E\sup_{0\leqslant s \leqslant t}|\breve v_s^{\varepsilon}-\breve v_s|^p=0 
\end{equation*}



 
Let $\breve B_s^\varepsilon$ be the martingale part
of the stochastic anti-development map $\int_0^t  (\breve \paral_s^\varepsilon)^{-1}\circ d\xi_s^\varepsilon$.  
Note that the stochastic parallel translation  $\breve \paral_s^\varepsilon$ is an isometry hence 
by the convergence results for $\breve \paral_s^\varepsilon$, it is straight forward  to 
show that $\breve B_s^\varepsilon$ converges in $L^p$ as $\varepsilon$ tends to $0$.
Since for each $\varepsilon$, $\breve B_s^\varepsilon$ is a Brownnian motion (see \cite{ELL}), 
so the limit process $\breve B_s$ is also a Brownnian motion. Moreover if $\breve \paral_s^{-1}$ is the inverse  of $\breve \paral_s$, the limit process of $\breve \paral ^{\varepsilon}_s$, $\breve B_s$ is the 
martingale part of $\int_0 ^t \paral^{-1}_s d \xi_s$. 
The Brownian motion $\breve B_\cdot$  is clearly adapted to the filtration of $\xi_\cdot$.  For the opposite inclusion of filtrations, let 
$h^\varepsilon_u( A_\ell^\varepsilon)$ be the horizontal lift  of $A_l^\varepsilon$ at frame $u$ and respect to $\breve \nabla^\varepsilon$ in the orthonormal frame bundle. Then the horizontal lift of the path $\xi_t^\varepsilon$ 
starting from the initial frame $u_0$ satisfies:
$$d \tilde \xi_t^\varepsilon = \sum_{l=1}^m h_{\tilde \xi_t^\varepsilon}  A_l^\varepsilon 
(\xi_t^\varepsilon)\circ d\breve B^{\varepsilon,l}_t +h_{\tilde \xi_t^\varepsilon} A_0^\varepsilon(\xi_t^\varepsilon) dt.$$
Note that the horizontal lift $h^{\varepsilon}$ only depends on the Christoffel symbols
$\Gamma_{ij}^{\varepsilon,k}(x)$ and $A_l^{\varepsilon}$. So take $\varepsilon \to 0$ to see the above equation 
also holds without parameter $\varepsilon$ (The second order derivatives of $A_l$ are viewed
as bounded version of the weak derivatives). And it implies that  $\tilde \xi_{.}$, therefore $\xi_{.}$ is adapted to the filtration of $\sigma\{\breve B_s: \}$ from the stochastic differential equation which defines it.
\end{proof}

The stochastic processes $V^h$ defined in Section \ref{bel-formula}  are not intrinsic objects on the path space. And use
the conclusion of Proposition \ref{proposition4.1}, we show  that 
$\E \{ T\I_t(h)| \sigma(\xi_s(x), 0\leqslant s \leqslant T)\}$ is  an intrinsic object  where $T\I(h)$ is the 
Malliavin derivative of the It\^o map $\I$.
\begin{thm}
Suppose the same assumption in Proposition \ref{proposition4.1} holds.
Let  $\hat W_t:\R^n \rightarrow \R^n$ be the damped parallel translation which satisfies the following equation
\begin{equation}\label{l13}
\frac{\hat D}{dt}[\hat W_t(v_0)] =-{1\over 2} (\breve \Ric)^{\#}(W_t(v_0))dt+
\breve \nabla_{W_t(v_0)}^\varepsilon A_0dt \ \ W_0(v_0)=v_0
\end{equation}
where $\frac{\hat D}{dt}=\hat \paral_t\frac{d}{dt}\big((\hat \paral_t)^{-1}\big)$ and $\Ric: \R^n\times \R^n\to \R$ is the Ricci tensor with the connection
$\breve \nabla$ and $\Ric^{\#}$ is the corresponding linear map on defined by $\breve \paral$ $\R^n$
(The $\breve \paral$, $\hat \paral_t$ are the limit process we get in Proposition \ref{proposition4.1}).
Then we have,
\begin{equation}\label{l12}
\E \{ T\I_t(h)| \sigma(\xi_s, 0\leqslant s \leqslant T)\}(\xi_{.})=\hat W_t \int_0^t (\hat W_s)^{-1}
A(\xi_s)\dot{h}_s ds
\end{equation}
\end{thm}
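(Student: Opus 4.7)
My plan is to establish the identity first for the smooth approximations $\xi_\cdot^\varepsilon$ and then pass to the limit $\varepsilon\to 0$. In the smooth setting the coefficients $A_l^\varepsilon$ are $C^\infty$ and uniformly elliptic, so the LW connection $\breve\nabla^\varepsilon$ and its adjoint $\hat\nabla^\varepsilon$ are smooth, with a well-defined damped parallel translation $\hat W_t^\varepsilon$ solving the smooth analogue of (\ref{l13}). The classical filtering formula of Elworthy--Yor and Elworthy--LeJan--Li \cite{ELL} then gives
\begin{equation*}
\E\{V_t^\varepsilon(V_s^\varepsilon)^{-1}\mid \sigma(\xi_\cdot^\varepsilon)\}=\hat W_t^\varepsilon(\hat W_s^\varepsilon)^{-1},\qquad 0\le s\le t.
\end{equation*}
Combined with the Bismut--Elworthy decomposition $T\I_t^\varepsilon(h)=\int_0^t V_t^\varepsilon(V_s^\varepsilon)^{-1}A^\varepsilon(\xi_s^\varepsilon)\dot h_s\, ds$ and with the $\sigma(\xi_\cdot^\varepsilon)$-measurability of the integrand $A^\varepsilon(\xi_s^\varepsilon)\dot h_s$ (which holds when $h$ is deterministic or adapted to the filtration of $\xi_\cdot^\varepsilon$), this yields
\begin{equation*}
\E\{T\I_t^\varepsilon(h)\mid \sigma(\xi_\cdot^\varepsilon)\}=\hat W_t^\varepsilon\int_0^t(\hat W_s^\varepsilon)^{-1}A^\varepsilon(\xi_s^\varepsilon)\dot h_s\, ds,
\end{equation*}
which is exactly the $\varepsilon$-analogue of the target identity.

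I would next recast this identity in its equivalent weak form: for every bounded continuous $F$ on $C_x([0,T];\R^n)$,
\begin{equation*}
\E\bigl[T\I_t^\varepsilon(h)\,F(\xi_\cdot^\varepsilon)\bigr]=\E\Bigl[\hat W_t^\varepsilon\int_0^t(\hat W_s^\varepsilon)^{-1}A^\varepsilon(\xi_s^\varepsilon)\dot h_s\, ds\cdot F(\xi_\cdot^\varepsilon)\Bigr].
\end{equation*}
This reformulation sidesteps having to deal directly with $\varepsilon$-dependent $\sigma$-algebras when passing to the limit. The ingredients needed are: $\xi_\cdot^\varepsilon\to\xi_\cdot$ from Lemma \ref{N2}; $V_\cdot^\varepsilon\to V_\cdot$, hence $T\I_t^\varepsilon(h)\to T\I_t(h)$, from Theorem \ref{Th2}; $\hat\paral^\varepsilon\to\hat\paral$ from Proposition \ref{proposition4.1}; and $\hat W_t^\varepsilon\to \hat W_t$ in $L^p$, which I would obtain by a Gronwall argument on the linear ODE (\ref{l13}) using boundedness and $L^p$-convergence of the Ricci drift $\breve{\Ric}^{\varepsilon,\#}$ and of $\breve\nabla^\varepsilon A_0$. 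Uniform $L^p$-moment estimates then allow me to commute the limit with the products and time integrals, yielding the weak form of the desired identity for the limit objects. Since bounded continuous functionals form a determining class, this identifies $\hat W_t\int_0^t\hat W_s^{-1}A(\xi_s)\dot h_s\, ds$ with $\E\{T\I_t(h)\mid \sigma(\xi_s,0\le s\le T)\}$.

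The main obstacle sits in two pieces of the smooth step. The filtering formula itself requires writing $\hat\paral^{\varepsilon,-1}V_t^\varepsilon$ as an It\^o process, identifying its drift with the Ricci/$A_0$ term appearing in (\ref{l13}), and proving that the martingale part has zero conditional expectation given $\sigma(\xi_\cdot^\varepsilon)$. The crucial input is the LW property $\breve\nabla^\varepsilon_v A^\varepsilon=0$ for $v\in(\ker A^\varepsilon(x))^\perp$, which forces the stochastic integrand for this martingale to live in the orthogonal complement of the innovation Brownian motion $\breve B^\varepsilon$ from Proposition \ref{proposition4.1} that generates the filtration of $\xi^\varepsilon$. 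The second delicate point is the $L^p$-convergence $\hat W_t^\varepsilon\to \hat W_t$: this is where the stronger regularity hypothesis in force for Proposition \ref{proposition4.1} ($A_l\in C_b^1$, with $DA_l$ and $A_0$ bounded and Lipschitz) is exploited to keep the Ricci terms bounded uniformly in $\varepsilon$ and to pass them to the limit through convergence of the Christoffel symbols, which involve at most one derivative of the $A_l^\varepsilon$.
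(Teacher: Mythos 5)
Your proposal matches the paper's proof in all essentials: both establish the conditional-expectation (filtering) formula for the smooth approximations via Theorem 3.3.7 of \cite{ELL}, pass to the limit by testing against functionals of the path $\xi_\cdot$ to avoid the $\varepsilon$-dependent $\sigma$-algebras, and rely on $L^p$-convergence of $\hat W_t^{\varepsilon}$ and $(\hat W_t^{\varepsilon})^{-1}$ deduced from the convergence of the connection data in Proposition \ref{proposition4.1}. The one nuance you gloss over is that, since the adjoint connection is not metric, the paper first rewrites the covariant equation (\ref{l13}) in terms of $\breve\nabla$, so that it becomes a linear SDE with torsion terms rather than a pathwise linear ODE, before running the Section 2 convergence argument (your Gronwall step) on it.
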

\begin{proof}
From equation (\ref{l11}), by the boundedness condition we have, it can be proved as before as that
$T\I^{\varepsilon}_t(h)$ is a Cauchy sequence in $L^p(\p)$ for any $p>0$, where $\I^{\varepsilon}$ is the Ito 
map defined by SDE (\ref{e2}). By the closibility of Malliavin derivative, we derive that $\I$ is differentiable in 
the Malliavin calculus.
By Theorem 3.3.7 in \cite{ELL}, for SDE (\ref{e2}) with smooth coefficients,
$\E\{ T\I^\varepsilon_t(h)| \sigma\{\xi^{\varepsilon}_s: 0\leqslant s\leqslant T\}\}$ satisfies
the equation (\ref{l12}) where process $\hat W_t^{\varepsilon}$ is defined similarly by 
equation (\ref{l13}). Note that the parallel translation in the equation (\ref{l13}) is defined by 
the adjoint connection, which is in general not adapted with some metric, so $\hat \paral$ is not a 
isometry in general. But we can tranform $\frac{\hat D}{dt}$ in the equation to 
$\frac{\breve D}{dt}$ defined by original LW connection, and the pointwise ODE (\ref{l13}) will become a
linear SDE with some torsion terms. Also note that we have the following formula for the curvature 
tensor (see \cite{ELL}), 
\begin{equation} \label{l14}
R^\varepsilon(u,v)(w)=\sum_i \breve \nabla_u A_i^\varepsilon\langle \breve \nabla_vA_i^\varepsilon, w\rangle
+\sum_i \breve \nabla_v A_i^\varepsilon\langle \breve \nabla_uA_i^\varepsilon, w\rangle.
\end{equation}
So by the conclusion of Proposition \ref{proposition4.1} and  the methodology  in Section 2, we obtain that
$$\lim_{\varepsilon\rightarrow 0}\E\sup_{0\leqslant s \leqslant t}|\hat W_s^{\varepsilon}-\hat W_s|^p=0 $$
for any $p>1$. 
Since $W_t^{-1}$ also satisfies a linear SDE, but the driven Brownnian motion is with backward filtration, we derive
the $L^p$ convergence of $(\hat W_s^{\varepsilon})^{-1}$.
From that we know $\E\{ T\I^\varepsilon_t(h)| \sigma\{\xi^{\varepsilon}_s: 0\leqslant s\leqslant T\}\}$
converges to $\hat W_t \int_0^t (\hat W_s)^{-1}
A(\xi_s)\dot{h}_s ds$ in $L^p$ for any $p>1$. 

Note that for any $BC^1$ function $F$ on path space, we have
\begin{equation*}
\E\Big[T\I^\varepsilon_t(h) F(\xi^{\varepsilon}_{.}) \Big] =
\E\Big[\hat W_t^{\varepsilon}\left( \int_0^t (\hat W_s^{\varepsilon})^{-1}
A^{\varepsilon}(\xi^{\varepsilon}_s)\dot{h}_s ds\right)F(\xi^{\varepsilon}_{\cdot})\Big]
\end{equation*}
Let $\varepsilon$ tend to $0$, by the convergence results for $T\I^\varepsilon_t(h)$ 
we have,
\begin{equation*}
\E\Big[T\I_t(h) F(\xi_{.}) \Big] =
\E\Big[(\hat W_t \int_0^t (\hat W_s)^{-1}
A(\xi_s)\dot{h}_s ds)F(\xi_{.})\Big]
\end{equation*}
 which implies the conclusion (\ref{l12}).
\end{proof}

As the same approximaion argument, we can prove the following intrinsic integration by parts formula

\begin{thm}
We assume the  assumptions of Proposition \ref{proposition4.1}. Let $h:[0,1]\times\Omega\rightarrow \R^n$ be an adapted stochastic process with $h(\omega)\in L_0^{2,1}([0,1]; \R^n)$ for almost
surely all $\omega$ and $\E(\int_0^1
|\dot{h}_s|^2ds)^{\frac{1+\beta}{2}}<\infty$ for some $\beta>0$. Then
$$\E dF(\hat W_{.}h_{.})=\E \big[F(\xi_{.})\int_0^1 \langle \hat W_s \dot{h}_s, 
\breve \paral_s d \breve B_s\rangle\big]$$
for all $BC^1$ functions $F$ on path space.
\end{thm}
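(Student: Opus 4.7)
The plan is to establish the formula first for the smooth approximations $A_l^\varepsilon$ obtained by convolution and then pass to the limit $\varepsilon\to 0$ using the convergence machinery already in place (Theorem \ref{Th2}, Lemma \ref{Lem N4}, Proposition \ref{proposition4.1}, and the convergence of $\hat W^\varepsilon$ established in the preceding theorem). For each $\varepsilon$ the vector fields are smooth with bounded derivatives of all orders, the SDE is uniformly elliptic, and the Markovian machinery of Elworthy--LeJan--Li applies directly on $\R^n$ viewed as a manifold with the Riemannian metric $(a^\varepsilon)^{-1}$. In particular, the intrinsic integration by parts on $C_x([0,1];\R^n)$ for the smooth system reads
\begin{equation*}
\E\, dF\big(\hat W^\varepsilon_\cdot h_\cdot\big)=\E\Big[F\big(\xi^\varepsilon_\cdot\big)\int_0^1\big\langle \hat W^\varepsilon_s\dot h_s,\, \breve\paral^\varepsilon_s\, d\breve B^\varepsilon_s\big\rangle\Big],
\end{equation*}
which is the intrinsic reformulation (via the LW connection) of the extrinsic formula of Theorem \ref{ibp} applied to the smooth SDE (\ref{e2}). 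The ingredient used here is only the Markov property and the Bismut-type derivative formula for $P_t^\varepsilon$, both of which are classical in the smooth setting.

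Next I would pass to the limit in each side. For the left-hand side, the $BC^1$ assumption on $F$ gives both $\|F\|_\infty<\infty$ and $\sup_\gamma \|dF(\gamma)\|_\infty<\infty$ as an operator on paths, so it suffices to combine uniform continuity of $dF$ with the $L^p$-convergence $\hat W^\varepsilon\to \hat W$ established in the previous theorem and the $L^p$-convergence $\xi^\varepsilon\to\xi$ from Lemma \ref{N2}, together with a uniform moment estimate on $\hat W^\varepsilon h$ which follows from the boundedness of the drift of (\ref{l13}) under the assumptions of Proposition \ref{proposition4.1} and the integrability hypothesis on $h$. Dominated convergence then yields $\E\, dF(\hat W^\varepsilon_\cdot h_\cdot)\to \E\, dF(\hat W_\cdot h_\cdot)$.

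For the right-hand side, the key observation is that the stochastic integral
\begin{equation*}
\int_0^1\big\langle \hat W^\varepsilon_s\dot h_s,\, \breve\paral^\varepsilon_s\, d\breve B^\varepsilon_s\big\rangle
\end{equation*}
is controlled in $L^{1+\beta/2}$ by the Burkholder--Davis--Gundy inequality applied to the Brownian motion $\breve B^\varepsilon$ (an isometry $\breve\paral^\varepsilon$ does not affect norms), which bounds it by a constant times $\E\big(\int_0^1|\hat W_s^\varepsilon \dot h_s|^2 ds\big)^{(1+\beta)/2}$; using the uniform moment bounds on $\hat W^\varepsilon$ and the assumption $\E(\int_0^1|\dot h_s|^2ds)^{(1+\beta)/2}<\infty$ together with H\"older, one gets uniform integrability in $\varepsilon$. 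Combined with the $L^p$-convergences $\hat W^\varepsilon\to\hat W$, $\breve\paral^\varepsilon\to\breve\paral$ and $\breve B^\varepsilon\to\breve B$ from Proposition \ref{proposition4.1}, a standard approximation of stochastic integrals shows that the integral converges in probability to the corresponding limit, and since $F(\xi^\varepsilon_\cdot)\to F(\xi_\cdot)$ boundedly, uniform integrability upgrades this to $L^1$-convergence of the product, yielding the desired formula.

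The delicate step is the convergence of the stochastic integral: the integrand involves the process $\hat W^\varepsilon_s\dot h_s$ where $\dot h$ is only assumed to be adapted with the stated moment bound, and one must ensure that a single subsequence realises the almost-sure convergence of $\breve \paral^\varepsilon, \hat W^\varepsilon, \breve B^\varepsilon$ simultaneously so that the It\^o isometry can be applied to the difference. I would handle this by a diagonal extraction together with the uniform $L^{2+\eta}$ bound on $\hat W^\varepsilon\dot h$ (for some small $\eta>0$ provided by the $(1+\beta)$-moment of $h$), which removes the need for $L^2$-uniform convergence of the integrand and replaces it by convergence in probability plus uniform integrability of the stochastic integral. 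Once this passage to the limit is justified, the identity in the statement follows immediately.
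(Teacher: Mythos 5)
Your proposal is correct and follows essentially the same route the paper intends: the paper itself only says ``as the same approximation argument'' and relies on the smooth-case intrinsic formula of Elworthy--LeJan--Li for the $\varepsilon$-system together with the $L^p$-convergences of $\hat W^\varepsilon$, $\breve\paral^\varepsilon$, $\breve B^\varepsilon$ and $\xi^\varepsilon$ from Proposition \ref{proposition4.1} and the preceding theorem, exactly as you do. Your treatment of the uniform integrability of the stochastic integral via BDG and the $(1+\beta)$-moment assumption on $\dot h$ supplies detail the paper leaves implicit, but it is the same argument.
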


\end{document}